\definecolor{green}{rgb}{0,0.8,0} 
\definecolor{deepgreen}{cmyk}{1,0,1,0.5}
\newcommand{\Del}[1]{}
\numberwithin{equation}{section}
\newtheorem{theorem}{Theorem}[section]
\newtheorem{corollary}[theorem]{Corollary}
\newtheorem{lemma}[theorem]{Lemma}
\newtheorem{proposition}[theorem]{Proposition}
\newtheorem{remark}[theorem]{Remark}
\newcommand{\jap}[1]{\langle #1\rangle}
\renewcommand{\Im}{\mathrm{Im}}
\renewcommand{\hbar}{{\underline h}}
\newcommand{\bbN}{\mathbb N}
\newcommand{\bbR}{\mathbb R}
\newcommand{\bbZ}{\mathbb Z}
\newcommand{\calB}{\mathcal B}
\newcommand{\calD}{\mathcal D}
\newcommand{\calE}{\mathcal E}
\newcommand{\calF}{\mathcal F}
\newcommand{\calO}{\mathcal O}
\newcommand{\calR}{\mathcal R}
\newcommand{\calS}{\mathcal S}
\newcommand{\bR}{{\mathbb R}}
\newcommand{\ud}{\mathrm{d}}
\newcommand{\px}{\partial_x}
\newcommand{\pt}{\partial_t}
\DeclareMathOperator{\sech}{sech}
\begin{document}

\title[Decay and asymptotics for the 1D variable coefficient cubic NLKG]{Decay and asymptotics for the 1D Klein-Gordon equation with variable coefficient cubic nonlinearities}

\author[H. Lindblad]{Hans Lindblad}
\address{Department of Mathematics \\ Johns Hopkins University \\ Baltimore, MD 21218, USA}
\email{lindblad@math.jhu.edu}

\author[J. L\"uhrmann]{Jonas L\"uhrmann}
\address{Department of Mathematics \\ Texas A\&M University \\ College Station, TX 77843, USA}
\email{luhrmann@math.tamu.edu}

\author[A. Soffer]{Avy Soffer}
\address{Mathematics Department, Rutgers University, New Brunswick, NJ 08903, USA}
\email{soffer@math.rutgers.edu}

\thanks{H. Lindblad is supported in part by NSF grant DMS-1500925. A. Soffer is supported in part by NSF grant DMS-1600749 and NSFC11671163. Part of this work was conducted while the last two authors were visiting CCNU, Wuhan, China.
}

\begin{abstract}
 We obtain sharp decay estimates and asymptotics for small solutions to the one-dimensional Klein-Gordon equation with constant coefficient cubic and spatially localized, variable coefficient cubic nonlinearities. Vector-field techniques to deal with the long-range nature of the cubic nonlinearity become problematic in the presence of variable coefficients. We introduce a novel approach based on pointwise-in-time local decay estimates for the Klein-Gordon propagator to overcome this impasse.
\end{abstract}

\maketitle

\section{Introduction}

We consider the one-dimensional Klein-Gordon equation with constant and variable coefficient cubic nonlinearities
\begin{equation} \label{equ:nlkg}
 \left\{ \begin{aligned}
  (\partial_t^2 - \partial_x^2 + 1) u &= \beta_0 u^3 + \beta(x) u^3 \text{ on } \bbR^{1+1}, \\
  (u, \partial_t u)|_{t=1} &= (f,g),
 \end{aligned} \right.
\end{equation}
where $\beta_0 \in \bbR$ and $\beta(x)$ is a real-valued Schwartz function. For small, smooth initial data the global existence of solutions to~\eqref{equ:nlkg} follows readily from energy conservation. The goal of this paper is to establish dispersive decay of such solutions and to uncover their asymptotics. For technical convenience the initial data is prescribed at time $t = 1$ and we assume that $(f,g)$ are real-valued, smooth, and decaying. In particular, we do not make any compact support assumptions.

\medskip

Our interest in this problem is motivated by the asymptotic stability analysis of topological solitons arising in some equations of mathematical physics. A well-known example in one space dimension is the $\phi^4$ model
\begin{equation*}
 (\partial_t^2 - \partial_x^2) \phi = \phi - \phi^3,
\end{equation*}
which admits the stationary ``kink'' solution
\begin{equation*}
 \phi_K(x) = \tanh( {\textstyle \frac{x}{\sqrt{2}} } ).
\end{equation*}
The linearization $\phi = \phi_K + u$ about the kink gives rise to the following one-dimensional Klein-Gordon equation for the perturbation
\begin{equation} \label{equ:linearization_kink}
 \bigl( \partial_t^2 - \partial_x^2 + 2 - 3 \sech^2( {\textstyle \frac{x}{\sqrt{2}} }) \bigr) u = - 3 \tanh( {\textstyle \frac{x}{\sqrt{2}} } ) u^2 - u^3,
\end{equation}
whose nonlinearity consists of a \emph{variable coefficient} quadratic and a constant coefficient cubic term. The asymptotic stability analysis of the kink $\phi_K$ therefore requires to understand the convergence to zero of any small solution to nonlinear Klein-Gordon equations of the type~\eqref{equ:linearization_kink}, see~\cite{S06}. This problem comes with several major challenges. Due to the slow decay of Klein-Gordon waves in one space dimension the quadratic and cubic nonlinearities on the right-hand side of~\eqref{equ:linearization_kink} exhibit long-range effects that are significantly compounded and altered by the presence of variable coefficients. Additionally, the linearized operator on the left-hand side of~\eqref{equ:linearization_kink} possesses an internal mode of oscillation causing a discrepancy between the decay rates of different components of the solution.

The remarkable work of Kowalczyk-Martel-Mu\~{n}oz~\cite{KMM17} established the asymptotic stability of the kink with respect to a local energy norm under small, odd, finite energy perturbations, see also the review~\cite{KMM17_1} and references therein. For general perturbations the asymptotic stability of the kink remains an outstanding open problem. Moreover, \cite{KMM17} does not address the question of determining the precise asymptotic behavior of small perturbations, possibly with respect to a stronger topology. A key step in this direction is to uncover the dispersive decay and the asymptotics of small solutions to one-dimensional Klein-Gordon equations without a linear potential, but with constant as well as variable coefficient quadratic and cubic nonlinearities of the type
\begin{equation} \label{equ:model_kink}
 (\partial_t^2 - \partial_x^2 + 1) u = (\alpha_0 + \alpha(x)) u^2 + (\beta_0 + \beta(x)) u^3 \text{ on } \bbR^{1+1}.
\end{equation}
While~\eqref{equ:linearization_kink} only features a variable coefficient quadratic term, we note that variable coefficient cubic nonlinearities occur in the equations for perturbations of kinks in many other 1D nonlinear scalar field theories, for instance in the well-known sine-Gordon equation.

\medskip 

The study of the long-time behavior of small solutions to Klein-Gordon equations with constant coefficient nonlinearities has a long history, starting with the pioneering works of Klainerman~\cite{Kl80, Kl85} and Shatah~\cite{Sh85}. The rich and vast literature on this subject cannot be reviewed here in its entirety and our focus is on those works that are most relevant to this paper. 

In one space dimension the slow decay of free Klein-Gordon waves causes quadratic and cubic nonlinearities to exhibit long-range effects. One therefore cannot expect the respective nonlinear solutions to have the same asymptotics as linear solutions. Indeed, the seminal work of Delort~\cite{Del01, Del06} showed that the asymptotic behavior of small solutions to a one-dimensional Klein-Gordon equation with quadratic and cubic nonlinearities differs from the behavior of free Klein-Gordon waves by a logarithmic phase correction. Inspired by~\cite{Lind90, Lind92, LR03}, a different approach was later developed by Lindblad-Soffer~\cite{LS05_1, LS05_2} for the cubic case, providing a detailed asymptotic expansion of the solution for large times. Subsequently, Hayashi-Naumkin~\cite{HN08, HN12} removed the compact support assumptions about the initial data required in~\cite{Del01, LS05_1, LS05_2}, see also Stingo~\cite{Stingo18} and Candy-Lindblad~\cite{CL18}.

For related results on the asymptotic behavior of small solutions to the one-dimensional cubic nonlinear Schr\"odinger equation, we refer the reader to Hayashi-Naumkin~\cite{HN98}, Lindblad-Soffer~\cite{LS06}, Kato-Pusateri~\cite{KatPus11}, Ifrim-Tataru~\cite{IT15}, and references therein. 


In contrast, the study of one-dimensional nonlinear Klein-Gordon equations with variable coefficient nonlinearities has only recently been initiated by Lindblad-Soffer~\cite{LS15} and Sterbenz~\cite{Sterb16}. Specifically, \cite{LS15, Sterb16} establish dispersive decay of small solutions for smooth, compactly supported initial data in the case of a nonlinearity consisting of a spatially localized, variable coefficient cubic term coupled to constant coefficient quadratic and cubic terms, i.e. in the case of~\eqref{equ:model_kink} with $\alpha_0, \beta_0, \beta(x) \neq 0$ and $\alpha(x) = 0$. 

The presence of a variable coefficient nonlinearity poses significant challenges to current techniques of dealing with the long-range effects of (non-localized) constant coefficient quadratic and cubic nonlinearities in one space dimension. Oversimplifying a bit here, one generally uses a combination of vector-field and normal form methods together with an ODE argument that uncovers the asymptotic behavior of the solution. In the case of the Klein-Gordon equation, the only weighted vector-field that commutes with the linear flow is the Lorentz boost $Z = t \partial_x + x \partial_t$. However, differentiation of an ($x$-dependent) variable coefficient by Lorentz boosts produces strongly divergent factors of $t$, which seem to put suitable energy estimates out of reach. In~\cite{LS15, Sterb16} the main idea to overcome this issue is the introduction of a novel variable coefficient cubic normal form. It recasts the variable coefficient nonlinearity into a better form that allows one to obtain slowly growing energy estimates for one Lorentz boost of the solution.

Finally, we mention the recent works of Delort~\cite{Del16} and Chen-Pusateri~\cite{ChenPus19} on modified scattering for solutions to 1D Schr\"odinger equations with a linear potential and with constant as well as variable coefficient cubic nonlinearities, see also Germain-Pusateri-Rousset~\cite[p. 1479]{GermPusRou18}. In the case without a linear potential, which is the Schr\"odinger analogue of the Klein-Gordon equation~\eqref{equ:nlkg} considered here, \cite{Del16, ChenPus19} obtain decay and asymptotics of small solutions in the special case of odd initial data and even decaying variable coefficients.

\medskip 

In this work we continue the analysis of~\cite{LS15, Sterb16} in the case of the one-dimensional Klein-Gordon equation with constant as well as spatially localized, variable coefficient cubic nonlinearities, i.e. in the case of~\eqref{equ:model_kink} with $\beta_0, \beta(x) \neq 0$ and $\alpha_0 = \alpha(x) = 0$. We are now in the position to state our main results.
\begin{theorem} \label{thm:main}
 Let $N \geq 2$. There exists $\varepsilon_0 > 0$ such that if the initial data satisfy
 \begin{equation*}
  \bigl\| \jap{x}^{1+\frac{N}{2}} f \bigr\|_{H^{N+2}_x(\bbR)} + \bigl\| \jap{x}^{1+\frac{N}{2}} g \bigr\|_{H^{N+1}_x(\bbR)} \leq \varepsilon_0,
 \end{equation*}
 then the global solution to~\eqref{equ:nlkg} satisfies the following decay estimate in the exterior region $1 \leq t \leq \jap{x}$,
 \begin{equation} \label{equ:main_exterior_estimate}
  |u(t,x)| \lesssim \jap{x}^{-\frac{N}{2}} \Bigl( \bigl\| \jap{x}^{1+\frac{N}{2}} f \bigr\|_{H^{N+2}_x(\bbR)} + \bigl\| \jap{x}^{1+\frac{N}{2}} g \bigr\|_{H^{N+1}_x(\bbR)} \Bigr),
 \end{equation}
 while in the interior region $\jap{x} \leq t$ it holds that
 \begin{equation} \label{equ:main_interior_estimate}
  |u(t,x)| \lesssim \frac{1}{\sqrt{t}}  \Bigl( \bigl\| \jap{x}^{1+\frac{N}{2}} f \bigr\|_{H^{N+2}_x(\bbR)} + \bigl\| \jap{x}^{1+\frac{N}{2}} g \bigr\|_{H^{N+1}_x(\bbR)} \Bigr).
 \end{equation}
\end{theorem}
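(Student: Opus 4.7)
The plan is a bootstrap/continuity argument on a composite norm
\begin{equation*}
\|u\|_{X_T} := \sup_{1 \le t \le T}\Bigl( t^{-\delta}\|\jap{Z} u(t)\|_{H^{N+1}_x} + \sqrt{t}\,\|u(t)\|_{L^\infty_x} \Bigr) + \sup_{1 \le t \le \jap{x}} \jap{x}^{N/2}|u(t,x)|,
\end{equation*}
with $Z = t\partial_x + x\partial_t$ the Lorentz boost and $\delta > 0$ a small bootstrap parameter. Assuming $\|u\|_{X_T} \le 2\varepsilon_0$ on $[1,T]$, the objective is to improve this to $\|u\|_{X_T} \le \varepsilon_0$, after which a standard continuity argument extends the estimate to all times. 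In the exterior region $\jap{x} \ge t$ I would propagate the weighted Sobolev control $\|\jap{x}^{1+N/2}\partial_x^k u\|_{L^2_x}$ for $k \le N+1$, using that multiplication by $\jap{x}$ essentially commutes with the Klein-Gordon flow (modulo integrable error terms) and that $\beta$ is Schwartz, so the variable coefficient source $\beta(x) u^3$ is easily absorbed. A weighted Sobolev embedding then yields \eqref{equ:main_exterior_estimate}.

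For the interior region $\jap{x}\le t$, I would pass to hyperbolic coordinates $(\tau,\rho)$ with $t = \tau\cosh\rho$, $x = \tau\sinh\rho$, and decompose $u$ into a profile $a(\tau,\rho)$ via an ansatz of the form $u(t,x) \approx \tau^{-1/2}\mathrm{Re}\bigl(e^{i\tau}a(\tau,\rho)\bigr)$. The constant coefficient cubic $\beta_0 u^3$ is handled by the Lindblad-Soffer/Delort long-range analysis, which produces a transport-type ODE
\begin{equation*}
i \partial_\tau a(\tau,\rho) = \frac{3\beta_0}{8\tau}|a(\tau,\rho)|^2 a(\tau,\rho) + \mathcal{R}(\tau,\rho), \qquad \int_1^\infty \|\mathcal{R}(\tau,\cdot)\|_{L^\infty}\, d\tau < \infty,
\end{equation*}
preserving $|a|$ and giving the sharp $t^{-1/2}$ decay together with a logarithmic phase correction. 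The novel ingredient concerns the term $\beta(x) u^3$: standard vector-field techniques fail because $Z\beta(x) = t\beta'(x)$ grows in $t$. Instead I would invoke a pointwise-in-time local decay estimate of the form
\begin{equation*}
\bigl\| \jap{x}^{-\sigma} e^{\pm i(t-s)\jap{D}}\jap{x}^{-\sigma} h \bigr\|_{L^\infty_x} \lesssim (1+|t-s|)^{-3/2}\|h\|_{H^1_x}, \qquad \sigma > \tfrac{3}{2},
\end{equation*}
inserted into Duhamel's formula. Combined with the bootstrap bound $\|u(s)\|_{L^\infty_x}\lesssim \varepsilon_0 s^{-1/2}$, this produces a pointwise contribution from $\beta(x)u^3$ of order $\varepsilon_0^3 t^{-3/2}$, strictly better than the target $t^{-1/2}$ decay.

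The main obstacle will be the high-regularity energy estimate for $\|\jap{Z} u\|_{L^2_x}$ in presence of $\beta(x)u^3$, since commuting with $Z$ produces a source term $t\beta'(x) u^3$ with an apparent factor of $t$. Here the trade is between the $t$-growth and the fast decay of localized sources under the propagator: the pointwise local decay estimate exchanges the factor of $t$ for the $t^{-3/2}$-decay of the Duhamel output, so that the overall contribution remains time-integrable (up to the $t^\delta$ loss). A second, more delicate subtlety is to verify that $\beta(x)u^3$ contributes only a time-integrable forcing to the profile ODE for $a(\tau,\rho)$, so that the long-range phase correction dictated by $\beta_0$ is preserved; this reduces to an oscillatory integral estimate against Schwartz weights and follows from stationary phase combined with the same local decay bound.
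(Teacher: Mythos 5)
Your high-level architecture matches the paper's: a bootstrap combining an exterior weighted-Sobolev control, an interior $\sqrt{t}$-pointwise decay, and a slowly growing norm of $Zu$, with hyperbolic coordinates and an ODE argument producing the interior pointwise bound, and local decay estimates used to circumvent the $t$-divergence of $Z\beta(x)$. However, there is a substantive gap in your second paragraph. The local decay estimate you quote carries the spatial weight $\jap{x}^{-\sigma}$ on the \emph{output} side, so inserting it into Duhamel's formula for the component $u_1$ driven by $\beta(x)u^3$ controls only $\jap{x}^{-\sigma} u_1$; it does \emph{not} give a global pointwise bound $|u_1(t,x)| \lesssim \varepsilon_0^3 t^{-3/2}$ uniformly in $x$. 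The component $u_1$ fills the interior of the light cone and, away from the origin, decays no faster than the linear rate $t^{-1/2}$, so it cannot be peeled off as a faster-decaying remainder in the pointwise bootstrap as your second paragraph suggests.

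The correct use of the local decay estimate is the one your third paragraph gestures toward, but it must be made precise via the decomposition $u = u_0 + u_1$ with $(\Box + 1)u_0 = \beta_0 u^3$ and $(\Box + 1)u_1 = \beta(x)u^3$, which you should introduce explicitly. The dangerous term in the energy identity for $Zu_1$ is $\int t\,(\partial_x\beta)(x)\,u^3\,(\partial_t Zu_1)\,\ud x$, and since $\partial_x\beta$ is Schwartz it suffices to bound $\|\jap{x}^{-2}\partial_t Zu_1\|_{L^2_x}$. Expanding $\partial_t Zu_1 = t\,\partial_x\partial_t u_1 + \partial_x u_1 + x\,\partial_x^2 u_1 - x u_1 + x\beta(x)u^3$ and feeding the Duhamel form of each piece into the $L^2\to L^2$ local decay estimates of Lemmas~\ref{lem:weighted_decay}--\ref{lem:weighted_derivative_decay2} gives $\|\jap{x}^{-2}\partial_t Zu_1\|_{L^2_x}\lesssim\varepsilon^3 t^{-1/2}$; the $t^{-3/2}$ decay on $\jap{x}^{-2}\partial_x\partial_t u_1$ exactly compensates the prefactor $t$, so that $|\partial_t E(Zu_1)|\lesssim\varepsilon^6/\jap{t}$ and Gronwall yields the slow growth $\|Zu_1\|_{L^2_x}\lesssim\varepsilon\, t^\delta$. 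The uniform pointwise bound on $u$ (not just on $u_0$) then comes from the Littlewood-Paley split $w = P_{\leq\rho^\sigma}w + P_{>\rho^\sigma}w$ of $w = t^{1/2}u$ in hyperbolic coordinates and the ODE multiplier argument for the low-frequency piece, in which the variable coefficient $\beta(\rho\sinh y)$ enters with the extra smallness $\|P_{\sim\rho^\sigma}(\beta(\rho\sinh\cdot)\cosh^{-1}\cdot)\|_{L^\infty_y}\lesssim\rho^{-1+\sigma}$ provided by its spatial localization, rather than from a separate pointwise bound on $u_1$.
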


Theorem~\ref{thm:main} improves over the prior results~\cite{LS15, Sterb16} for~\eqref{equ:nlkg} by establishing a sharp decay estimate in terms of the time variable $t$ instead of the variable $\rho$ that defines a foliation of the interior of the light cone by hyperboloids $H_\rho := \{ (t,x) \in \bbR^{1+1} : t^2 - x^2 = \rho^2 \}$. Moreover, we do not require the initial data to be compactly supported and we go beyond the analysis in~\cite{LS15, Sterb16} by also uncovering the asymptotics of the solution to~\eqref{equ:nlkg}, see Corollary~\ref{cor:asymptotics} below. 

What we view as the main novelty of this work is the introduction of a new and robust method to deal with the difficulties of deriving slowly growing energy estimates of a Lorentz boost of the solution in the presence of a variable coefficient cubic nonlinearity. While~\cite{LS15, Sterb16} introduced a delicate type of variable coefficient cubic normal form for this purpose, our approach here avoids any use of normal forms and is instead based on pointwise-in-time local decay estimates for the free Klein-Gordon propagator of the type
\begin{equation} \label{equ:local_decay_example}
 \bigl\| \jap{x}^{-\sigma} e^{\pm i t \jap{\nabla}} \jap{x}^{-\sigma} \bigr\|_{L^2_x(\bbR) \to L^2_x(\bbR)} \lesssim \frac{1}{\jap{t}^{\frac{1}{2}}}, \quad \sigma \geq 1.
\end{equation}
See Subsection~\ref{subsec:main_proof_ideas} below for an outline of the main proof ideas. We expect that our method will allow for more progress in the variable coefficient quadratic case ($\alpha(x) \neq 0$) in~\eqref{equ:model_kink} and lead to a complete understanding of the long-range effects of the quadratic and cubic nonlinearities in~\eqref{equ:linearization_kink} in the presence of variable coefficients.
Pointwise-in-time local decay estimates of the form~\eqref{equ:local_decay_example}, or propagation estimates, for much larger classes of unitary operators $e^{-itH}$ originate in the works of Rauch~\cite{Rauch78}, Jensen-Kato~\cite{KJ79}, and Jensen~\cite{Jensen80, Jensen84} in the context of Schr\"odinger operators $H = -\Delta + V(x)$, see also~\cite{HSS99, JSS91, Ger08, GLS16, LarS15, Schl07} and references therein.

\begin{remark}
Our approach can easily be extended to also include a constant coefficient quadratic nonlinearity ($\alpha_0 \neq 0$) by incorporating a version of Shatah's normal form method~\cite{Sh85}. We do not pursue this here in order not to obfuscate the main novelty of our argument, namely the use of pointwise-in-time local decay estimates for the Klein-Gordon propagator in the derivation of energy estimates of a Lorentz boost of the solution in the presence of a spatially localized, variable coefficient cubic nonlinearity.
\end{remark}

\medskip 

The proof of Theorem~\ref{thm:main} also uncovers the asymptotic behavior of small solutions to~\eqref{equ:nlkg} in the interior region $\jap{x} \leq t$. It can be conveniently expressed in terms of hyperbolic coordinates $(\rho, y) \in [1, \infty) \times \bbR$ determined by $t = \rho \cosh(y)$ and $x = \rho \sinh(y)$.

\begin{corollary} \label{cor:asymptotics}
 Suppose that the assumptions of Theorem~\ref{thm:main} are in place. Then there exist a function $a \in L^\infty_y(\bbR) \cap L^2_y(\bbR)$ and a small constant $0 < \nu \ll 1$ such that in the interior region $\jap{x} \leq t$ the asymptotic behavior of the solution to~\eqref{equ:nlkg} is given by
 \begin{equation} \label{equ:main_asymptotics}
  u(t,x) = \frac{1}{\sqrt{t}} \, \Im \Bigl( e^{i ( \rho - \frac{3}{8} \beta_0 \frac{|a(y)|^2}{\cosh(y)} \log(\rho) )} a(y) \Bigr) + \frac{1}{\sqrt{t}} \calO( \rho^{-\nu})
 \end{equation}
 as $\rho = \sqrt{t^2-x^2} \to \infty$.
\end{corollary}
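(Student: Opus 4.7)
The plan is to derive a scalar ODE in hyperbolic time $\rho$ for an asymptotic profile along each ray $\{y = \mathrm{const}\}$, identify the resonant contribution of $\beta_0 u^3$ responsible for the logarithmic phase correction, and integrate the ODE to extract the modified asymptotics. Throughout I would use the auxiliary estimates established in the proof of \thm{thm:main}: energy-type bounds for $u$ and its Lorentz boost $Zu = (t\partial_x + x\partial_t)u$, the sharp pointwise bound $|u(t,x)| \lesssim t^{-1/2}$ in the interior, and the local decay estimate~\eqref{equ:local_decay_example}.

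Following the standard profile approach, set $u_+ := (\partial_t - i\jap{\nabla})u$ and introduce the Fourier-side profile $f(t,\xi) := e^{it\jap{\xi}}\widehat{u_+}(t,\xi)$. Applying stationary phase to the inverse Fourier representation of $u_+$ at the critical point $\xi_0 = \sinh(y)$ -- where $\jap{\xi_0} = \cosh(y)$, the phase $x\xi - t\jap{\xi}$ evaluates to $-\rho$, and the second derivative equals $-\rho/\cosh^2(y)$ -- and using the reconstruction $u = -\jap{\nabla}^{-1}\Im u_+$ produces a pointwise representation
\begin{equation*}
 u(t,x) = \frac{1}{\sqrt{t}}\, \Im\bigl(e^{i\rho}\, a(\rho, y)\bigr) + \calO\bigl(\rho^{-1/2 - \nu}\bigr),
\end{equation*}
in which $a(\rho, y)$ is a rescaled and complex-conjugated version of $f$ evaluated at $\xi = \sinh(y)$, up to a fixed constant phase. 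The remainder is controlled via integration by parts in $\xi$, using the weighted $L^2$ bound on $\partial_\xi f$ inherited from the $Zu$-energy estimate.

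Next, differentiating the profile in time gives $\partial_t f(t,\xi) = e^{it\jap{\xi}}\widehat{(\beta_0 u^3 + \beta(x)u^3)}(t,\xi)$. Substituting the representation of $u$ from the previous step into the cubic nonlinearity, expanding via $\sin^3(\theta) = \tfrac{3}{4}\sin(\theta) - \tfrac{1}{4}\sin(3\theta)$, and applying space stationary phase in $x$ at the critical point $x = t\tanh(y)$ isolates the resonant contribution of $\beta_0 u^3$ and translates, after unpacking the relation between $f$ and $a$, into the scalar ODE along each ray
\begin{equation*}
 \partial_\rho a(\rho, y) = -\frac{3i\beta_0}{8\rho\cosh(y)}\, |a(\rho,y)|^2\, a(\rho,y) + R(\rho, y).
\end{equation*}
The remainder $R$ collects three contributions: (i) the non-resonant pieces of $\beta_0 u^3$ carrying residual oscillation $e^{\pm 2 i\rho}$, which acquire an integrable factor $\rho^{-1-\nu}$ after one integration by parts in $\rho$; (ii) the stationary-phase errors from the cubic substitution, bounded using $\|Zu\|_{L^2}$ together with the interior decay of $u$; and most importantly (iii) the variable coefficient term $\beta(x)u^3$, for which the Schwartz decay of $\beta$ combined with $\|u(t)\|_{L^\infty_x} \lesssim t^{-1/2}$ and the local decay estimate~\eqref{equ:local_decay_example} yields $\|e^{it\jap{\nabla}}(\beta u^3)\|_{L^\infty_\xi} \lesssim t^{-1-\delta}$ for some $\delta > 0$, which is integrable in $\rho$.

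Since the principal coefficient in the ODE is purely imaginary, $\partial_\rho |a|^2 = 2\Re(\bar a R) = \calO(\rho^{-1-\nu})$, so $|a(\rho,y)|$ converges uniformly in $y$ (and in $L^2_y$) to some limit $|a_\infty(y)|$. Substituting this limit back into the ODE and integrating the phase yields
\begin{equation*}
 a(\rho, y) = a_\infty(y)\, \exp\!\Bigl(-i\, \tfrac{3\beta_0 |a_\infty(y)|^2}{8\cosh(y)}\, \log(\rho)\Bigr) + \calO\bigl(\rho^{-\nu}\bigr),
\end{equation*}
with $a_\infty \in L^\infty_y(\bbR) \cap L^2_y(\bbR)$ inherited from the pointwise and $L^2$ bounds on $f(t,\cdot)$. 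Inserting this into the representation from the first step yields the asymptotic formula~\eqref{equ:main_asymptotics} after renaming $a_\infty$ to $a$. The principal technical obstacle is step (iii): unlike the constant coefficient setting, $\beta(x)$ cannot be commuted through the linear flow or through the Lorentz boost $Z$ without generating unfavorable factors of $t$, and it is precisely here that the pointwise-in-time local decay estimate~\eqref{equ:local_decay_example} -- the key novel tool of the paper -- enters decisively to trade spatial localization of $\beta$ for temporal decay of $e^{it\jap{\nabla}}(\beta u^3)$.
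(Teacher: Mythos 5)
Your proposal takes a genuinely different route from the paper's. The paper works entirely in hyperbolic coordinates with the variable $w = t^{1/2}u$: it applies the $\rho$-dependent frequency cutoff $P_{\leq\rho^\sigma}$ to $w$, introduces $W_\pm = e^{\mp i\rho}(\partial_\rho(P_{\leq\rho^\sigma}w)\pm i(P_{\leq\rho^\sigma}w))$, derives a pointwise-in-$y$ ODE for $W_+$ in which the resonant cubic term $-\tfrac{3i\beta_0}{8\rho\cosh(y)}|W_+|^2W_+$ is extracted from the expansion of $(P_{\leq\rho^\sigma}w)^3$, and then multiplies by the integrating factor $\exp(+i\tfrac{3\beta_0}{8}\tfrac{b(y)^2}{\cosh(y)}\log\rho)$ to conclude; the variable coefficient $\beta(\rho\sinh(y))/\cosh(y)$ is dispatched directly by the spatial-localization bound~\eqref{equ:Pk_beta_variable_Linfty} (it decays like $\rho^{-1+\sigma}$ and hence enters only the integrable remainder). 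You instead work on the Fourier side in rectilinear coordinates with the profile $f(t,\xi)=e^{it\jap\xi}\widehat{u_+}(t,\xi)$ and stationary phase at $\xi_0=\sinh y$, in the spirit of Hayashi--Naumkin and Kato--Pusateri. This route avoids the commutator $[P_{\leq\rho^\sigma},\partial_\rho^2]w$ that the paper must control, at the cost of needing stationary-phase error estimates that rely on control of $\partial_\xi f$; the paper's slow-growth bound $\|Zu\|_{L^2}\lesssim\varepsilon\rho^\delta$ provides exactly that control (in $L^2$), so the scheme is viable.

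One attribution in your remainder analysis is off. You invoke the local decay estimate~\eqref{equ:local_decay_example} to get $\|e^{it\jap\nabla}(\beta u^3)\|_{L^\infty_\xi}\lesssim t^{-1-\delta}$ in step (iii), but since $|e^{it\jap\xi}|=1$, this quantity is just $\|\widehat{\beta u^3}\|_{L^\infty_\xi}\leq\|\beta u^3\|_{L^1_x}\lesssim\|\beta\|_{L^1}\|u\|_{L^\infty_x}^3\lesssim t^{-3/2}$, which requires nothing beyond the Schwartz decay of $\beta$ and the interior pointwise decay of $u$ — no local decay estimate at all. The local decay machinery is genuinely decisive, but upstream: it underlies the slow-growth energy estimate for $Zu_1$ (Lemma~\ref{lem:weighted_bounds_u1} and Proposition~\ref{prop:interior_energy_estimate}), which is precisely what gives you control of $\partial_\xi f$ and allows the stationary-phase error estimates in your first step to close. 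Rewiring that attribution would make your argument match what the paper actually proves; as written, your step (iii) oversells the role of~\eqref{equ:local_decay_example} in the asymptotics while understating its role in the energy bounds you are implicitly relying on.
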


\begin{remark}
While the constant coefficient cubic nonlinearity causes a logarithmic phase correction, the spatially localized, variable coefficient cubic nonlinearity in~\eqref{equ:nlkg} does not lead to a phase correction (in terms of $\rho$). However, we expect that the regularity and decay properties of the amplitude $a(y)$ are affected by the variable coefficient, which we plan to consider in a future investigation. 
\end{remark}

\subsection{Hyperbolic coordinates and equations}

 We foliate the interior region $\jap{x} \leq t$ in terms of hyperboloids
 \begin{equation*}
  H_\rho := \bigl\{ (t,x) \in [0,\infty) \times \bbR \, : \, \rho^2 = t^2 - x^2 \bigr\}, \quad \rho \geq 1,
 \end{equation*}
 and use hyperbolic coordinates $(\rho, y) \in [1, \infty) \times \bbR$ determined by
 \begin{equation} \label{equ:definition_hyperbolic_coordinates}
  x = \rho \sinh(y), \quad t = \rho \cosh(y).
 \end{equation}
 Then it holds that
 \begin{equation*}
  \partial_\rho = \rho^{-1} (t \partial_t + x \partial_x), \quad \partial_y = t \partial_x + x \partial_t,
 \end{equation*}
 and the linear Klein-Gordon operator can be written as
 \begin{equation*}
  \Box + 1 = \partial_t^2 - \partial_x^2 + 1 = \partial_\rho^2 + \frac{1}{\rho} \partial_\rho - \frac{1}{\rho^2} \partial_y^2 + 1.
 \end{equation*}
 Conjugating $\Box + 1$ by $t^{\frac{1}{2}}$ we find that
 \begin{equation} \label{equ:conjugate_by_thalf}
  \begin{aligned}
   t^{\frac{1}{2}} (\Box + 1) = t^{\frac{1}{2}} (\partial_t^2 - \partial_x^2 + 1) 
   = \Bigl( \pt^2 - \frac{1}{t} \pt - \px^2 + 1 + \frac{3}{4 t^2} \Bigr) t^{\frac{1}{2}}.
  \end{aligned}
 \end{equation}
 In particular, in hyperbolic coordinates the identity~\eqref{equ:conjugate_by_thalf} reads 
 \begin{equation*}
  \rho^{\frac{1}{2}} \cosh^{\frac{1}{2}}(y) (\Box + 1) = \Bigl( \partial_\rho^2 - \frac{1}{\rho^2} \partial_y^2 + \frac{1}{\rho^2} \tanh(y) \partial_y + 1 + \frac{3}{4 \rho^2} \cosh^{-2}(y) \Bigr) \rho^{\frac{1}{2}} \cosh^{\frac{1}{2}}(y).
 \end{equation*}
 Hence, upon introducing the variable
 \begin{equation*}
  w = t^{\frac{1}{2}} u = \rho^{\frac{1}{2}} \cosh^{\frac{1}{2}}(y) u,
 \end{equation*}
 we may write the nonlinear Klein-Gordon equation~\eqref{equ:nlkg} as 
 \begin{equation} \label{equ:nlkg_hyperbolic_coord}
  \begin{aligned}
   \Bigl( \partial_\rho^2 - \frac{1}{\rho^2} \partial_y^2 + \frac{1}{\rho^2} \tanh(y) \partial_y + 1 + \frac{3}{4 \rho^2} \cosh^{-2}(y) \Bigr) w = \frac{1}{\rho} \frac{\beta_0}{\cosh(y)} w^3 + \frac{1}{\rho} \frac{\beta(\rho \sinh(y))}{\cosh(y)} w^3.
  \end{aligned}
 \end{equation}

\subsection{Outline of the proofs of Theorem~\ref{thm:main} and Corollary~\ref{cor:asymptotics}} \label{subsec:main_proof_ideas}

In the proof of Theorem~\ref{thm:main} we treat the interior and the exterior region separately.
The derivation of the exterior decay estimate~\eqref{equ:main_exterior_estimate} only relies on the stronger decay of the Klein-Gordon equation in the exterior of the light cone. We use an energy methods argument by Klainerman~\cite{Kl93}, following the presentation in Candy-Lindblad~\cite{CL18}.

For the proof of the interior decay estimate~\eqref{equ:main_interior_estimate} we foliate the interior region in terms of hyperboloids~$H_\rho$ and mostly work in hyperbolic coordinates. Then~\eqref{equ:main_interior_estimate} is equivalent to a uniform bound $\|w\|_{L^\infty_\rho L^\infty_y} \lesssim~\varepsilon$ for the variable $w$. By one-dimensional Sobolev embedding this would follow from a uniform energy bound $\|w\|_{L^\infty_\rho H^1_y} \lesssim \varepsilon$. The contribution of the constant coefficient cubic nonlinearity to such an $L^\infty_\rho H^1_y$ energy estimate is of the form $\int_1^R \rho^{-1} \|w(\rho)\|_{L^\infty_y}^2 \|w(\rho)\|_{H^1_y} \, \ud \rho$, which just about fails to be integrable. Energy estimates alone will therefore not suffice and we can at most hope to obtain a slowly growing energy estimate 
\begin{equation} \label{equ:intro_slow_H1y}
 \|w(\rho)\|_{H^1_y(\bbR)} \lesssim \varepsilon \rho^{\delta}
\end{equation}
for some small $0 < \delta \ll 1$ within a suitable bootstrap argument on the $L^\infty_\rho L^\infty_y$-norm of $w$. 
However, the contribution of the variable coefficient cubic nonlinearity seems to actually place a slowly growing energy estimate of the form~\eqref{equ:intro_slow_H1y} far out of reach, since the Lorentz boost $\partial_y$ produces a strongly divergent factor of~$\rho$ when it falls onto the variable coefficient $\beta( \rho \sinh(y) )$. Nevertheless, it turns out that one can judiciously combine pointwise-in-time local decay estimates for the Klein-Gordon propagator with the spatial localization of the variable coefficient $\beta(x)$ to counteract such a divergent factor and in this manner actually salvage a slowly growing energy bound of the form~\eqref{equ:intro_slow_H1y}.

Before turning to this, we outline how a slowly growing energy estimate of the form~\eqref{equ:intro_slow_H1y} implies a uniform $L^\infty_\rho L^\infty_y$ bound for $w$, and thus allows to close a bootstrap argument to infer the interior decay estimate~\eqref{equ:main_interior_estimate}.
This part of our proof uses many ideas from~\cite{LS05_1, LS05_2, LS15, Sterb16}.
The starting point is to decompose the variable $w = P_{\leq \rho^\sigma} w + P_{> \rho^\sigma} w$, using a $\rho$-dependent frequency cut-off for some small $0 < \delta \ll \sigma \ll 1$. Then the uniform boundedness of the high-frequency component $P_{> \rho^\sigma} w$ follows easily from~\eqref{equ:intro_slow_H1y} by exploiting that control of the $H^{\frac{1}{2}+\varepsilon}_y$-norm of $w$ suffices to bound its $L^\infty_y$-norm.
The low-frequency component $P_{\leq \rho^\sigma} w$ satisfies the evolution equation
\begin{equation} \label{equ:intro_equation_low_freq}
 \Bigl( \partial_\rho^2  - \frac{1}{\rho^2} \partial_y^2 + 1 \Bigr) (P_{\leq \rho^\sigma} w) = \frac{1}{\rho} P_{\leq 2^{10} \rho^\sigma} \biggl( \frac{\beta_0 + \beta(\rho \sinh(y))}{\cosh(y)} \biggr) (P_{\leq \rho^\sigma} w)^3 + \ldots,
\end{equation}
where we omit various remainder terms on the right-hand side that are integrable thanks to~\eqref{equ:intro_slow_H1y}. Moreover, thanks to the slowly growing energy bound~\eqref{equ:intro_slow_H1y} we can also treat $\rho^{-2} \partial_y^2 (P_{\leq \rho^\sigma} w)$ as an integrable error term and thus view~\eqref{equ:intro_equation_low_freq} as an ODE 
\begin{equation} \label{equ:intro_ODE_low_freq}
 (\partial_\rho^2 + 1) (P_{\leq \rho^\sigma} w) = \frac{1}{\rho} P_{\leq 2^{10} \rho^\sigma} \biggl( \frac{\beta_0 + \beta(\rho \sinh(y))}{\cosh(y)} \biggr) (P_{\leq \rho^\sigma} w)^3 + \ldots
\end{equation}
Now we are in the position to deduce the desired uniform boundedness of the low-frequency component $P_{\leq \rho^\sigma} w$ by invoking an $L^\infty_\rho$-estimate for the ODE $(\partial_\rho^2 + 1) v = \rho^{-1} \tilde{\beta} v^3$ (by multiplying by $\partial_\rho v$) and by treating all other terms in~\eqref{equ:intro_equation_low_freq} as integrable error terms using~\eqref{equ:intro_slow_H1y}. We
remark that similar ideas were also used in~\cite{Lind90, Lind92, LR03} for the wave equation and are related to the weak null condition.

Finally, we discuss the derivation of the slowly growing energy estimate~\eqref{equ:intro_slow_H1y}. To avoid several technicalities and to only focus on the main ideas, here we consider a related, slightly simpler task in rectilinear coordinates $(t,x)$. Namely, to obtain a slowly growing energy estimate for one Lorentz boost of the solution to~\eqref{equ:nlkg} of the type
\begin{equation} \label{equ:intro_slow_Zu}
 \|Zu(t)\|_{L^2_x(\bbR)} \lesssim \varepsilon t^{\delta},
\end{equation}
under a bootstrap assumption $\|u(t)\|_{L^\infty_x(\bbR)} \lesssim \varepsilon \jap{t}^{-\frac{1}{2}}$. This argument can then be upgraded to deduce~\eqref{equ:intro_slow_H1y}, see Subsection~\ref{subsec:interior_energy_estimate} for the details.

In order to better isolate the contribution of the variable coefficient cubic nonlinearity to energy estimates, we decompose the solution to~\eqref{equ:nlkg} into 
\begin{equation} \label{equ:decomposition_solution}
 u(t) = u_0(t) + u_1(t), 
\end{equation}
where the components satisfy
\begin{align*}
 (\Box + 1) u_0 &= \beta_0 u^3 = \beta_0 (u_0 + u_1)^3, &\text{ with } (u_0, \partial_t u_0)|_{t=1} = (f, g), \\
 (\Box + 1) u_1 &= \beta(x) u^3 = \beta(x) (u_0 + u_1)^3,  &\text{ with } (u_1, \partial_t u_1)|_{t=1} = (0,0).
\end{align*}

We recall that the Klein-Gordon energy functional $E(\phi) = \int_{\bbR} (\partial_t \phi)^2 + (\partial_x \phi)^2 + \phi^2 \, \ud x$ satisfies 
\begin{equation*}
 \partial_t E(\phi) = 2 \int_{\bbR} \bigl( (\Box + 1) \phi \bigr) \partial_t \phi \, \ud x.
\end{equation*}
Obviously, if we establish a slow growth bound $\|Z u_\ell(t)\|_{L^2_x(\bbR)} \lesssim \varepsilon t^{\delta}$, $\ell = 0, 1$, for each component separately, then the desired estimate~\eqref{equ:intro_slow_Zu} for the entire solution $u(t)$ follows. Such a bound is straightforward for the component $u_0(t)$. Instead, for the energy of $(Zu_1)(t)$ we find that
\begin{align*}
 \partial_t E(Z u_1) = 2 \int_{\bbR} \bigl( (\Box + 1) Z u_1 \bigr) \partial_t Zu_1 \, \ud x = 2 \int_{\bbR} t \, (\partial_x \beta)(x) u^3 (\partial_t Z u_1) \, \ud x + \ldots,
\end{align*}
where on the right-hand side we only display the worst contribution when $Z$ falls onto the variable coefficient. Due to the divergent factor of $t$, at first sight a slow growth estimate for $E(Z u_1)$ seems out of reach. However, at this point the key idea is to realize that $(\partial_t Z u_1)(t)$ satisfies a weighted energy estimate of the form
\begin{equation} \label{equ:intro_weighted_Zu1}
 \| \jap{x}^{-2} (\partial_t Z u_1)(t) \|_{L^2_x(\bbR)} \lesssim \frac{\varepsilon^3}{\jap{t}^{\frac{1}{2}}},
\end{equation}
which can provide additional decay. Indeed, exploiting the spatial localization of the variable coefficient $\beta(x)$ and using the bootstrap assumption $\|u(t)\|_{L^\infty_x(\bbR)} \lesssim \varepsilon \jap{t}^{-\frac{1}{2}}$, we may bound
\begin{align*}
 | \partial_t E(Z u_1) | &\lesssim t \, \| \jap{x}^2 \partial_x \beta(x) \|_{L^2_x(\bbR)} \|u(t)\|_{L^\infty_x(\bbR)}^3 \| \jap{x}^{-2} \partial_t Zu_1(t) \|_{L^2_x(\bbR)} + \ldots \lesssim \frac{\varepsilon^6}{\jap{t}} + \ldots
\end{align*}
Now the right-hand side only barely fails to be integrable in time! Correspondingly, we can actually obtain a slow growth estimate for the energy $E(Zu_1)$, which implies the desired bound $\|Z u_1(t)\|_{L^2_x(\bbR)} \lesssim \varepsilon^3 t^{\delta}$ for the component $u_1(t)$.

In order to arrive at the weighted energy estimate~\eqref{equ:intro_weighted_Zu1}, we combine the spatial localization provided by the variable coefficient $\beta(x)$ with pointwise-in-time local decay estimates for the Klein-Gordon propagator of the form
\begin{equation*}
 \bigl\| \jap{x}^{-2} e^{\pm i t \jap{\nabla}} \jap{x}^{-2} \bigr\|_{L^2_x(\bbR) \to L^2_x(\bbR)} \lesssim \frac{1}{\jap{t}^{\frac{1}{2}}}, \quad \text{and} \quad \Bigl\| \jap{x}^{-2} \frac{\partial_x}{\jap{\nabla}} e^{\pm i t \jap{\nabla}} \jap{x}^{-2} \Bigr\|_{L^2_x(\bbR) \to L^2_x(\bbR)} \lesssim \frac{1}{\jap{t}^{\frac{3}{2}}}.
\end{equation*}
Then decomposing $\partial_t Z u_1 = \partial_x u_1 + t \partial_x \partial_t u_1 + \ldots$ and writing $u_1(t)$ in Duhamel form, we find that 
\begin{align*}
 \bigl\| \jap{x}^{-2} \partial_x u_1(t) \bigr\|_{L^2_x(\bbR)} &\lesssim \int_1^t \Bigl\| \jap{x}^{-2} \frac{\partial_x}{\jap{\nabla}} \sin((t-s)\jap{\nabla}) \jap{x}^{-2} \Bigr\|_{L^2_x(\bbR) \to L^2_x(\bbR)} \bigl\| \jap{x}^2 \beta(x) \|_{L^2_x(\bbR)} \| u(s) \|_{L^\infty_x(\bbR)}^3 \, \ud s \\
 &\lesssim \int_1^t \frac{1}{\jap{t-s}^{\frac{3}{2}}} \frac{\varepsilon^3}{\jap{s}^{\frac{3}{2}}} \, \ud s \\
 &\lesssim \frac{\varepsilon^3}{\jap{t}^{\frac{3}{2}}}.
\end{align*}
In an analogous manner we may conclude that $\| \jap{x}^{-2} t \partial_x \partial_t u_1(t) \|_{L^2_x(\bbR)} \lesssim \varepsilon^3 \jap{t}^{-\frac{1}{2}}$. All other terms in the decomposition of $(\partial_t Z u_1)(t)$ can be dealt with similarly, thus establishing~\eqref{equ:intro_weighted_Zu1}.

\medskip 

Lastly, we note that the proof of Corollary~\ref{cor:asymptotics} follows the approach in~\cite{LS05_1, LS05_2} and is based on a version of the ODE argument in the proof of Theorem~\ref{thm:main}, which takes into account the oscillations of the solution to reveal the precise asymptotics. 

\medskip

\noindent {\it Organization of the paper:} The remainder of this paper is structured as follows. In Section~\ref{sec:preliminaries} we recall some Littlewood-Paley theory and collect the pointwise-in-time local decay estimates for the Klein-Gordon propagator that play a crucial role in our argument. Then we provide the proofs of the exterior decay estimate~\eqref{equ:main_exterior_estimate} in Section~\ref{sec:exterior_region} and of the interior decay estimate~\eqref{equ:main_interior_estimate} in Section~\ref{sec:interior_region}. Finally, we establish the asymptotic behavior of the solution in Section~\ref{sec:asymptotics}.
 
\medskip

\noindent {\it Acknowledgements:} The authors are grateful to Timothy Candy for helpful discussions.

\section{Preliminaries} \label{sec:preliminaries}

\subsection{Notation}

 We use the standard convention to denote by $C > 0$ an absolute constant whose value may change from line to line. We write $X \lesssim Y$ if $X \leq C Y$. In order to avoid writing out absolute constants in some identities, we use the notation $X \simeq Y$ to indicate that $|X| \lesssim |Y| \lesssim |X|$. Moreover, we write $X \ll Y$ if the implicit constant is to be regarded suitably small. We use $\jap{x} := (1 + x^2)^{\frac{1}{2}}$ with analogous definitions for $\jap{t}$ and $\jap{\xi}$.

 The Fourier transform and its inverse are defined by
 \begin{equation*}
  \hat{u}(\xi) = (\calF u)(\xi) := \frac{1}{\sqrt{2\pi}} \int_{\bbR} e^{-i x \xi} u(x) \, \ud x, \quad \text{and} \quad (\calF^{-1} u)(x) := \frac{1}{\sqrt{2\pi}} \int_{\bbR} e^{i x \xi} u(\xi) \, \ud \xi.
 \end{equation*}
 We will employ the spatial Fourier transform both in the rectilinear coordinates $(t,x)$ and in the hyperbolic coordinates $(\rho, y)$. Unless it is clear from the context, we will use subscripts to indicate the spatial variable with respect to which the Fourier transform is taken. 
 
 Finally, we set $\jap{\nabla} := \calF_{x}^{-1} \jap{\xi} \calF_{x}$ and denote by $e^{\pm i t \jap{\nabla}} := \calF_x^{-1} e^{\pm i t \jap{\xi}} \calF_x$ the propagators for the linear Klein-Gordon equation.

\subsection{Littlewood-Paley theory}

 In the proof of the interior decay estimate and the asymptotic behavior of the solution in Section 4 and 5, we will use some basic Littlewood-Paley theory while working in hyperbolic coordinates $(\rho, y)$. We denote by $\eta$ the spatial Fourier variable with respect to $y$. Let $\varphi \in C_c^\infty(\bbR)$ be a smooth bump function satisfying $\varphi(\eta) = 1$ for $|\eta| \leq 1$ and $\varphi(\eta) = 0$ for $|\eta| \geq 2$. Then $\psi(\eta) := \varphi(\eta) - \varphi(2 \eta)$ is a smooth bump function whose support is localized to $|\eta| \sim 1$. 
 For any integer $k \in \bbZ$ we define the usual dyadic Littlewood-Paley projections
 \begin{equation} \label{equ:definition_Pk}
  P_k w := \calF_y^{-1} \Bigl( \psi( {\textstyle \frac{\eta}{2^k} } ) (\calF_y w)(\eta) \Bigr).
 \end{equation}
 Moreover, for any $\mu > \lambda > 0$ we introduce the projections
 \begin{align*}
  P_{\leq \lambda} w &:= \calF_y^{-1} \Bigl( \varphi( {\textstyle \frac{\eta}{\lambda} } ) (\calF_y w)(\eta) \Bigr)
 \end{align*}
 with corresponding definitions for $P_{>\lambda}$ and $P_{[\lambda, \mu]}$. 
 
 The following standard Bernstein estimates will be used freely in the sequel 
 \begin{align*}
  \| P_k w \|_{L^p_y(\bbR)} \lesssim 2^{(\frac{1}{q} - \frac{1}{p})k} \| P_k w \|_{L^q_y(\bbR)}, \qquad \| \partial_y P_k w \|_{L^p_y(\bbR)} \simeq 2^k \|P_k w\|_{L^p_y(\bbR)}, \quad 1 \leq q \leq p \leq \infty.
 \end{align*}
 Moreover, we will need $L^\infty_y$ bounds for frequency localizations of the coefficients of the cubic nonlinearities on the right-hand side of~\eqref{equ:nlkg_hyperbolic_coord}.
 \begin{lemma}
  For any $k \in \bbZ$, $N \in \bbN$, and $\rho \geq 1$ we have that
  \begin{align} 
   \Bigl\| P_k \Bigl( \frac{\beta_0}{\cosh(\cdot)} \Bigr) \Bigr\|_{L^\infty_y(\bbR)} &\lesssim_N 2^{-N k},    \label{equ:Pk_beta_const_Linfty} \\
   \Bigl\| P_k \Bigl( \frac{\beta( \rho \sinh(\cdot) )}{\cosh(\cdot)} \Bigr) \Bigr\|_{L^\infty_y(\bbR)} &\lesssim 2^k \rho^{-1}.    \label{equ:Pk_beta_variable_Linfty}
  \end{align}
 \end{lemma}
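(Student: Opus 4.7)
The plan is to prove both estimates via elementary Fourier-analytic bounds on the Littlewood-Paley projector, exploiting the Schwartz nature of the underlying functions.

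For \eqref{equ:Pk_beta_const_Linfty}, since $\beta_0/\cosh(y)$ is Schwartz, its Fourier transform $h(\eta) := \calF_y(\beta_0/\cosh)(\eta)$ is Schwartz as well. Writing
\begin{equation*}
 P_k(\beta_0/\cosh)(y) = \frac{1}{\sqrt{2\pi}} \int_{\bbR} e^{iy\eta}\, \psi(\eta/2^k) h(\eta)\, \ud \eta,
\end{equation*}
I would estimate the $L^\infty_y$ norm by $\|\psi(\cdot/2^k) h\|_{L^1_\eta}$. For $k \geq 0$, the integrand is supported on $|\eta| \sim 2^k$ where Schwartz decay gives $|h(\eta)| \lesssim_N \jap{\eta}^{-(N+1)} \lesssim 2^{-(N+1)k}$, which after integration over an interval of length $\sim 2^k$ yields the desired bound $\lesssim_N 2^{-Nk}$. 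For $k < 0$ the claim is trivial: $\|P_k(\beta_0/\cosh)\|_{L^\infty_y} \lesssim \|\beta_0/\cosh\|_{L^\infty_y} \lesssim 1 \lesssim 2^{-Nk}$ since $N \geq 0$.

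For \eqref{equ:Pk_beta_variable_Linfty}, the key point is that the $\rho$-dependent function $g_\rho(y) := \beta(\rho \sinh(y))/\cosh(y)$, although it concentrates on scale $\sim 1/\rho$, has a controllable $L^1_y$ norm, and the factor of $\rho^{-1}$ will be extracted from there. I would perform the change of variables $x = \rho \sinh(y)$, so that $\ud x = \rho \cosh(y)\, \ud y$ and $\cosh^2(y) = 1 + x^2/\rho^2$, to compute
\begin{equation*}
 \|g_\rho\|_{L^1_y(\bbR)} = \int_{\bbR} \frac{|\beta(\rho \sinh(y))|}{\cosh(y)}\, \ud y = \frac{1}{\rho} \int_{\bbR} \frac{|\beta(x)|}{1 + x^2/\rho^2}\, \ud x \leq \frac{\|\beta\|_{L^1_x(\bbR)}}{\rho}.
\end{equation*}
Then by Hausdorff--Young, $\|\hat{g}_\rho\|_{L^\infty_\eta(\bbR)} \lesssim \|g_\rho\|_{L^1_y(\bbR)} \lesssim \rho^{-1}$, and so
\begin{equation*}
 \|P_k g_\rho\|_{L^\infty_y(\bbR)} \lesssim \|\psi(\cdot/2^k) \hat{g}_\rho\|_{L^1_\eta(\bbR)} \leq \|\psi(\cdot/2^k)\|_{L^1_\eta(\bbR)} \|\hat{g}_\rho\|_{L^\infty_\eta(\bbR)} \lesssim 2^k \rho^{-1}.
\end{equation*}

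The argument is short and contains no real obstacle; the only point that requires some care is the change-of-variables computation, which is precisely where the decisive factor $\rho^{-1}$ is manufactured from the spatial localization of $\beta$. Note that no cancellation or oscillation is used: the bound on $g_\rho$ is at the level of absolute values, and the factor $2^k$ comes simply from the measure of the frequency annulus $|\eta| \sim 2^k$.
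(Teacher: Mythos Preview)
Your proof is correct and follows essentially the same route as the paper: for \eqref{equ:Pk_beta_const_Linfty} the paper invokes the Bernstein estimate $\|P_k f\|_{L^\infty_y} \lesssim 2^{-Nk}\|\partial_y^N f\|_{L^\infty_y}$ directly (which is equivalent to your Fourier-side Schwartz decay argument), and for \eqref{equ:Pk_beta_variable_Linfty} the paper uses the same chain $\|P_k g_\rho\|_{L^\infty_y} \lesssim 2^k \|\hat g_\rho\|_{L^\infty_\eta} \lesssim 2^k \|g_\rho\|_{L^1_y} \lesssim 2^k \rho^{-1}$, though it bounds the $L^1_y$ norm via the pointwise decay $|\beta(x)|\lesssim \jap{x}^{-2}$ rather than your (slightly cleaner) exact change of variables $x=\rho\sinh(y)$.
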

 \begin{proof}
  The estimate~\eqref{equ:Pk_beta_const_Linfty} is a consequence of standard Bernstein estimates
  \begin{align*}
   \Bigl\| P_k \Bigl( \frac{\beta_0}{\cosh(\cdot)} \Bigr) \Bigr\|_{L^\infty_y} \lesssim 2^{-Nk} \bigl\| \partial_y^N \cosh^{-1}(\cdot) \bigr\|_{L^\infty_y} \lesssim_N 2^{-Nk}.
  \end{align*}
  For the proof of~\eqref{equ:Pk_beta_variable_Linfty} we start from the definition~\eqref{equ:definition_Pk} of the projection $P_k$ and compute that
  \begin{align*}
   \biggl\| P_k \biggl( \frac{\beta(\rho \sinh(\cdot))}{\cosh(\cdot)} \biggr) \biggr\|_{L^\infty_y} 
   \lesssim \biggl( \int_{\bbR_\eta} \, \Bigl| \psi \Bigl( \frac{\eta}{2^k} \Bigr) \Bigr| \, \ud \eta \biggr) \biggl\| \calF_y \biggl( \frac{\beta(\rho \sinh(\cdot))}{\cosh(\cdot)} \biggr)  \biggr\|_{L^\infty_\eta} \lesssim 2^k \biggl\| \frac{\beta(\rho \sinh(\cdot))}{\cosh(\cdot)} \biggr\|_{L^1_y}.
  \end{align*}
  Then we exploit that $\beta(x)$ is rapidly decaying by assumption to find by a change of variables that
  \begin{align*}
   \biggl\| \frac{\beta(\rho \sinh(\cdot))}{\cosh(\cdot)} \biggr\|_{L^1_y} &\lesssim \int_{\bbR} | \beta(\rho \sinh(y)) | \, \ud y \lesssim \int_{\bbR} \frac{C(\beta)}{(1 + \rho |\sinh(y)|)^2} \, \ud y \lesssim \int_{\bbR} \frac{C(\beta)}{(1 + \rho |y|)^2} \, \ud y \lesssim \rho^{-1},
  \end{align*}
  which together with the previous bound yields~\eqref{equ:Pk_beta_variable_Linfty}.
 \end{proof}

\subsection{Pointwise-in-time local decay estimates for the free Klein-Gordon propagator} \label{subsec:weighted_L2_decay}

In this subsection we collect the pointwise-in-time local decay estimates for the free Klein-Gordon propagator that play an essential role in Section~\ref{sec:interior_region} in the derivation of slowly growing energy estimates of a Lorentz boost of the solution to~\eqref{equ:nlkg}. For the convenience of the reader we provide simple Fourier analysis proofs here, but we emphasize that such estimates can be established for much more general classes of Hamiltonian operators via resolvent or positive commutator methods, see \cite{Rauch78, KJ79, Jensen80, Jensen84, JMP84, GLS16, Schl07} and references therein.

\begin{lemma} \label{lem:weighted_decay}
 For any $a \geq 1$ and $b \geq 0$, there exists an absolute constant $C \geq 1$ such that
 \begin{equation} \label{equ:weighted_decay}
  \sup_{t \in \bbR} \, \jap{t}^{\frac{1}{2}} \bigl\| \jap{x}^{-a} \jap{\nabla}^{-b} e^{\pm i t \jap{\nabla}} \jap{x}^{-a} \bigr\|_{L^2_x(\bR) \to L^2_x(\bR)} \leq C.
 \end{equation}
\end{lemma}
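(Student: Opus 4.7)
For $|t| \leq 1$ the claimed bound is immediate, since $e^{\pm it\jap{\nabla}}$ is unitary on $L^2_x$, $\jap{\nabla}^{-b}$ has operator norm at most $1$ for $b \geq 0$, and $\|\jap{x}^{-a}\|_{L^\infty} = 1$. For $t > 1$ (the case $t < -1$ being symmetric), the plan is to realize the operator as integration against the Schwartz kernel $K_t(x,y) = \jap{x}^{-a} J_b(t, x-y) \jap{y}^{-a}$, where
\[
J_b(t, z) := \frac{1}{2\pi}\int_{\bbR} e^{iz\xi \pm it\jap{\xi}} \jap{\xi}^{-b}\, d\xi,
\]
and to analyze $J_b$ via stationary / non-stationary phase depending on the relative size of $|z|$ and $t$.

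For the phase $\phi(\xi) = z\xi \pm t\jap{\xi}$ the derivatives are $\phi'(\xi) = z \pm t\xi/\jap{\xi}$ and $\phi''(\xi) = \pm t\jap{\xi}^{-3}$. When $|z| \leq t/2$, the real critical point $\xi_0$ solves $\xi_0/\jap{\xi_0} = \mp z/t$, so $|\xi_0| \lesssim 1$ and $|\phi''(\xi_0)| \gtrsim t$, and standard stationary phase (with the tails controlled by integration by parts against a cutoff) gives $|J_b(t, z)| \lesssim t^{-1/2}$. When $|z| > 2t$, one has $|\phi'(\xi)| \geq |z|/2$ uniformly in $\xi$, so iterated integration by parts yields $|J_b(t,z)| \lesssim_N |z|^{-N}$ for any $N$.

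I would then split $K_t = K_t^{\mathrm{int}} + K_t^{\mathrm{cone}} + K_t^{\mathrm{ext}}$ according to whether $|x-y| \leq t/2$, $t/2 < |x-y| \leq 2t$, or $|x-y| > 2t$. The Hilbert--Schmidt inequality together with the hypothesis $a \geq 1$ (which guarantees $\jap{x}^{-2a} \in L^1_x(\bbR)$) gives
\[
\|K_t^{\mathrm{int}}\|_{L^2 \to L^2}^2 \leq \|K_t^{\mathrm{int}}\|_{\mathrm{HS}}^2 \lesssim t^{-1}\iint \jap{x}^{-2a}\jap{y}^{-2a}\,dx\,dy \lesssim t^{-1},
\]
and Schur's test applied to the rapidly decaying exterior kernel yields $\|K_t^{\mathrm{ext}}\|_{L^2\to L^2} \lesssim_N t^{-N}$ for any $N$.

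The hard part will be the near light cone region $|x-y| \sim t$, where the stationary point $\xi_0$ becomes degenerate ($|\xi_0| \to \infty$ and $\phi''(\xi_0) \to 0$) so that the pointwise bound $|J_b(t,z)| \lesssim t^{-1/2}$ can fail. The key observation here is that the triangle inequality forces $\max(\jap{x}, \jap{y}) \gtrsim \jap{x-y} \sim t$ on this set, which produces the compensating gain $\jap{x}^{-a}\jap{y}^{-a} \lesssim t^{-a}$. Combining this $t^{-a}$ factor with the Plancherel bound $\|J_b(t,\cdot)\|_{L^2_z} \lesssim \|\jap{\xi}^{-b}\|_{L^2_\xi} < \infty$ for $b > 1/2$, and applying Cauchy--Schwarz inside Schur's test, should give $\|K_t^{\mathrm{cone}}\|_{L^2 \to L^2} \lesssim t^{-a+1/2} \lesssim t^{-1/2}$ since $a \geq 1$. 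For the complementary range $b \in [0, 1/2]$ I would instead run a dyadic Littlewood--Paley decomposition $\jap{\nabla}^{-b} = \sum_k \jap{\nabla}^{-b} P_k$ in combination with the standard frequency-localized dispersive estimate $\|P_k e^{\pm it\jap{\nabla}}\|_{L^1 \to L^\infty} \lesssim t^{-1/2}\max(1, 2^k)^{3/2}$ on each dyadic piece, absorbing the modest losses from summing in $k$ into the $t^{-a}$ gain. Summing the three regional contributions yields the claimed bound.
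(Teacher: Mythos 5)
Your kernel-based route (split $K_t(x,y)=\langle x\rangle^{-a}J_b(t,x-y)\langle y\rangle^{-a}$ into interior, near-cone, and exterior pieces in $|x-y|$, then use stationary/non-stationary phase on $J_b$ plus Hilbert--Schmidt and Schur) is genuinely different from the paper's proof. The paper never forms the kernel: it stays entirely on the Fourier side, splits the $\xi$-integral at the scale $|\xi|\sim t^{-1/2}$, bounds the low-frequency piece by the measure of its support times $\|\widehat{\langle x\rangle^{-1}f}\|_{L^\infty_\xi}$ via $H^1_\xi\hookrightarrow L^\infty_\xi$ (which is exactly where the spatial weight is spent, since $\partial_\xi \widehat{\langle x\rangle^{-1}f}=\widehat{-ix\langle x\rangle^{-1}f}\in L^2$), and handles the high-frequency piece by a single integration by parts using $e^{it\langle\xi\rangle}=(it)^{-1}(\langle\xi\rangle/\xi)\,\partial_\xi e^{it\langle\xi\rangle}$. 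Because of the $\xi$-cutoff at $t^{-1/2}$, the paper never encounters the near-light-cone singularity of the propagator kernel at all, which is the thing that makes your approach delicate.

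And that delicacy is where your proposal has a genuine gap. The lemma is applied in the paper with $b=0$ (more generally $b\in[0,1/2]$ is the relevant range), so your Littlewood--Paley fallback is not a side case. The sum you propose, $\sum_k t^{-1/2}\max(1,2^k)^{3/2}2^{-bk}$, diverges in $k$ for every $b<3/2$ (the summand is not even bounded for $k\ge 0$), and the $t^{-a}$ gain from the triangle inequality on the cone is uniform in $k$, so it cannot rescue a divergent sum; the ``losses'' are not modest. What is actually true, and what you would need to prove, is that the frequency-$2^k$ piece of the kernel is concentrated on the very thin shell $\bigl||x-y|-t\bigr|\lesssim t\,2^{-2k}$ near the light cone (the stationary point satisfies $|\xi_0|\sim 2^k$ only when $1-|z|/t\sim 2^{-2k}$, and off this shell one can integrate by parts), so that the shrinking physical-space support compensates the growing $L^\infty$ bound. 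That requires a joint dyadic decomposition in frequency \emph{and} in distance to the cone, a nontrivial extra step you have not carried out. Two smaller points: for $b\le 1/2$ the distribution $J_b$ is not locally integrable near $|z|=t$, so ``standard stationary phase with cutoffs'' needs to be made precise before any pointwise bound is quoted; and your Hilbert--Schmidt bound on the cone for $b>1/2$ in fact gives $t^{-a}$ rather than $t^{-a+1/2}$ (harmless, but worth noting).
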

\begin{proof}
 We only consider the case $a = 1$ and $b=0$ for $e^{+it\langle \nabla \rangle}$. The other cases can be treated analogously. By time-reversal symmetry we may assume that $t$ is non-negative and we may additionally assume that $t \geq 1$, since otherwise the estimate~\eqref{equ:weighted_decay} is trivial.

 Let $\varphi \in C_c^\infty(\bR)$ be a smooth, non-negative bump function satisfying $\varphi(\xi) = 1$ for $|\xi| \leq 1$ and $\varphi(\xi) = 0$ for $|\xi| \geq 2$. Now let $f \in L^2_x(\bR)$. By density of $\calS(\bR)$ in $L^2_x(\bR)$ we may assume that $f$ is in fact smooth and rapidly decaying so that all identities and integration by parts in what follows are justified. We have
 \begin{align*}
  \jap{x}^{-1} e^{it\jap{\nabla}} \jap{x}^{-1} f &\simeq \jap{x}^{-1} \int_{\bbR} e^{ix\xi} e^{it\jap{\xi}} \widehat{ (\jap{x}^{-1} f) }(\xi) \, \ud \xi \\
  &= \jap{x}^{-1} \int_{\bbR} e^{ix\xi} e^{it\jap{\xi}} \varphi( t^{\frac{1}{2}} \xi ) \widehat{ (\jap{x}^{-1} f) }(\xi) \, \ud \xi \\
  &\quad + \jap{x}^{-1} \int_{\bbR} e^{ix\xi} e^{it\jap{\xi}} \bigl(1 - \varphi(t^{\frac{1}{2}} \xi) \bigr) \widehat{ (\jap{x}^{-1} f) }(\xi) \, \ud \xi \\
  &\equiv I + II.
 \end{align*}
 In order to estimate the term $I$ we use one-dimensional Sobolev embedding $H^1_\xi(\bbR) \hookrightarrow L^\infty_\xi(\bbR)$ in the Fourier variable to obtain that
 \begin{align*}
  |I| &\lesssim \jap{x}^{-1} \biggl( \int_{\bbR} |\varphi(t^{\frac{1}{2}} \xi)| \, \ud \xi \biggr) \bigl\| \widehat{ (\jap{x}^{-1} f) }(\xi) \bigr\|_{L^\infty_\xi(\bbR)} \lesssim \jap{x}^{-1} t^{-\frac{1}{2}} \bigl\| \widehat{ (\jap{x}^{-1} f) }(\xi) \bigr\|_{H^1_\xi(\bbR)}.
 \end{align*}
 By Plancherel's theorem we have
 \begin{align*}
  \bigl\| \widehat{ (\jap{x}^{-1} f) }(\xi) \bigr\|_{H^1_\xi(\bbR)} &\lesssim \bigl\| \widehat{ (\jap{x}^{-1} f) }(\xi) \bigr\|_{L^2_\xi(\bbR)} + \bigl\| \partial_\xi \widehat{ (\jap{x}^{-1} f) }(\xi) \bigr\|_{L^2_\xi(\bbR)} \\
  &\simeq \bigl\| \widehat{ (\jap{x}^{-1} f) }(\xi) \bigr\|_{L^2_\xi(\bbR)} + \bigl\| {\textstyle \widehat{ (\frac{-i x}{\jap{x}} f)} }(\xi) \bigr\|_{L^2_\xi(\bbR)} \\
  &\lesssim \bigl\| \jap{x}^{-1} f \bigr\|_{L^2_x(\bbR)} + \bigl\| {\textstyle \frac{-ix}{\jap{x}}} f \bigr\|_{L^2_x(\bbR)} \\
  &\lesssim \|f\|_{L^2_x(\bR)}.
 \end{align*}
 Since $\|\jap{x}^{-1}\|_{L^2_x(\bbR)} \lesssim 1$, it follows that
 \begin{equation*}
  \| I \|_{L^2_x(\bbR)} \lesssim t^{-\frac{1}{2}} \|f\|_{L^2_x(\bbR)}.
 \end{equation*}
 In order to bound the term $II$ we use the identity (for $\xi \neq 0$)
 \[
  e^{it\jap{\xi}} = \frac{1}{i t} \frac{\jap{\xi}}{\xi} \frac{\partial}{\partial \xi} \bigl( e^{i t \jap{\xi}} \bigr)
 \]
 and integrate by parts (in $\xi$) to find that
 \begin{align*}
  II &= \jap{x}^{-1} \int_{\bbR} e^{ix\xi} e^{it\jap{\xi}} \bigl(1 - \varphi(t^{\frac{1}{2}} \xi) \bigr) \widehat{ (\jap{x}^{-1} f) }(\xi) \, \ud \xi \\
  &= - \frac{1}{it} \jap{x}^{-1} \int_{\bbR} e^{it\jap{\xi}} \frac{\partial}{\partial \xi} \biggl( e^{ix\xi} \frac{\jap{\xi}}{\xi} \bigl(1 - \varphi(t^{\frac{1}{2}} \xi) \bigr) \widehat{ (\jap{x}^{-1} f) }(\xi) \biggr) \, \ud \xi \\
  &= - \frac{1}{t} \frac{x}{\jap{x}} \int_{\bbR} e^{it\jap{\xi}} e^{ix\xi} \frac{\jap{\xi}}{\xi} \bigl(1 - \varphi(t^{\frac{1}{2}} \xi) \bigr) \widehat{ (\jap{x}^{-1} f) }(\xi) \, \ud \xi \\
  &\quad + \frac{1}{i t} \jap{x}^{-1} \int_{\bbR} e^{it\jap{\xi}} e^{ix\xi} \frac{1}{\jap{\xi} \xi^2} \bigl(1 - \varphi(t^{\frac{1}{2}} \xi) \bigr) \widehat{ (\jap{x}^{-1} f) }(\xi) \, \ud \xi \\
  &\quad + \frac{t^{\frac{1}{2}}}{i t} \jap{x}^{-1} \int_{\bbR} e^{it\jap{\xi}} e^{ix\xi} \frac{\jap{\xi}}{\xi} \varphi'(t^{\frac{1}{2}} \xi) \widehat{ (\jap{x}^{-1} f) }(\xi) \, \ud \xi \\
  &\quad - \frac{1}{it} \jap{x}^{-1} \int_{\bbR} e^{it\jap{\xi}} e^{ix\xi} \frac{\jap{\xi}}{\xi} \bigl(1 - \varphi(t^{\frac{1}{2}} \xi) \bigr) \widehat{ {\textstyle ( \frac{-i x}{\jap{x}} f) } }(\xi) \, \ud \xi \\
  &\equiv II_A + II_B + II_C + II_D.
 \end{align*}

 In order to estimate the term $II_A$ we first note that in view of the support properties of $\varphi$ and since $t \geq 1$, we have uniformly for all $\xi \in \bbR$ that
 \begin{equation*}
  \Bigl| \frac{\jap{\xi}}{\xi} (1 - \varphi(t^{\frac{1}{2}} \xi)) \Bigr| \lesssim t^{\frac{1}{2}}.
 \end{equation*}
 Thus, by Plancherel it holds that
 \begin{align*}
  \biggl\| \int_{\bbR} e^{i x \xi} e^{i t \jap{\xi}} \frac{\jap{\xi}}{\xi} \bigl(1 - \varphi(t^{\frac{1}{2}} \xi) \bigr) \widehat{ (\jap{x}^{-1} f) }(\xi) \, \ud \xi \biggr\|_{L^2_x(\bbR)} &= \biggl\| e^{i t \jap{\xi}} \frac{\jap{\xi}}{\xi} \bigl(1 - \varphi(t^{\frac{1}{2}} \xi) \bigr) \widehat{ (\jap{x}^{-1} f) }(\xi) \biggr\|_{L^2_{\xi}(\bbR)} \\
  &\lesssim t^{\frac{1}{2}} \bigl\| \widehat{ (\jap{x}^{-1} f) }(\xi) \bigr\|_{L^2_\xi(\bbR)} \\
  &\lesssim t^{\frac{1}{2}} \|f\|_{L^2_x(\bbR)}.
 \end{align*}
 We may therefore bound the term $II_A$ by
 \begin{align*}
  \| II_A \|_{L^2_x(\bbR)} &\lesssim \frac{1}{t} \biggl\| e^{i t \jap{\xi}} \frac{\jap{\xi}}{\xi} \bigl(1 - \varphi(t^{\frac{1}{2}} \xi) \bigr) \widehat{ (\jap{x}^{-1} f) }(\xi) \biggr\|_{L^2_{\xi}(\bbR)} \lesssim \frac{1}{t^{\frac{1}{2}}} \|f\|_{L^2_x(\bbR)}.
 \end{align*}
 For the term $II_B$ we first observe that in view of the support properties of $\varphi$ and using again one-dimensional Sobolev embedding $H^1_{\xi}(\bbR) \hookrightarrow L^\infty_{\xi}(\bbR)$, we have
 \begin{align*}
  \biggl\| \int_{\bbR} e^{it\jap{\xi}} e^{ix\xi} \frac{1}{\jap{\xi} \xi^2} \bigl(1 - \varphi(t^{\frac{1}{2}} \xi) \bigr) \widehat{ (\jap{x}^{-1} f) }(\xi) \, \ud \xi \biggr\|_{L^\infty_x(\bbR)} &\lesssim \biggl( \int_{ \{ |\xi| \geq t^{-\frac{1}{2}} \} } \frac{1}{\xi^2} \, \ud \xi \biggr) \bigl\| \widehat{ (\jap{x}^{-1} f) }(\xi) \bigr\|_{L^\infty_\xi(\bbR)} \\
  &\lesssim t^{\frac{1}{2}} \|f\|_{L^2_x(\bbR)}.
 \end{align*}
 Hence, we obtain that
 \begin{align*}
  \| II_B \|_{L^2_x(\bbR)} \lesssim \frac{1}{t} \bigl\| \jap{x}^{-1} \bigr\|_{L^2_x(\bbR)} \biggl\| \int_{\bbR} e^{it\jap{\xi}} e^{ix\xi} \frac{1}{\jap{\xi} \xi^2} \bigl(1 - \varphi(t^{\frac{1}{2}} \xi) \bigr) \widehat{ (\jap{x}^{-1} f) }(\xi) \, \ud \xi \biggr\|_{L^\infty_x(\bbR)} \lesssim \frac{1}{t^{\frac{1}{2}}} \|f\|_{L^2_x(\bbR)}.
 \end{align*}
 Next, to estimate the term $II_C$ we use that $t \geq 1$ and the support properties of $\varphi$ to find that
 \begin{align*}
  \| II_C \|_{L^2_x(\bbR)} &\lesssim \frac{1}{t^{\frac{1}{2}}} \bigl\| \jap{x}^{-1} \bigr\|_{L^2_x(\bbR)} \biggl( \int_{\bbR} \frac{\jap{\xi}}{|\xi|} |\varphi'(t^{\frac{1}{2}} \xi)| \, \ud \xi \biggr) \bigl\| \widehat{ (\jap{x}^{-1} f) }(\xi) \bigr\|_{L^\infty_\xi(\bbR)} \\
  &\lesssim \biggl( \int_{\bbR} \frac{1}{|t^{\frac{1}{2}} \xi|} |\varphi'(t^{\frac{1}{2}} \xi)| \, \ud \xi \biggr) \|f\|_{L^2_x(\bbR)} \\
  &\lesssim \frac{1}{t^{\frac{1}{2}}} \|f\|_{L^2_x(\bbR)},
 \end{align*}
 where in the last line we made a change of variables. Finally, in order to bound the term $II_D$ we use Plancherel again and proceed analogously to the treatment of the term $II_A$.
\end{proof}

In the presence of a spatial derivative, we obtain an improved pointwise-in-time local decay estimate.

\begin{lemma} \label{lem:weighted_derivative_decay}
 There exists an absolute constant $C \geq 1$ such that
 \begin{equation} \label{equ:weighted_derivative_decay}
  \sup_{t \in \bbR} \, \jap{t}^{\frac{3}{2}} \Bigl\| \jap{x}^{-2} \frac{\partial_x}{\jap{\nabla}} e^{\pm i t \jap{\nabla}} \jap{x}^{-2} \Bigr\|_{L^2_x(\bR) \to L^2_x(\bR)} \leq C.
 \end{equation}
\end{lemma}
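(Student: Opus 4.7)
The plan is to exploit the identity
\[
\frac{i\xi}{\jap{\xi}}\, e^{\pm it\jap{\xi}} \;=\; \pm \frac{1}{t}\, \partial_\xi\bigl( e^{\pm it\jap{\xi}} \bigr),
\]
which expresses the multiplier $i\xi/\jap{\xi}$ of $\partial_x/\jap{\nabla}$ exactly as a $\xi$-derivative falling on the propagator symbol. A single integration by parts in $\xi$ will then supply the extra factor of $t^{-1}$ beyond the $t^{-1/2}$ already provided by \lem{lem:weighted_decay}.

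By time-reversal symmetry I may assume $t \geq 0$, and for $0 \leq t \leq 1$ the estimate is immediate from the $L^2_x \to L^2_x$ boundedness of $\partial_x/\jap{\nabla}$. So I fix $t \geq 1$ and, by density, take $f \in \calS(\bbR)$, which ensures that $\widehat{(\jap{x}^{-2} f)}$ is Schwartz and that the integration by parts below produces no boundary contributions. Taking the Fourier representation and applying the identity gives
\[
\jap{x}^{-2} \frac{\partial_x}{\jap{\nabla}} e^{\pm it\jap{\nabla}}\bigl(\jap{x}^{-2} f\bigr)(x) \;\simeq\; \pm\frac{1}{t}\, \jap{x}^{-2} \int_{\bbR} \bigl( \partial_\xi e^{\pm it\jap{\xi}} \bigr)\, e^{ix\xi}\, \widehat{(\jap{x}^{-2} f)}(\xi)\, \ud\xi.
\]
Integrating by parts in $\xi$ and using $\partial_\xi \widehat{(\jap{x}^{-2} f)}(\xi) = -i\, \widehat{(x\, \jap{x}^{-2} f)}(\xi)$, I arrive at
\[
\jap{x}^{-2} \frac{\partial_x}{\jap{\nabla}} e^{\pm it\jap{\nabla}}\bigl(\jap{x}^{-2} f\bigr) \;=\; \mp \frac{i}{t}\, \frac{x}{\jap{x}^2}\, e^{\pm it\jap{\nabla}}\bigl(\jap{x}^{-2} f\bigr) \;\pm\; \frac{i}{t}\, \jap{x}^{-2}\, e^{\pm it\jap{\nabla}}\bigl( x\, \jap{x}^{-2} f \bigr).
\]

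To bound each term I dominate the outer weight pointwise by $\jap{x}^{-1}$, using $|x/\jap{x}^2| \leq \jap{x}^{-1}$ and $\jap{x}^{-2} \leq \jap{x}^{-1}$, and then factor the inner expression as $\jap{x}^{-2} f = \jap{x}^{-1}(\jap{x}^{-1} f)$ in the first term and $x\, \jap{x}^{-2} f = \jap{x}^{-1}(\tfrac{x}{\jap{x}} f)$ in the second. \lem{lem:weighted_decay} with $a = 1$, $b = 0$ then yields
\[
\bigl\| \jap{x}^{-1} e^{\pm it\jap{\nabla}} \jap{x}^{-1}\bigr\|_{L^2_x \to L^2_x} \;\lesssim\; t^{-\frac{1}{2}},
\]
and since $\| \jap{x}^{-1} f \|_{L^2_x}$ and $\| \tfrac{x}{\jap{x}} f \|_{L^2_x}$ are both bounded by $\|f\|_{L^2_x}$, each of the two pieces is controlled by $\frac{1}{t} \cdot t^{-\frac{1}{2}} \|f\|_{L^2_x} = t^{-\frac{3}{2}} \|f\|_{L^2_x}$, which is~\eqref{equ:weighted_derivative_decay}.

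I do not anticipate any substantial obstacle: the symbol-derivative identity does all the real work, and in contrast to the proof of \lem{lem:weighted_decay} no high/low frequency splitting is required here, because the vanishing of $\xi/\jap{\xi}$ at the stationary point $\xi = 0$ is captured exactly by the single integration by parts.
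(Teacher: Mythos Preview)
Your proof is correct and follows essentially the same approach as the paper: both use the identity $\frac{i\xi}{\jap{\xi}} e^{\pm it\jap{\xi}} = \pm\frac{1}{t}\partial_\xi(e^{\pm it\jap{\xi}})$ and a single integration by parts in $\xi$ to extract the factor $t^{-1}$, then obtain the remaining $t^{-1/2}$ from \lem{lem:weighted_decay}. The only cosmetic difference is that you invoke \lem{lem:weighted_decay} directly (after dominating the weights $\jap{x}^{-2}$ and $x\jap{x}^{-2}$ by $\jap{x}^{-1}$), whereas the paper phrases this as repeating the stationary phase argument from that lemma's proof; the content is the same.
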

\begin{proof}
 We may again assume that $t \geq 1$. Let $f \in L^2_x(\bbR)$. Using the identity
 \[
  \frac{i\xi}{\jap{\xi}} e^{it\jap{\xi}} = \frac{1}{t} \frac{\partial}{\partial \xi} \bigl( e^{it\jap{\xi}} \bigr),
 \]
 we integrate by parts (in $\xi$) once to write
 \begin{align*}
  \jap{x}^{-2} \frac{\partial_x}{\jap{\nabla}} e^{\pm i t \jap{\nabla}} \jap{x}^{-2} f &\simeq \jap{x}^{-2} \int_{\bbR} e^{ix\xi} \frac{i\xi}{\jap{\xi}} e^{it\jap{\xi}} \widehat{ (\jap{x}^{-2} f) }(\xi) \, \ud \xi \\
  &= \frac{1}{t} \jap{x}^{-2} \int_{\bbR} e^{i x \xi} \frac{\partial}{\partial \xi} \bigl( e^{it\jap{\xi}} \bigr) \widehat{ (\jap{x}^{-2} f) }(\xi) \, \ud \xi \\
  &= -\frac{1}{t} \frac{ix}{\jap{x}^2} \int_{\bbR} e^{ix\xi} e^{i t \jap{\xi}} \widehat{ (\jap{x}^{-2} f) }(\xi) \, \ud \xi \\
  &\quad - \frac{1}{t} \jap{x}^{-2} \int_{\bbR} e^{ix\xi} e^{i t \jap{\xi}} \widehat{ {\textstyle ( \frac{-ix}{\jap{x}^2} f) } }(\xi) \, \ud \xi \\
  &\equiv I + II.
 \end{align*}
 Both for term $I$ and for term $II$ we may now use the stationary phase type argument as in the proof of the decay estimate~\eqref{equ:weighted_decay} to infer an additional decay factor of $t^{-\frac{1}{2}}$, which yields the asserted overall $t^{-\frac{3}{2}}$ decay. This finishes the proof.
\end{proof}

\begin{lemma} \label{lem:weighted_derivative_decay2}
 There exists an absolute constant $C \geq 1$ such that
 \begin{equation} \label{equ:weighted_derivative_decay2}
  \sup_{t \in \bbR} \, \jap{t}^{\frac{3}{2}} \bigl\| \jap{x}^{-2} \partial_x e^{\pm i t \jap{\nabla}} \jap{x}^{-2} \bigr\|_{H^1_x(\bR) \to L^2_x(\bR)} \leq C.
 \end{equation}
\end{lemma}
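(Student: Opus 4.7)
The plan is to reduce the bound to Lemma~\ref{lem:weighted_derivative_decay} by factoring $\partial_x = \frac{\partial_x}{\jap{\nabla}}\jap{\nabla}$ and commuting $\jap{\nabla}$ through to the spatial weight. Since all Fourier multipliers commute with the propagator $e^{\pm i t \jap{\nabla}}$, one has
\[
\jap{x}^{-2} \partial_x \, e^{\pm i t \jap{\nabla}} \jap{x}^{-2} f
= \jap{x}^{-2} \frac{\partial_x}{\jap{\nabla}} e^{\pm i t \jap{\nabla}} \jap{\nabla}\bigl(\jap{x}^{-2} f\bigr),
\]
and I would then split $\jap{\nabla}(\jap{x}^{-2} f) = \jap{x}^{-2} \jap{\nabla} f + [\jap{\nabla}, \jap{x}^{-2}] f$. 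The first summand falls directly within the scope of Lemma~\ref{lem:weighted_derivative_decay} applied with input $\jap{\nabla} f \in L^2_x$, giving a bound by $\jap{t}^{-3/2}\|\jap{\nabla} f\|_{L^2_x} \lesssim \jap{t}^{-3/2}\|f\|_{H^1_x}$.

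For the commutator contribution, the crucial step is the factorization $[\jap{\nabla}, \jap{x}^{-2}] = \jap{x}^{-2}\, T$ for some bounded operator $T \colon L^2_x \to L^2_x$, equivalently the $L^2_x$-boundedness of $\jap{x}^2 [\jap{\nabla}, \jap{x}^{-2}]$. Granting this, a second application of Lemma~\ref{lem:weighted_derivative_decay} yields
\[
\bigl\| \jap{x}^{-2} \tfrac{\partial_x}{\jap{\nabla}} e^{\pm i t \jap{\nabla}} [\jap{\nabla}, \jap{x}^{-2}] f \bigr\|_{L^2_x}
= \bigl\| \jap{x}^{-2} \tfrac{\partial_x}{\jap{\nabla}} e^{\pm i t \jap{\nabla}} \jap{x}^{-2}(Tf) \bigr\|_{L^2_x}
\lesssim \jap{t}^{-\frac{3}{2}} \|f\|_{L^2_x} \leq \jap{t}^{-\frac{3}{2}} \|f\|_{H^1_x},
\]
completing the proof modulo the claimed factorization.

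The factorization is verified on the Fourier side. Starting from the representation
\[
\mathcal{F}_x\bigl([\jap{\nabla}, \jap{x}^{-2}] f\bigr)(\xi)
= \int \widehat{\jap{x}^{-2}}(\xi - \eta)\,(\jap{\xi} - \jap{\eta})\, \hat{f}(\eta)\, d\eta,
\]
the identity $\mathcal{F}_x(\jap{x}^2 g)(\xi) = (1 - \partial_\xi^2) \hat{g}(\xi)$, and the explicit formula $\widehat{\jap{x}^{-2}}(\zeta) \propto e^{-|\zeta|}$, one expands $(1 - \partial_\xi^2)$ onto the above integral. Each resulting term is a convolution in $\xi$ against an $L^1_\zeta$ kernel (coming from $e^{-|\zeta|}$ or $|\zeta| e^{-|\zeta|}$), modulated by the bounded Fourier multipliers $\xi/\jap{\xi}$ or $1/\jap{\xi}^3$, or involving the factor $(\jap{\xi}-\jap{\eta})$ controlled by $|\xi-\eta|$; Young's inequality then furnishes the $L^2_\xi$ bound. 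The only subtle point is that $\partial_\xi^2 \widehat{\jap{x}^{-2}}$ carries a $\delta$-singularity at the origin, whose contribution to the integral is annihilated by the antisymmetric factor $(\jap{\xi} - \jap{\eta})$. I expect the main obstacle of the proof to be precisely this commutator analysis; the remainder is a clean reduction to Lemma~\ref{lem:weighted_derivative_decay}.
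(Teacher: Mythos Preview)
Your proof is correct and takes a different (though closely related) route from the paper's. The paper simply states that the proof is ``analogous to the proof of Lemma~\ref{lem:weighted_derivative_decay}'' and omits details---meaning one repeats the direct Fourier argument: write the operator on the Fourier side, use the identity $i\xi\, e^{it\jap{\xi}} = t^{-1}\jap{\xi}\,\partial_\xi e^{it\jap{\xi}}$ to integrate by parts once in $\xi$, and then appeal to the stationary-phase estimate of Lemma~\ref{lem:weighted_decay} for the remaining factor $t^{-1/2}$. The extra $\jap{\xi}$ that appears (as compared to Lemma~\ref{lem:weighted_derivative_decay}) is precisely what forces the input to be taken in $H^1_x$ rather than $L^2_x$.

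Your approach instead treats Lemma~\ref{lem:weighted_derivative_decay} as a black box and reduces to it by commuting $\jap{\nabla}$ through the weight. The commutator bound $\jap{x}^{2}[\jap{\nabla},\jap{x}^{-2}]\colon L^2_x\to L^2_x$ is correct; the verification you sketch via $\widehat{\jap{x}^{-2}}(\zeta)=c\,e^{-|\zeta|}$ works, and in fact the computation is cleaner than you indicate: in $(1-\partial_\xi^2)K(\xi,\eta)$ the two copies of $e^{-|\xi-\eta|}(\jap{\xi}-\jap{\eta})$ cancel exactly, leaving only $2\,\mathrm{sgn}(\xi-\eta)e^{-|\xi-\eta|}\,\xi/\jap{\xi}$ and $-e^{-|\xi-\eta|}/\jap{\xi}^3$, each of which is manifestly a bounded multiplier composed with an $L^1$-kernel convolution. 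Your handling of the $\delta$-contribution is also right: since $K$ and $\partial_\xi K$ are continuous in $\xi$ (the factor $\jap{\xi}-\jap{\eta}$ vanishes where $\mathrm{sgn}(\xi-\eta)$ jumps), no singular term survives in the distributional second derivative.

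Both arguments are short; yours is more modular and avoids reopening the stationary-phase computation, at the cost of the commutator analysis.
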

The proof of Lemma~\ref{lem:weighted_derivative_decay2} is analogous to the proof of Lemma~\ref{lem:weighted_derivative_decay}. We omit the details.

\section{Exterior Region} \label{sec:exterior_region}

In this section we use an argument of Klainerman~\cite{Kl93} to establish the exterior decay estimate~\eqref{equ:main_exterior_estimate} as well as energy bounds that will be needed later for the treatment of the interior region. We follow the presentation in Candy-Lindblad~\cite{CL18}.

\medskip

We first recall that the Klein-Gordon energy momentum tensor 
\begin{align*}
  Q_{\alpha \beta}[\phi] = 2 (\partial_\alpha \phi) (\partial_\beta \phi) - m_{\alpha \beta} \bigl( (\partial_\mu \phi) (\partial^\mu \phi) - \phi^2 \bigr)
\end{align*}
satisfies
\begin{align*}
 \partial^\alpha Q_{\alpha \beta}[\phi] = 2 \bigl( (\Box + 1) \phi \bigr) (\partial_\beta \phi).
\end{align*}
Here $m = \text{diag} (+1, -1)$ denotes the standard Lorentzian metric on $\bR^{1+1}$ and we are using the standard conventions for raising/lowering and summing indices.

For any $T \geq 1$ we consider the domain
\begin{equation*}
 \calD_T = \bigl\{ (t,x) \in \bbR^{1+1} \, : \, \jap{x} \geq t, 1 \leq t \leq T \bigr\}
\end{equation*}
with boundary
\begin{equation*}
 S_T = \bigl\{ (T,x) \in \bbR^{1+1} \, : \, \jap{x} \geq T \bigr\} \cup \bigl\{ (\jap{x}, x) \, \colon \, \jap{x} \leq T \bigr\}.
\end{equation*}
Then for $T \geq 1$ and a non-negative weight $\omega \geq 0$, we define the energy
\begin{equation*}
 E_{ext, T}(\phi, \omega) = \int_{S_T} n^\alpha[S_T] Q_{\alpha 0}[\phi] \omega \, \ud x,
\end{equation*}
where
\begin{equation*}
 n^\alpha[S_T] \partial_\alpha = \left\{ \begin{array}{ll}
                                     \partial_t \quad &\text{ on } \, \bigl\{ \jap{x} \geq T, t = T \bigr\}, \\
                                     \partial_t + \frac{x}{\jap{x}} \partial_x \quad &\text{ on } \, \bigl\{ \jap{x} = t, 1 \leq t \leq T \bigr\}.
                                    \end{array}
                            \right.
\end{equation*}
We have
\begin{equation*}
 n^\alpha[S_T] Q_{\alpha 0}[\phi] = \left\{ \begin{array}{ll}
                                        (\partial_t \phi)^2 + (\partial_x \phi)^2 + \phi^2 \quad &\text{ on } \, \jap{x} > T \text{ and } t = T, \\
                                        (\partial_t \phi)^2 + (\partial_x \phi)^2 + 2 (\partial_t \phi)(\partial_x \phi) {\textstyle \frac{x}{t}} + \phi^2 &\text{ on } \, \jap{x} = t \text{ and } t < T,
                                       \end{array}
			       \right.
\end{equation*}
and note the coercivity property
\begin{align*}
 (\partial_t \phi)^2 + (\partial_x \phi)^2 + 2 (\partial_t \phi)(\partial_x \phi) {\textstyle \frac{x}{t}} + \phi^2 &= (\px \phi + {\textstyle \frac{x}{t}} \pt \phi)^2 + (\pt \phi)^2 {\textstyle \frac{\rho^2}{t^2} } + \phi^2 \\
 &= (\pt \phi + {\textstyle \frac{x}{t}} \px \phi)^2 + (\px \phi)^2 {\textstyle \frac{\rho^2}{t^2} } + \phi^2.
\end{align*}
Then the following weighted energy estimate holds, see \cite[Theorem 3]{Kl93} and \cite[Lemma 2.1]{CL18}.
\begin{lemma} \label{lem:exterior_energy_estimate}
 Let $1 \leq T < \infty$, $N \in \bbN$, and for $0 \leq j \leq N$ define the weights
 \begin{equation*}
  \omega_j = (t+|x|)^{N-j} (|x|-t+1)^j.
 \end{equation*}
 Then we have
 \begin{equation*}
  \sum_{|I| \leq N} E_{ext, T}(\partial^I \phi, \omega_{|I|})^{\frac{1}{2}} \lesssim \sum_{|I| \leq N} E_{ext, 1}(\partial^I \phi, \omega_{|I|})^{\frac{1}{2}} + \sum_{|I| \leq N} \int_1^T \biggl( \int_{\jap{x} \geq t} \bigl( (\Box + 1) \partial^I \phi \bigr)^2 \omega_{|I|} \, \ud x \biggr)^{\frac{1}{2}} \, \ud t.
 \end{equation*}
\end{lemma}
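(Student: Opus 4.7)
The plan is to apply the standard vector-field multiplier method with the timelike multiplier $\partial_t$ and the non-negative weights $\omega_j$, following Klainerman~\cite{Kl93} and Candy-Lindblad~\cite{CL18}. Fix a multi-index $I$ with $|I| = j \leq N$, set $\phi_I := \partial^I \phi$, and consider the weighted energy current $P^\alpha := \omega_j Q^\alpha{}_0[\phi_I]$. Applying the divergence theorem to $P^\alpha$ on the spacetime region $\mathcal{D}_T$, whose boundary consists of the initial slab $\{t=1,\langle x\rangle\geq 1\}$, the terminal slab $\{t=T,\langle x\rangle\geq T\}$, and the lateral characteristic-like boundary $\{(\langle x\rangle, x) : 1\leq \langle x\rangle\leq T\}$ (with contributions at spatial infinity vanishing by a standard truncation-and-limit procedure), together with the identity $\partial^\alpha Q_{\alpha 0}[\phi_I] = 2((\Box+1)\phi_I)(\partial_t\phi_I)$, yields the basic identity
\begin{equation*}
 E_{ext,T}(\phi_I,\omega_j) = E_{ext,1}(\phi_I,\omega_j) + \int_{\mathcal{D}_T}\Bigl(2\omega_j((\Box+1)\phi_I)(\partial_t\phi_I) + Q^\alpha{}_0[\phi_I]\,\partial_\alpha\omega_j\Bigr)\,dx\,dt.
\end{equation*}

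The central task is to analyze the bulk term generated by differentiating the weight. Decomposing against the null frame $L := \partial_t + \mathrm{sgn}(x)\partial_x$, $\underline L := \partial_t - \mathrm{sgn}(x)\partial_x$, and viewing $\omega_j = u_+^{N-j}u_-^j$ as a function of $u_+ := t+|x|$ and $u_- := |x|-t+1$, a direct calculation gives
\begin{equation*}
 Q^\alpha{}_0[\phi_I]\,\partial_\alpha\omega_j = -\frac{j\,\omega_j}{u_-}(L\phi_I)^2 + \frac{(N-j)\,\omega_j}{u_+}(\underline L\phi_I)^2 + \phi_I^2\,\partial_t\omega_j.
\end{equation*}
The first term has the favorable sign and is simply discarded when establishing an upper bound. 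The remaining two terms must be dominated by the left-hand side. For the $(\underline L\phi_I)^2$ term, the key is that its bulk integral is absorbed by the lateral (incoming characteristic) portion of $E_{ext,T}(\phi_I,\omega_j)$, on which the coercive density $(\partial_t\phi_I + \tfrac{x}{t}\partial_x\phi_I)^2 + (\partial_x\phi_I)^2\tfrac{\rho^2}{t^2} + \phi_I^2$ weighted by $\omega_j$ controls precisely the quantity needed; the matching is verified by a coarea-style rearrangement of the spacetime integral along level sets of $u_-$, as in \cite[Section 3]{Kl93} and \cite[Lemma 2.1]{CL18}. The $\phi_I^2\partial_t\omega_j$ piece is treated analogously. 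For the forcing integral we apply Cauchy-Schwarz in $x$ and reduce to an estimate on the square-root quantity $E_{ext,t}^{1/2}(\phi_I,\omega_j)$. Summing over $|I|\leq N$ then yields the claimed inequality.

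The main technical obstacle I anticipate is the compensation argument: rigorously verifying that the positive bulk term $\int_{\mathcal{D}_T}(N-j)\omega_j(\underline L\phi_I)^2/u_+\,dx\,dt$ is absorbed by the lateral-boundary contribution to $E_{ext,T}(\phi_I,\omega_j)$, rather than generating uncontrolled polynomial-in-$T$ growth as a naive Gronwall application would suggest. This hinges on matching the null-frame weights on $\{\langle x\rangle = t\}$ with the bulk weight $\omega_j/u_+$ and is the heart of Klainerman's weighted exterior energy estimate. A secondary, minor issue is the non-smoothness of $|x|$ at the origin, handled by splitting the integration into $\{x>0\}$ and $\{x<0\}$ and observing that no boundary contribution arises at $x=0$.
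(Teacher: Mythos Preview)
The paper does not prove this lemma; it simply cites \cite[Theorem 3]{Kl93} and \cite[Lemma 2.1]{CL18}. Your divergence-theorem setup and the computation of the bulk term $Q^\alpha{}_0[\phi_I]\,\partial_\alpha\omega_j$ are correct, and you have correctly isolated the main obstacle.

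However, your proposed mechanism for handling the positive bulk term $(N-j)\,\omega_j u_+^{-1}(\underline L\phi_I)^2$ is not right. The lateral boundary $\{\langle x\rangle=t\}$ is asymptotically an \emph{outgoing} null hypersurface (essentially a level set of $u_-$), and the flux density there---which the paper writes as $(\partial_t\phi_I+\tfrac{x}{t}\partial_x\phi_I)^2+(\partial_x\phi_I)^2\tfrac{\rho^2}{t^2}+\phi_I^2$---controls $(L\phi_I)^2$ and $\phi_I^2$ but \emph{not} $(\underline L\phi_I)^2$. So the bad bulk term cannot be absorbed by the lateral-boundary contribution at the same order $j$; a coarea rearrangement along level sets of $u_-$ does not change this.

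The mechanism in Klainerman and Candy--Lindblad instead exploits the hierarchy in $j$. The key weight identity is $\omega_j/u_+=\omega_{j+1}/u_-$: the bad bulk term at order $j$ carries exactly the weight of the \emph{good} (negative) bulk term at order $j+1$. Since $\underline L\partial^I\phi$ with $|I|=j$ is a linear combination of $\partial^{I'}\phi$ with $|I'|=j+1$, the positive contribution $(N-j)\,\omega_{j+1}u_-^{-1}(\underline L\partial^I\phi)^2$ is dominated---after summing over multi-indices and inserting suitable $N$-dependent constants in front of each $E_{ext,T}(\partial^I\phi,\omega_{|I|})$---by the negative contribution $-(j+1)\,\omega_{j+1}u_-^{-1}(\partial^{I'}\phi)^2$ coming from level $j+1$. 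The $\phi_I^2\,\partial_t\omega_j$ piece is handled by the same inter-level coupling. This is what closes the estimate without polynomial growth in $T$; your proposal, as written, would only yield a Gronwall bound with a $T^{C}$ loss.
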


We now turn to deriving the exterior decay estimate~\eqref{equ:main_exterior_estimate} in Theorem~\ref{thm:main}. Let $u(t)$ be the solution to~\eqref{equ:nlkg} and recall the decomposition $u(t) = u_0(t) + u_1(t)$ from~\eqref{equ:decomposition_solution}. For any $T \geq 1$ and any $N \geq 2$, we define the exterior energy functional
\begin{equation*}
 \calE_{ext}(T) := \sum_{|I| \leq N}  E_{ext, T}(\partial^I u, \omega_{|I|}) + \sum_{0 \leq \ell \leq 1} \sum_{|I| \leq N} E_{ext, T}(\partial^I Z u_\ell, \omega_{|I|}).
\end{equation*}

Next we observe that if a function satisfies $E_{ext, T}(\phi, \omega_j) < \infty$ for $0 \leq j \leq N$, then an application of one-dimensional Sobolev embedding together with the fact that $\px \omega_j \lesssim \omega_j$ gives for any $\jap{x} \geq T$ and any $0 \leq j \leq N$ that
\begin{equation} \label{equ:pointwise_by_ext_energy}
 \bigl( \phi^2 \omega_j \bigr)(T, x) \lesssim E_{ext, T}(\phi, \omega_j).
\end{equation}
Using this pointwise estimate we compute that for any $T \geq 1$ and $\jap{x} \geq T$ there holds
\begin{align*}
 &\sum_{|I| \leq N} \bigl( \partial^I ( \beta_0 u^3 + \beta(x) u^3 ) \bigr)^2 \omega_{|I|} \\
 &\quad \lesssim_{\beta_0, \beta(x)} \sum_{|I| \leq N} w_{|I|} \sum_{|J_1| + |J_2| + |J_3| \leq |I|} (\partial^{J_1} u)^2 (\partial^{J_2} u)^2 (\partial^{J_3} u)^2 \\
 &\quad \lesssim \sum_{|I| \leq N}\sum_{|J_1| + |J_2| + |J_3| \leq |I|} \frac{\omega_{|I|}}{\omega_{|J_1|} \omega_{|J_2|} \omega_{|J_3|}}  E_{ext,T}(\partial^{J_1} u, \omega_{|J_1|}) E_{ext,T}(\partial^{J_2} u, \omega_{|J_2|}) (\partial^{J_3} u)^2 \omega_{|J_3|} \\
 &\quad \lesssim t^{-2N} \Bigl( \sup_{|I| \leq N} E_{ext, T}(\partial^I u, \omega_{|I|}) \Bigr)^2 \sum_{|I| \leq N} (\partial^I u)^2 \omega_{|I|}.
\end{align*}
Moreover, exploiting that in the exterior region $t \leq \jap{x}$, we find in a similar manner that
\begin{align*}
 &\sum_{|I| \leq N} \bigl( \partial^I Z ( \beta_0 u^3 ) \bigr)^2 \omega_{|I|} + \sum_{|I| \leq N} \bigl( \partial^I Z ( \beta(x) u^3 ) \bigr)^2 \omega_{|I|} \\
 &\quad \lesssim \Bigl( |\beta_0| + \bigl\| \jap{x} \jap{\nabla}^{N+1} \beta(x) \bigr\|_{L^\infty_x} \Bigr) \sum_{0 \leq k_1, k_2, k_3 \leq 1} \sum_{|I| \leq N} \omega_{|I|} \sum_{J_1 + J_2 + J_3 \leq I} (\partial^{J_1} Z^{k_1} u)^2 (\partial^{J_2} Z^{k_2} u)^2  (\partial^{J_3} Z^{k_3} u)^2 \\
 &\quad \lesssim t^{-2N} \biggl( \sup_{|I| \leq N} \, E_{ext, T}(\partial^I u, \omega_{|I|}) + \sup_{ \substack{ |I| \leq N \\ 0 \leq \ell \leq 2 } } \, E_{ext, T}(\partial^I Z u_{\ell}, \omega_{|I|}) \biggr)^2 \sum_{|I| \leq N} \Bigl( (\partial^I u)^2 + \sum_{0 \leq \ell \leq 2} (\partial^I Z u_\ell)^2 \Bigr) \omega_{|I|}.
\end{align*}
Combining the above estimates with the exterior energy estimate from Lemma~\ref{lem:exterior_energy_estimate} and the fact that $Z$~commutes with $\Box + 1$, we obtain for any $T \geq 1$~that
\begin{equation} \label{equ:goal_exterior_energy_estimate}
 \calE_{ext}(T)^{\frac{1}{2}} \lesssim \calE_{ext}(1)^{\frac{1}{2}} + \int_1^T t^{-N} \calE_{ext}(t)^{\frac{3}{2}} \, \ud t.
\end{equation}
By a standard continuity argument we infer from~\eqref{equ:goal_exterior_energy_estimate} the following.
\begin{proposition}[Exterior Energy Estimate] \label{prop:exterior_energy_estimate}
 Let $N \geq 2$. There exists a constant $\varepsilon_1 > 0$ such that if the initial data satisfy
 \begin{equation*}
  \varepsilon := \bigl\| \jap{x}^{1+\frac{N}{2}} f \bigr\|_{H^{N+2}_x} + \bigl\| \jap{x}^{1+\frac{N}{2}} g \bigr\|_{H^{N+1}_x} \leq \varepsilon_1,
 \end{equation*}
 then for every $T \geq 1$ we have
 \begin{equation} \label{equ:exterior_energy_estimate}
  \calE_{ext}(T) \lesssim \calE_{ext}(1) \lesssim \varepsilon^2.
 \end{equation}
 In particular, we conclude that in the exterior region $1 \leq t \leq \jap{x}$ it holds that
 \begin{equation} \label{equ:exterior_decay_estimate}
  |u(t,x)| \lesssim \jap{x}^{-\frac{N}{2}} \varepsilon.
 \end{equation}
\end{proposition}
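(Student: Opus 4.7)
The plan is a standard continuity (bootstrap) argument driven by the differential inequality \eqref{equ:goal_exterior_energy_estimate}, combined with the Sobolev-type pointwise estimate \eqref{equ:pointwise_by_ext_energy} to extract the pointwise decay \eqref{equ:exterior_decay_estimate}. I would proceed in three steps: first bound the initial energy $\calE_{ext}(1)$ by $\varepsilon^2$; next close a bootstrap on $F(T) := \calE_{ext}(T)^{1/2}$ using \eqref{equ:goal_exterior_energy_estimate}; and finally read off the pointwise bound.

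\emph{Step 1 (Initial energy).} At $t=1$ the boundary $S_1$ reduces to $\{t=1\}\times\bbR$, so $n^\alpha[S_1] Q_{\alpha 0}[\phi]$ is the Klein-Gordon energy density $(\partial_t \phi)^2 + (\partial_x \phi)^2 + \phi^2$, weighted by $\omega_{|I|}(1,x) \lesssim \jap{x}^N$. Using the equation \eqref{equ:nlkg} to trade $\partial_t$-derivatives at $t=1$ for spatial derivatives of $(f,g)$ (the cubic contributions being subdominant in $\varepsilon$), and writing $Z|_{t=1} = \partial_x + x\partial_t$ (and noting $u_1|_{t=1} = \partial_t u_1|_{t=1} = 0$ so that $Z u_1|_{t=1} = 0$), I expect to obtain
\begin{equation*}
 \calE_{ext}(1) \lesssim \bigl\| \jap{x}^{1+\frac{N}{2}} f \bigr\|_{H^{N+2}_x}^2 + \bigl\| \jap{x}^{1+\frac{N}{2}} g \bigr\|_{H^{N+1}_x}^2 \lesssim \varepsilon^2.
\end{equation*}
The weight $\jap{x}^{1+\frac{N}{2}}$ accounts for the factor $\jap{x}^{N/2}$ coming from $\omega_{|I|}(1,x)^{1/2}$ and an extra power of $\jap{x}$ absorbing the $x\,\partial_t$ factor in the Lorentz boost $Z$ applied to $u_0$.

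\emph{Step 2 (Bootstrap).} Set $F(T) := \calE_{ext}(T)^{1/2}$, which is continuous in $T$ by the local theory. Inequality \eqref{equ:goal_exterior_energy_estimate} reads
\begin{equation*}
 F(T) \leq C_0 F(1) + C_1 \int_1^T t^{-N} F(t)^3 \, \ud t.
\end{equation*}
I would adopt the bootstrap hypothesis $F(T) \leq 2 C_0 F(1)$ on a maximal interval $[1, T^\ast)$. Substituting and using $N \geq 2$, so that $\int_1^\infty t^{-N}\,\ud t \leq 1$, yields
\begin{equation*}
 F(T) \leq C_0 F(1) + 8 C_1 C_0^3 F(1)^3.
\end{equation*}
Choosing $\varepsilon_1$ so small that $8 C_1 C_0^2 F(1)^2 \leq \tfrac{1}{2}$ (which is possible since $F(1) \lesssim \varepsilon \leq \varepsilon_1$), the right-hand side is bounded by $\tfrac{3}{2} C_0 F(1)$, strictly improving the hypothesis. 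A standard continuity argument then forces $T^\ast = \infty$ and gives \eqref{equ:exterior_energy_estimate}.

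\emph{Step 3 (Pointwise decay).} Applying \eqref{equ:pointwise_by_ext_energy} with $\phi = u$ and $j = 0$: for $\jap{x} \geq T$,
\begin{equation*}
 \bigl( u^2 \omega_0 \bigr)(T,x) \lesssim E_{ext,T}(u, \omega_0) \leq \calE_{ext}(T) \lesssim \varepsilon^2.
\end{equation*}
Since $\omega_0(T,x) = (T+|x|)^N \simeq \jap{x}^N$ in the exterior region $T \leq \jap{x}$, this yields $|u(T,x)| \lesssim \varepsilon \jap{x}^{-N/2}$, i.e.\ \eqref{equ:exterior_decay_estimate}. The only mildly delicate bookkeeping is in Step~1, where one must carefully convert mixed space-time derivatives of $u$, $Zu_0$ at $t=1$ into the prescribed Sobolev-weighted norms of $f$ and $g$; everything else is a routine bootstrap, made possible precisely by the integrability $\int_1^\infty t^{-N}\,\ud t < \infty$ afforded by the hypothesis $N \geq 2$.
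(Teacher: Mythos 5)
Your proposal is correct and follows essentially the same route as the paper, which states the proposition immediately after deriving the differential inequality~\eqref{equ:goal_exterior_energy_estimate} and simply remarks ``by a standard continuity argument we infer\dots''. Your Steps 1--3 just flesh out what the paper leaves implicit: the initial-data bound $\calE_{ext}(1)\lesssim\varepsilon^2$ (using the equation and $Zu_1|_{t=1}=0$ to trade time derivatives, with the $x$-factor from $Z$ absorbed by the extra power of $\jap{x}$ in the data norm), the bootstrap closure using $\int_1^\infty t^{-N}\,\ud t\leq 1$ for $N\geq 2$, and the pointwise bound via~\eqref{equ:pointwise_by_ext_energy} with $\omega_0(T,x)\gtrsim\jap{x}^N$ on $\jap{x}\geq T$.
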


\section{Interior Region} \label{sec:interior_region}

In this section we derive the interior decay estimate~\eqref{equ:main_interior_estimate} for the solution $u(t)$ to~\eqref{equ:nlkg}.

\begin{proposition}[Interior Decay Estimate] \label{prop:interior_decay}
 Let $N \geq 2$. There exist constants $0 < \varepsilon_0 \ll 1$ and $C_0 \gg 1$ such that if the initial data $(f,g)$ satisfy
 \begin{equation} \label{equ:data_assumption_interior_decay_prop}
  \varepsilon := \bigl\| \jap{x}^{1+\frac{N}{2}} f \bigr\|_{H^{N+2}_x(\bbR)} + \bigl\| \jap{x}^{1+\frac{N}{2}} g \bigr\|_{H^{N+1}_x(\bbR)} \leq \varepsilon_0,
 \end{equation}
 then we have for the solution $u(t)$ to~\eqref{equ:nlkg} that
 \begin{equation} \label{equ:interior_decay_estimate}
  \sup_{t \geq 1} \, \sup_{\jap{x} \leq t} \, t^{\frac{1}{2}} |u(t,x)| \leq C_0 \varepsilon.
 \end{equation}
\end{proposition}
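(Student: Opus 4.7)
The plan is to work inside the interior region in hyperbolic coordinates $(\rho,y)$ and set $w:=\rho^{1/2}\cosh^{1/2}(y)\,u$, so that~\eqref{equ:interior_decay_estimate} is equivalent to the uniform bound $\|w\|_{L^\infty_\rho L^\infty_y}\lesssim\varepsilon$, with $w$ satisfying~\eqref{equ:nlkg_hyperbolic_coord}. I would run a continuity/bootstrap argument on
\[ M(R) := \sup_{1\leq \rho\leq R}\Bigl(\rho^{-\delta}\bigl(\|w(\rho)\|_{H^1_y}+\|\partial_\rho w(\rho)\|_{L^2_y}\bigr) + \|w(\rho)\|_{L^\infty_y}\Bigr) \]
with a small parameter $0<\delta\ll 1$, the goal being to upgrade the hypothesis $M(R)\leq C_0\varepsilon$ to $M(R)\leq \tfrac12 C_0\varepsilon$. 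The Cauchy data on the initial hyperboloid $H_1$ are furnished by the weighted norms of $(f,g)$ together with the exterior bound of Proposition~\ref{prop:exterior_energy_estimate}.

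\emph{From the slow $H^1_y$ bound to the $L^\infty_y$ bound.} Following the scheme of~\cite{LS05_1,LS05_2,LS15,Sterb16}, I would split $w=P_{\leq\rho^\sigma}w+P_{>\rho^\sigma}w$ with $\delta\ll\sigma\ll 1$. Bernstein together with the slow $H^1_y$ bound handles the high-frequency part, $\|P_{>\rho^\sigma}w\|_{L^\infty_y}\lesssim \rho^{\delta-\sigma/4}\varepsilon$. For the low-frequency part, the slow $H^1_y$ bound turns the terms $\rho^{-2}\partial_y^2 P_{\leq\rho^\sigma}w$, $\rho^{-2}\tanh(y)\partial_y P_{\leq\rho^\sigma}w$, $\rho^{-2}\cosh^{-2}(y) P_{\leq\rho^\sigma}w$, and the frequency-cutoff commutators into $L^2_y$-remainders that are integrable in $\rho$, leaving pointwise in $y$ the ODE
\[ (\partial_\rho^2 + 1) v = \frac{1}{\rho}\frac{\beta_0 + \beta(\rho\sinh(y))}{\cosh(y)}\,v^3 + \mathrm{err}(\rho,y). \]
Pairing with $\partial_\rho v$ shows that $|\partial_\rho v|^2+|v|^2$ is slowly varying: the constant-coefficient cubic produces only a bounded amplitude modulation, while the variable-coefficient piece is absolutely integrable in $\rho$ since the change of variable $y\mapsto \rho\sinh(y)$ contributes an additional $\rho^{-1}$ gain from the localization of $\beta$, cf.~\eqref{equ:Pk_beta_variable_Linfty}. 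This yields the desired $L^\infty_y$ bound on $P_{\leq\rho^\sigma}w$ once the energy bound is in hand.

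\emph{The slow-growth energy bound --- the crux.} The main obstacle is to prove $\|Zu(t)\|_{L^2_x}\lesssim\varepsilon t^{\delta}$, which via the change of variables supplies $\|w(\rho)\|_{H^1_y}\lesssim\varepsilon\rho^{\delta}$. Following Subsection~\ref{subsec:main_proof_ideas}, I would decompose $u=u_0+u_1$ as in~\eqref{equ:decomposition_solution}. Since $[Z,\Box+1]=0$ and the source for $u_0$ is the constant-coefficient $\beta_0 u^3$, a standard Klein--Gordon energy estimate with the $L^\infty$ bootstrap and Gr\"onwall gives $\|Zu_0(t)\|_{L^2_x}\lesssim\varepsilon t^{\delta}$. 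For $u_1$ the commutator $[Z,\beta(x)]u^3=t(\partial_x\beta)(x)u^3$ inserts a dangerous factor of $t$, and the energy identity reads
\[ \partial_t E(Zu_1) = 2\int_{\mathbb{R}} t(\partial_x\beta)(x)\,u^3\,(\partial_t Zu_1)\,dx + (\text{harmless terms}). \]
The strategy is to put $\jap{x}^{2}(\partial_x\beta)u^3$ into $L^2_x$ (of size $\lesssim\varepsilon^3 t^{-3/2}$ by the $L^\infty$ bootstrap and the Schwartz property of $\beta$), paired with $\jap{x}^{-2}\partial_t Zu_1$ in $L^2_x$. To obtain the weighted estimate
\[ \|\jap{x}^{-2}\partial_t Zu_1(t)\|_{L^2_x}\lesssim \varepsilon^3\jap{t}^{-1/2}, \]
I would write $u_1$ in Duhamel form with source $\beta(x)u^3$, decompose $\partial_t Zu_1 = \partial_x u_1 + t\,\partial_t\partial_x u_1 + x\,\partial_t^2 u_1$, and apply the pointwise-in-time local decay estimates of Lemmas~\ref{lem:weighted_decay}, \ref{lem:weighted_derivative_decay}, and~\ref{lem:weighted_derivative_decay2}: each piece gains $\jap{t-s}^{-1/2}$ or $\jap{t-s}^{-3/2}$ against a source $\|\jap{x}^{2}\beta(x)u^3(s)\|_{L^2_x}\lesssim \varepsilon^3\jap{s}^{-3/2}$, yielding the claimed $t^{-1/2}$ weighted decay. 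Feeding this back produces $|\partial_t E(Zu_1)|\lesssim \varepsilon^6/t$, which integrates only to a logarithm --- far less than the allowed $t^{2\delta}$ --- and closes the bootstrap, proving~\eqref{equ:interior_decay_estimate}.
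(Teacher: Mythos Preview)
Your proposal is correct and follows essentially the same route as the paper: bootstrap on $\|w\|_{L^\infty_\rho L^\infty_y}$, derive the slow $H^1_y$ growth by splitting $u=u_0+u_1$ and invoking the local decay Lemmas~\ref{lem:weighted_decay}--\ref{lem:weighted_derivative_decay2} to control $\|\jap{x}^{-2}\partial_t Zu_1\|_{L^2_x}$, then close via the ODE argument on $P_{\leq\rho^\sigma}w$. One small correction to the ODE step: your claim that the variable-coefficient contribution $\rho^{-1}\cosh^{-1}(y)\,\beta(\rho\sinh(y))\,v^3\partial_\rho v$ is absolutely integrable in $\rho$ fails at $y=0$, where it becomes $\rho^{-1}\beta(0)v^3\partial_\rho v$; the paper avoids this by incorporating the \emph{full} coefficient $\calB$ into the modified energy via a total derivative (see~\eqref{equ:nlkg_w_low_freq_proj_ODE_form_term1_rearranged}), which works uniformly in $y$ --- you should do the same rather than separating the constant and variable parts.
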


The proof of Proposition~\ref{prop:interior_decay} proceeds via the following bootstrap argument: There exist constants $0 < \varepsilon_0 \ll 1$ small enough and $C_0 \gg 1$ sufficiently large such that, given any $R \geq 1$ and any initial data satisfying~\eqref{equ:data_assumption_interior_decay_prop}, we may conclude that
\begin{align}
 \sup_{1 \leq \rho \leq R} \,\sup_{(t,x) \in H_\rho} \, t^{\frac{1}{2}} |u(t,x)| &\leq 2 C_0 \varepsilon \label{equ:interior_bootstrap_assumption} \\
 &\qquad \qquad \Longrightarrow \quad \sup_{1 \leq \rho \leq R} \sup_{(t,x) \in H_\rho} \, t^{\frac{1}{2}} |u(t,x)| \leq C_0 \varepsilon.   \label{equ:interior_bootstrap_conclusion}
\end{align}

\medskip

We will make use of several constants whose relations we now specify. We denote by $0 < \delta \ll 1$ a small absolute constant. Then we will choose $0 < \varepsilon_0 \ll 1$ sufficiently small and $C_0 \gg 1$ sufficiently large such that, in particular, it holds that
\begin{equation*}
 C_0 \varepsilon_0 \ll \delta \ll 1.
\end{equation*}
Moreover, we suppose that $\varepsilon_0 \leq \varepsilon_1$, which will allow us to invoke the estimates from Proposition~\ref{prop:exterior_energy_estimate} for the exterior region.

\medskip

In the course of the argument we consider for any $R \geq 1$ the domain
\begin{equation*}
 \calD^R = \bigl\{ (t,x) \in \bbR^{1+1} \, : \, t^2 - x^2 \leq R^2, t \geq 1 \bigr\}.
\end{equation*}
Furthermore, for any $R \geq 1$ and any $t \geq 1$ we define the fixed time-slice
\begin{equation*}
 \calS_t^R = \calD^R \cap \{ t \} \times \bbR
\end{equation*}
and denote by $\chi_{S_t^R}(x)$ a sharp cut-off function given by
\begin{equation*}
 \chi_{\calS_t^R}(x) = \left\{ \begin{array}{ll}
                                          1, \quad (t,x) \in \calS_t^R, \\
                                          0, \quad \text{otherwise}.
                                         \end{array} \right.
\end{equation*}

\subsection{Interior Energy Estimate} \label{subsec:interior_energy_estimate}

The goal of this subsection is to derive a slow growth estimate for the energy of one Lorentz boost of the solution to~\eqref{equ:nlkg} in the interior region, which we foliate in terms of the hyperboloids $H_\rho$, $\rho \geq 1$. The associated energies are then given by
\begin{equation*}
 E_{int, \rho}(\phi) := \int_{H_\rho} n^\alpha[H_\rho] Q_{\alpha 0}[\phi] \, \ud x = \int_{H_\rho} \bigl( (\partial_t \phi)^2 + (\partial_x \phi)^2 + 2 (\partial_t \phi) (\partial_x \phi) {\textstyle \frac{x}{t}} + \phi^2 \bigr) \, \ud x,
\end{equation*}
where the hyperboloid $H_\rho$ is parametrized by $x$. By \cite[Section 7.6]{H97} we have
\begin{equation*}
 \frac{\ud}{\ud \rho} E_{int, \rho}(\phi) = 2 \int_{H_\rho} \bigl( (\Box + 1) \phi \bigr) (\partial_t \phi) \frac{\rho}{t} \, \ud x.
\end{equation*}
Note that a computation gives
\begin{equation*}
 (\partial_t \phi)^2 + (\partial_x \phi)^2 + 2 (\partial_t \phi) (\partial_x \phi) {\textstyle \frac{x}{t}} + \phi^2 = {\textstyle \frac{\rho^2}{2 t^2}} \bigl( (\pt \phi)^2 + (\px \phi)^2 \bigr) + {\textstyle \frac{\rho^2}{2 t^2}} \bigl( (\partial_\rho \phi)^2 + {\textstyle \frac{1}{\rho^2}} (\partial_y \phi)^2 \bigr) + \phi^2.
\end{equation*}
Passing to hyperbolic coordinates it therefore holds that
\begin{equation*}
 E_{int, \rho}(\phi) = \int_{\bbR_y} \Bigl( {\textstyle \frac{1}{2}} \cosh^{-2}(y) \bigl( (\pt \phi)^2 + (\px \phi)^2 + (\partial_\rho \phi)^2 + {\textstyle \frac{1}{\rho^2}} (\partial_y \phi)^2 \bigr) + \phi^2 \Bigr) \, \rho \cosh(y) \, \ud y.
\end{equation*}

Let $u(t)$ be the solution to~\eqref{equ:nlkg} and recall the decomposition $u(t) = u_0(t) + u_1(t)$ from~\eqref{equ:decomposition_solution}. For any $\rho \geq 1$ we introduce the interior energy functional
\begin{equation}
 \calE_{int}(\rho) := E_{int, \rho}(u) + \sum_{0 \leq \ell \leq 1} E_{int, \rho}(Z u_\ell).
\end{equation}
In particular, note that $\calE_{int}(\rho)$ bounds the energy $E_{int, \rho}(Z u)$ of one Lorentz boost of the solution. The next proposition provides a slow growth estimate for this interior energy functional.
\begin{proposition}[Interior Energy Estimate] \label{prop:interior_energy_estimate}
 Let $N \geq 2$ and $R \geq 1$. Assume that the initial data satisfy
 \begin{equation*}
  \varepsilon := \bigl\| \jap{x}^{1+\frac{N}{2}} f \bigr\|_{H^{N+2}_x(\bbR)} + \bigl\| \jap{x}^{1+\frac{N}{2}} g \bigr\|_{H^{N+1}_x(\bbR)} \leq \varepsilon_0.
 \end{equation*}
 Moreover, suppose that the solution $u(t)$ to~\eqref{equ:nlkg} satisfies
 \begin{equation} \label{equ:growth_interior_energies_Linfty_assumption}
  \sup_{t \geq 1} \, t^{\frac{1}{2}} \bigl\| \chi_{\calS_t^R} u(t) \bigr\|_{L^\infty_x(\bbR)} \leq 2 C_0 \varepsilon.
 \end{equation}
 Then we have
 \begin{equation} \label{equ:interior_E_int_goal}
  \sup_{1 \leq \rho \leq R} \, \rho^{-2\delta} \calE_{int}(\rho) \lesssim \varepsilon^2,
 \end{equation}
 where the implicit constant is independent of $C_0$.
\end{proposition}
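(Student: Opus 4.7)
The plan is to work on the foliation by hyperboloids $H_\rho$ and control $\calE_{int}(\rho)$ via the energy identity
\[
\frac{d}{d\rho} E_{int,\rho}(\phi) = 2\int_{H_\rho} \bigl((\Box+1)\phi\bigr)(\partial_t \phi) \frac{\rho}{t} \, dx,
\]
together with the hyperbolic-coordinate expression for $E_{int,\rho}$. The decomposition $u = u_0 + u_1$ from~\eqref{equ:decomposition_solution} isolates the delicate contribution of the variable-coefficient nonlinearity. The baseline $\calE_{int}(1) \lesssim \varepsilon^2$ follows from \prop{prop:exterior_energy_estimate} since $H_1$ lies in the closure of the exterior region $\{t \leq \jap{x}\}$; combining the interior bootstrap~\eqref{equ:growth_interior_energies_Linfty_assumption} with the exterior decay~\eqref{equ:exterior_decay_estimate} yields the global pointwise bound $|u(t,x)| \lesssim \varepsilon \jap{t}^{-1/2}$ for $t \geq 1$, which I will use throughout.

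For $\phi = u$, I apply $(\Box+1)u = (\beta_0 + \beta(x))u^3$, insert $|u|^2 \lesssim \varepsilon^2/t$, pass to hyperbolic coordinates (so that $\tfrac{\rho}{t}\,dx = \rho\,dy$ on $H_\rho$), and use Cauchy--Schwarz against the coercive hyperboloidal energy density to obtain
\[
\left|\frac{d}{d\rho} E_{int,\rho}(u)\right| \lesssim \frac{\varepsilon^3}{\rho} E_{int,\rho}(u)^{1/2}.
\]
Since $Z$ commutes with $\Box + 1$, one has $(\Box+1)(Zu_0) = 3\beta_0 u^2\,Zu = 3\beta_0 u^2(Zu_0 + Zu_1)$, and the same procedure gives $|\tfrac{d}{d\rho} E_{int,\rho}(Zu_0)| \lesssim \tfrac{\varepsilon^2}{\rho}\,\calE_{int}(\rho)$.

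The main step is the control of $E_{int,\rho}(Zu_1)$. Since $Z = t\partial_x + x\partial_t$ and $\beta$ depends only on $x$,
\[
(\Box+1)(Zu_1) = Z\bigl(\beta(x)u^3\bigr) = t\,\beta'(x)\,u^3 + 3\beta(x)\,u^2\,Zu,
\]
so the dangerous contribution to the flux integral is $2\rho \int_{\bbR} \beta'(x)\,u^3\,\partial_t Zu_1\,dx$ (evaluated on $H_\rho$ at $t = \sqrt{\rho^2+x^2}$); the second piece is handled as for $Zu_0$. The key to defeating the factor of $\rho$ is the weighted local-decay bound
\[
\|\jap{x}^{-2}\,\partial_t Zu_1(t)\|_{L^2_x(\bbR)} \lesssim \varepsilon^3\,\jap{t}^{-1/2}, \qquad t \geq 1,
\]
which I will derive by expanding $\partial_t Zu_1 = \partial_x u_1 + t\,\partial_x\partial_t u_1 + x\,\partial_t^2 u_1$, replacing $\partial_t^2 u_1$ via the equation of motion $\partial_t^2 u_1 = \partial_x^2 u_1 - u_1 + \beta(x) u^3$, writing the remaining derivatives of $u_1$ in Duhamel form with forcing $\beta(x)u^3$ and zero data at $t=1$, and invoking the propagator estimates of \lem{lem:weighted_decay}--\lem{lem:weighted_derivative_decay2}. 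Since $\jap{x}^2\beta(x)$ is Schwartz, it freely absorbs a $\jap{x}^{-2}$ weight on the input side; the forcing then satisfies $\|\jap{x}^2\beta(x)u^3(s)\|_{H^1_x(\bbR)} \lesssim \varepsilon^3\,s^{-3/2}$, where the $H^1$ bound controls $\partial_x u$ locally on the support of $\beta$ via the exterior energy estimate and the hyperboloidal energy. The integrable $\jap{t-s}^{-3/2}$ decay from Lemmas~\ref{lem:weighted_derivative_decay}--\ref{lem:weighted_derivative_decay2} is precisely what compensates the factor of $t$ in $t\,\partial_x\partial_t u_1$, producing the claimed $\jap{t}^{-1/2}$ rate.

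With the weighted bound in hand, Cauchy--Schwarz (using $\|\jap{x}^2\beta'(x)\|_{L^2_x(\bbR)} \lesssim 1$ and the observation that $t \simeq \rho$ on the effective support of $\beta'$) converts the flux integral into
\[
\left|2\rho \int_{\bbR} \beta'(x)\,u^3\,\partial_t Zu_1\,dx\right| \lesssim \rho \cdot \frac{\varepsilon^3}{\rho^{3/2}} \cdot \frac{\varepsilon^3}{\rho^{1/2}} = \frac{\varepsilon^6}{\rho}.
\]
Summing the three contributions and applying Young's inequality to absorb $\varepsilon^3 \calE_{int}(\rho)^{1/2}/\rho$ gives
\[
\left|\frac{d}{d\rho}\calE_{int}(\rho)\right| \lesssim \frac{\varepsilon^2}{\rho}\,\calE_{int}(\rho) + \frac{\varepsilon^4}{\rho},
\]
and Gronwall's inequality starting from $\calE_{int}(1) \lesssim \varepsilon^2$, under the smallness condition $C\varepsilon_0^2 \leq 2\delta$, delivers the desired $\calE_{int}(\rho) \lesssim \varepsilon^2 \rho^{2\delta}$. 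The principal obstacle, and the novel point of the proof, is the weighted $L^2_x$ bound for $\partial_t Zu_1$: one must verify that after expanding $\partial_t Zu_1$ and eliminating $\partial_t^2 u_1$ via the equation, each resulting piece admits a Duhamel representation compatible with Lemmas~\ref{lem:weighted_decay}--\ref{lem:weighted_derivative_decay2}, and in particular that the full $t$-multiplier attached to $\partial_x\partial_t u_1$ is absorbed by the improved $\jap{t-s}^{-3/2}$ decay rather than destroying the integrability of the Duhamel integral.
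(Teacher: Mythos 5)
Your overall strategy matches the paper's exactly: decompose $u = u_0 + u_1$, track $E_{int,\rho}(u)$, $E_{int,\rho}(Zu_0)$, $E_{int,\rho}(Zu_1)$ separately, isolate the dangerous term $t\,\beta'(x)u^3$ in $(\Box+1)(Zu_1)$, defeat it via a weighted local-decay bound on $\partial_t Zu_1$ derived from the Duhamel formula and Lemmas~\ref{lem:weighted_decay}--\ref{lem:weighted_derivative_decay2}, and close with Gronwall from the baseline $\calE_{int}(1) \lesssim \varepsilon^2$. However, there are two genuine gaps in the execution.

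First, the claimed weighted bound $\|\jap{x}^{-2}\partial_t Zu_1(t)\|_{L^2_x}\lesssim\varepsilon^3\jap{t}^{-1/2}$ and the forcing bound $\|\jap{x}^2\beta(x)u^3(s)\|_{H^1_x}\lesssim\varepsilon^3 s^{-3/2}$ are stated without justification of the $\partial_x u$ factor, and they are in fact too strong. Controlling $\partial_x u$ near the support of $\beta$ cannot be done directly from the $L^\infty$ hypothesis~\eqref{equ:growth_interior_energies_Linfty_assumption}, and invoking ``the hyperboloidal energy'' risks circularity since $\calE_{int}$ is the object being estimated. The paper's route is to write $\partial_x u = t^{-1}(Zu - x\,\partial_t u)$ and establish the auxiliary Gronwall growth bounds~\eqref{equ:growth_bound_time_derivative} and~\eqref{equ:growth_bound_bad_Z}, $\|\chi_{\calS_t^R}\partial_t u(t)\|_{L^2_x}\lesssim\varepsilon t^{C(2C_0\varepsilon)^2}$ and $\|\chi_{\calS_t^R}Zu(t)\|_{L^2_x}\lesssim\varepsilon t^{1/2+C(2C_0\varepsilon)^2}$, from which the resulting weighted bounds for $\partial_x^2 u_1$ and $\partial_x\partial_t u_1$ inherit a small loss $t^{C(2C_0\varepsilon)^2}$ (see~\eqref{equ:weighted_bound_dx2_u1}--\eqref{equ:weighted_bound_dx_dt_u1}). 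This loss is intrinsic to the method; your proposal suppresses it, and the compensating smallness $C_0\varepsilon_0\ll\delta$ that makes the Gronwall step close is not mentioned.

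Second, the Cauchy--Schwarz step applying the weighted bound directly on $H_\rho$ does not go through: $\|\jap{x}^{-2}\partial_t Zu_1(t)\|_{L^2_x}$ is a fixed-$t$ quantity, whereas along $H_\rho$ the time $t(x)=\sqrt{\rho^2+x^2}$ varies with $x$, so $\int_{H_\rho}\jap{x}^{-4}|\partial_t Zu_1(t(x),x)|^2\,dx$ is not bounded by the fixed-$t$ norm, and ``$t\simeq\rho$ on the effective support of $\beta'$'' is a heuristic, not an estimate. The paper resolves this by the change of variables $\rho\,dx\,d\rho=t\,dx\,dt$ in~\eqref{equ:E_int_Z_u1_difficult}, after which the flux is a genuine $t$-integral to which the fixed-$t$ weighted bounds apply. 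Crucially, the resulting $(t,x)$-domain is $\{1\leq t^2-x^2\leq\tau^2\}$, which extends to $t>\tau$, and the tail $\int_\tau^\infty t^{-1+C\varepsilon^2}\,dt$ diverges; the paper salvages this by exploiting the shrinking spatial support $\|\chi_{\calS_t^\tau\setminus\calS_t^1}\|_{L^p_x}\lesssim(\tau^2/\sqrt{t^2-1})^{1/p}$ with $p$ chosen large depending on $\varepsilon$. Your proposal omits this treatment of the $t>\tau$ tail entirely, and as written the flux estimate would not close.
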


A key ingredient for the proof of Proposition~\ref{prop:interior_energy_estimate} are weighted energy estimates for the component $u_1(t)$ of the solution $u(t)$ to~\eqref{equ:nlkg}. Their proof combines the pointwise-in-time local decay estimates for the Klein-Gordon propagator from Subsection~\ref{subsec:weighted_L2_decay} with the spatial localization provided by the coefficient $\beta(x)$ of the nonlinearity for $(\Box + 1) u_1$.

\begin{lemma}[Weighted Energy Estimates for $u_1(t)$]  \label{lem:weighted_bounds_u1}
 Let $R \geq 1$ and suppose that the assumptions of Proposition~\ref{prop:interior_energy_estimate} are in place.
 Then we have uniformly for all $t \geq 1$ that
 \begin{align}
  \bigl\| \jap{x}^{-2} \chi_{\calS_t^R} u_1(t) \bigr\|_{L^2_x(\bbR)} &\lesssim \frac{(2 C_0 \varepsilon)^3}{t^{\frac{1}{2}}}, \label{equ:weighted_bound_u1} \\
  \bigl\| \jap{x}^{-2} \chi_{\calS_t^R} \partial_x u_1(t) \bigr\|_{L^2_x(\bbR)} &\lesssim \frac{(2 C_0 \varepsilon)^3}{t^{\frac{3}{2}}}, \label{equ:weighted_bound_dx_u1} \\
  \bigl\| \jap{x}^{-2} \chi_{\calS_t^R} \partial_x^2 u_1(t) \bigr\|_{L^2_x(\bbR)} &\lesssim \frac{(2 C_0 \varepsilon)^3}{t^{\frac{3}{2} - C(2 C_0 \varepsilon)^2}}, \label{equ:weighted_bound_dx2_u1} \\
  \bigl\| \jap{x}^{-2} \chi_{\calS_t^R} \partial_x \partial_t u_1(t) \bigr\|_{L^2_x(\bbR)} &\lesssim \frac{(2 C_0 \varepsilon)^3}{t^{\frac{3}{2} - C(2 C_0 \varepsilon)^2}}, \label{equ:weighted_bound_dx_dt_u1}
 \end{align}
 for some absolute constant $C > 0$.
\end{lemma}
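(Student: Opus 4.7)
Since $u_1$ has vanishing initial data at $t=1$, Duhamel's formula gives
\[
 u_1(t) = \int_1^t \frac{\sin((t-s)\jap{\nabla})}{\jap{\nabla}}\bigl(\beta(x) u(s)^3\bigr) \, ds.
\]
The central idea is to factor the spatially localized source as $\beta(x) u(s)^3 = \jap{x}^{-2}\bigl(\jap{x}^2 \beta(x) u(s)^3\bigr)$, using that $\jap{x}^2 \beta(x)$ remains Schwartz, so that the weights $\jap{x}^{-2}$ are in place on both sides of the propagator and the pointwise-in-time local decay estimates of Subsection~\ref{subsec:weighted_L2_decay} apply. The nonlinear source is then controlled from the bootstrap hypothesis~\eqref{equ:growth_interior_energies_Linfty_assumption} together with the exterior decay estimate~\eqref{equ:exterior_decay_estimate}: since $\beta$ is Schwartz, its effective support is bounded and therefore contained in the interior region $\calS_s^R$ for all sufficiently large $s$, yielding
\[
 \|u(s)\|_{L^\infty_x(\mathrm{supp}\,\beta)} \lesssim \frac{2 C_0 \varepsilon}{s^{1/2}}, \qquad \bigl\|\jap{x}^2 \beta(x) u(s)^3\bigr\|_{L^2_x} \lesssim \frac{(2 C_0 \varepsilon)^3}{s^{3/2}}.
\]

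For~\eqref{equ:weighted_bound_u1} I would apply Lemma~\ref{lem:weighted_decay} with $a=2$, $b=1$ to get propagator norm $\jap{t-s}^{-1/2}$, then split the time integral into $[1,t/2]$ and $[t/2,t]$ to obtain the claimed $t^{-1/2}$ decay. Estimate~\eqref{equ:weighted_bound_dx_u1} is identical in structure but uses Lemma~\ref{lem:weighted_derivative_decay}, improving the propagator norm to $\jap{t-s}^{-3/2}$ and yielding $t^{-3/2}$ directly after the same splitting.

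For the second-derivative estimates~\eqref{equ:weighted_bound_dx2_u1}--\eqref{equ:weighted_bound_dx_dt_u1} I would commute one factor of $\partial_x$ through the Fourier-multiplier propagator onto the source, so that Lemmas~\ref{lem:weighted_derivative_decay}--\ref{lem:weighted_derivative_decay2} again supply the kernel decay $\jap{t-s}^{-3/2}$. The new source to be controlled is
\[
 \partial_x\bigl(\beta(x) u(s)^3\bigr) = \beta'(x) u(s)^3 + 3\beta(x) u(s)^2 \partial_x u(s);
\]
the first summand is handled as before, while the second requires a slow-growth energy bound
\[
 \|\partial u(s)\|_{L^2_x} \lesssim \varepsilon \, s^{C(2 C_0 \varepsilon)^2}
\]
obtained from Gr\"onwall applied to the standard Klein-Gordon energy $E(u)$ under the bootstrap hypothesis (the fact that $\|u(s)\|_{L^\infty}^2 \lesssim (2C_0\varepsilon)^2 s^{-1}$ is borderline non-integrable is precisely what produces the polynomial growth exponent $C(2C_0\varepsilon)^2$). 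Combining this bound with the localized pointwise decay of $u$ produces a source estimate of the form $(2C_0\varepsilon)^3 s^{-1+C(2C_0\varepsilon)^2}$ (up to the better-behaved $\beta'$ contribution), and, after splitting the time integral against the $\jap{t-s}^{-3/2}$ kernel, yields the claimed rate $t^{-(3/2-C(2C_0\varepsilon)^2)}$.

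\textbf{Main obstacle.} The crux of the argument lies in~\eqref{equ:weighted_bound_dx2_u1}--\eqref{equ:weighted_bound_dx_dt_u1}: because $\partial_x u$ does not decay pointwise in time---only a slowly growing $L^2_x$-bound is available---one must exploit the spatial localization of $\beta(x)$ in tandem with the sharp kernel decay $\jap{t-s}^{-3/2}$ to absorb this loss into the small exponent correction $C(2C_0\varepsilon)^2$. Removing either the localization of $\beta$ or the pointwise-in-time local decay estimates of Lemmas~\ref{lem:weighted_derivative_decay}--\ref{lem:weighted_derivative_decay2} would break the argument entirely; this synergy is exactly what the paper's novel approach is designed to exploit in place of a variable-coefficient normal form.
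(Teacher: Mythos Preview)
Your plan for~\eqref{equ:weighted_bound_u1} and~\eqref{equ:weighted_bound_dx_u1} is essentially the paper's argument (the paper inserts $\chi_{\calS_s^R}$ inside the Duhamel integral via finite speed of propagation rather than invoking the exterior decay, but this is cosmetic).

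There is, however, a genuine gap in your treatment of~\eqref{equ:weighted_bound_dx2_u1}--\eqref{equ:weighted_bound_dx_dt_u1}. After commuting one $\partial_x$ onto the source you estimate
\[
 \bigl\| \jap{x}^2 \beta(x)\, u(s)^2\, \partial_x u(s) \bigr\|_{L^2_x}
 \lesssim \|u(s)\|_{L^\infty_x}^2 \, \|\partial_x u(s)\|_{L^2_x}
 \lesssim \frac{(2C_0\varepsilon)^2}{s}\cdot \varepsilon\, s^{C(2C_0\varepsilon)^2}
 = (2C_0\varepsilon)^2\varepsilon\, s^{-1+C(2C_0\varepsilon)^2},
\]
and then claim that convolving this against the kernel $\jap{t-s}^{-3/2}$ yields $t^{-3/2+C(2C_0\varepsilon)^2}$. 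It does not: on the interval $[t/2,t]$ one has
\[
 \int_{t/2}^t \frac{1}{\jap{t-s}^{3/2}}\, s^{-1+C(2C_0\varepsilon)^2}\, ds
 \;\sim\; t^{-1+C(2C_0\varepsilon)^2},
\]
which is half a power short of the claimed rate. A source that decays only like $s^{-1}$ cannot produce output decay faster than $t^{-1}$ against an integrable kernel.

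The paper recovers the missing half power by rewriting the derivative as
\[
 \partial_x u(s) = \frac{1}{s}\bigl(Zu(s) - x\,\partial_t u(s)\bigr),
\]
absorbing the factor of $x$ into the Schwartz coefficient $\beta$, and then using two auxiliary Gronwall bounds derived under the bootstrap hypothesis:
\[
 \|\chi_{\calS_s^R}\partial_t u(s)\|_{L^2_x} \lesssim \varepsilon\, s^{C(2C_0\varepsilon)^2},
 \qquad
 \|\chi_{\calS_s^R} Zu(s)\|_{L^2_x} \lesssim \varepsilon\, s^{\frac12+C(2C_0\varepsilon)^2}.
\]
The second of these costs an extra $s^{1/2}$, but the explicit prefactor $1/s$ from the identity overcompensates, so the net source decay is $s^{-3/2+C(2C_0\varepsilon)^2}$, which \emph{does} convolve to $t^{-3/2+C(2C_0\varepsilon)^2}$. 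The Lorentz-boost identity is the missing ingredient in your proposal; without it the argument for~\eqref{equ:weighted_bound_dx2_u1}--\eqref{equ:weighted_bound_dx_dt_u1} does not close.
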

\begin{proof}
 In order to establish the weighted energy estimates~\eqref{equ:weighted_bound_u1}--\eqref{equ:weighted_bound_dx_dt_u1}, it is helpful to represent the component~$u_1(t)$ in Duhamel form as
 \begin{equation*}
  u_1(t) = \int_1^t U(t-s) \beta(x) u(s)^3 \, \ud s,
 \end{equation*}
 where we use the short-hand notation 
 \[
  U(t-s) := \frac{\sin((t-s)\jap{\nabla})}{\jap{\nabla}}.
 \]
 Then by finite speed of propagation for the Klein-Gordon equation we may write
 \begin{equation*}
  \chi_{\calS_t^R} u_1(t) = \chi_{\calS_t^R} \int_1^t U(t-s) \beta(x) \chi_{\calS_s^R} u(s)^3 \, \ud s.
 \end{equation*}
 Hence, the pointwise in time local decay estimate~\eqref{equ:weighted_decay} for the Klein-Gordon propagator and the decay assumption~\eqref{equ:growth_interior_energies_Linfty_assumption} imply uniformly for all $t \geq 1$ that
 \begin{align*}
  \bigl\| \jap{x}^{-2} \chi_{\calS_t^R} u_1(t) \bigr\|_{L^2_x(\bbR)} &\lesssim \int_1^t \bigl\| \jap{x}^{-2} U(t-s) \jap{x}^{-2} \bigr\|_{L^2_x \to L^2_x} \bigl\| \jap{x}^2 \beta(x) \bigr\|_{L^2_x} \bigl\| \chi_{\calS_s^R} u(s) \bigr\|_{L^\infty_x}^3 \, \ud s \\
  &\lesssim_{\beta(x)} \int_1^t \frac{1}{\jap{t-s}^{\frac{1}{2}}} \frac{(2 C_0 \varepsilon)^3}{\jap{s}^{\frac{3}{2}}} \, \ud s \\
  &\lesssim \frac{(2C_0\varepsilon)^3}{\jap{t}^{\frac{1}{2}}}.
 \end{align*}
 This gives the bound~\eqref{equ:weighted_bound_u1}. The proof of~\eqref{equ:weighted_bound_dx_u1} proceeds similarly. By finite speed of propagation we may write
 \begin{equation*}
  \chi_{\calS_t^R} \px u_1(t) = \chi_{\calS_t^R} \int_1^t \px U(t-s) \beta(x) \chi_{\calS_s^R} u(s)^3 \, \ud s.
 \end{equation*}
 Hence, by the pointwise in time local decay estimate~\eqref{equ:weighted_derivative_decay} and by~\eqref{equ:growth_interior_energies_Linfty_assumption}, we obtain for all $t \geq 1$ that
 \begin{align*}
  \bigl\| \jap{x}^{-2} \chi_{\calS_t^R} \px u_1(t) \bigr\|_{L^2_x} &\lesssim \int_1^t \bigl\| \jap{x}^{-2} \px U(t-s) \jap{x}^{-2} \bigr\|_{L^2_x \to L^2_x} \bigl\| \jap{x}^2 \beta(x) \bigr\|_{L^2_x} \bigl\| \chi_{\calS_s^R} u(s) \bigr\|_{L^\infty_x}^3 \, \ud s \\
  &\lesssim_{\beta(x)} \int_1^t \frac{1}{\jap{t-s}^{\frac{3}{2}}} \frac{(2 C_0 \varepsilon)^3}{\jap{s}^{\frac{3}{2}}} \, \ud s \\
  &\lesssim_{\beta(x)} \frac{(2C_0\varepsilon)^3}{\jap{t}^{\frac{3}{2}}}.
 \end{align*}

 For the proofs of the bounds~\eqref{equ:weighted_bound_dx2_u1} and~\eqref{equ:weighted_bound_dx_dt_u1} we first need to derive some auxiliary estimates. In view of the equation
 \begin{align*}
  (\Box + 1) (\pt u) = 3 (\beta_0 + \beta(x)) (\pt u) u^2,
 \end{align*}
 using finite speed of propagation and \eqref{equ:growth_interior_energies_Linfty_assumption} again, we conclude for all $t \geq 1$ that
 \begin{align*}
  \bigl\| \chi_{\calS_t^R} (\pt u)(t) \bigr\|_{L^2_x(\bbR)} &\lesssim \| (\pt u)(1) \|_{L^2_x(\bbR)} + \int_1^t \| \chi_{\calS_s^R} (\pt u)(s) \|_{L^2_x} \| \chi_{\calS_s^R} u(s)\|_{L^\infty_x}^2 \, \ud s \\
  &\lesssim \varepsilon + \int_1^t \frac{(2C_0\varepsilon)^2}{s} \| \chi_{\calS_s^R} (\pt u)(s) \|_{L^2_x} \, \ud s.
 \end{align*}
 By Gronwall's inequality it then follows for all $t \geq 1$ that
 \begin{equation} \label{equ:growth_bound_time_derivative}
  \bigl\| \chi_{\calS_t^R} (\pt u)(t) \bigr\|_{L^2_x(\bbR)} \lesssim \varepsilon t^{C (2C_0\varepsilon)^2},
 \end{equation}
 where $C > 0$ is some absolute constant. Similarly, from the equation
 \begin{align*}
  (\Box + 1) (Z u) = 3 (\beta_0 + \beta(x)) (Zu) u^2 + t (\partial_x \beta)(x) u^3,
 \end{align*}
 we infer for all $t \geq 1$ that
 \begin{align*}
  \bigl\| \chi_{\calS_t^R} (Z u)(t) \bigr\|_{L^2_x(\bbR)} &\lesssim \|Zu(1)\|_{L^2_x(\bbR)} + \int_1^t \bigl\| \chi_{\calS_s^R} Zu(s) \bigr\|_{L^2_x(\bbR)} \bigl\| \chi_{\calS_s^R} u(s) \bigr\|_{L^\infty_x(\bbR)}^2 \, \ud s \\
  &\quad \quad + \int_1^t s \| \partial_x \beta(x) \|_{L^2_x(\bbR)} \bigl\| \chi_{\calS_s^R} u(s) \bigr\|_{L^\infty_x(\bbR)}^3 \, \ud s \\
  &\lesssim \varepsilon + \int_1^t \frac{(2C_0\varepsilon)^2}{s} \bigl\| \chi_{\calS_s^R} Zu(s) \bigr\|_{L^2_x(\bbR)} \, \ud s + \int_1^t \frac{(2C_0\varepsilon)^3}{s^{\frac{1}{2}}} \, \ud s \\
  &\lesssim \varepsilon + (2C_0\varepsilon)^3 t^{\frac{1}{2}} + \int_1^t \frac{(2C_0\varepsilon)^2}{s} \bigl\| \chi_{\calS_s^R} Zu(s) \bigr\|_{L^2_x(\bbR)} \, \ud s.
 \end{align*}
 Hence, by Gronwall's inequality, we obtain for all $t \geq 1$ that 
 \begin{equation} \label{equ:growth_bound_bad_Z}
  \bigl\| \chi_{\calS_t^R} (Z u)(t) \bigr\|_{L^2_x(\bbR)} \lesssim \varepsilon t^{\frac{1}{2} + C (2C_0\varepsilon)^2}.
 \end{equation}

 We now turn to proving the bound~\eqref{equ:weighted_bound_dx2_u1}. To this end we will use that for any $t > 0$ we may write
 \begin{align*}
  \px u(t) = \frac{1}{t} t \px u = \frac{1}{t} \bigl( t \px u + x \pt u - x \pt u \bigr) = \frac{1}{t} \bigl( Zu - x \pt u \bigr).
 \end{align*}
 From the equation
 \begin{align*}
  \partial_x^2 u_1(t) = \int_1^t \px U(t-s) \bigl( (\px \beta)(x) u(s)^3 + 3 \beta(x) (\px u)(s) u(s)^2 \bigr) \, \ud s
 \end{align*}
 we then conclude for all $t \geq 1$ that
 \begin{align*}
  &\bigl\| \jap{x}^{-2} \chi_{\calS_t^R} \px^2 u_1(t) \bigr\|_{L^2_x(\bbR)} \\
  &\quad \lesssim \int_1^t \bigl\| \jap{x}^{-2} \px U(t-s) \jap{x}^{-2} \bigr\|_{L^2_x \to L^2_x} \bigl\| \jap{x}^2 (\px \beta)(x) \bigr\|_{L^2_x(\bbR)} \bigl\| \chi_{\calS_s^R} u(s) \bigr\|_{L^\infty_x(\bbR)}^3 \, \ud s \\
  &\quad \quad + \int_1^t \bigl\| \jap{x}^{-2} \px U(t-s) \jap{x}^{-2} \bigr\|_{L^2_x \to L^2_x} \bigl\| \jap{x}^2 \beta(x) \chi_{\calS_s^R} (\px u)(s) \bigr\|_{L^2_x(\bbR)} \bigl\| \chi_{\calS_s^R} u(s) \bigr\|_{L^\infty_x(\bbR)}^2 \, \ud s \\
  &\quad \lesssim \int_1^t \bigl\| \jap{x}^{-2} \px U(t-s) \jap{x}^{-2} \bigr\|_{L^2_x \to L^2_x} \bigl\| \jap{x}^2 (\px \beta)(x) \bigr\|_{L^2_x(\bbR)} \bigl\| \chi_{\calS_s^R} u(s) \bigr\|_{L^\infty_x(\bbR)}^3 \, \ud s \\
  &\quad \quad + \int_1^t \bigl\| \jap{x}^{-2} \px U(t-s) \jap{x}^{-2} \bigr\|_{L^2_x \to L^2_x} \frac{1}{s} \bigl\| \jap{x}^2 \beta(x) \bigr\|_{L^\infty_x(\bbR)} \bigl\| \chi_{\calS_s^R} (Z u)(s) \bigr\|_{L^2_x(\bbR)} \bigl\| \chi_{\calS_s^R} u(s) \bigr\|_{L^\infty_x(\bbR)}^2 \, \ud s \\
  &\quad \quad + \int_1^t \bigl\| \jap{x}^{-2} \px U(t-s) \jap{x}^{-2} \bigr\|_{L^2_x \to L^2_x} \frac{1}{s} \bigl\| \jap{x}^3 \beta(x) \bigr\|_{L^\infty_x(\bbR)} \bigl\| \chi_{\calS_s^R} (\pt u)(s) \bigr\|_{L^2_x(\bbR)} \bigl\| \chi_{\calS_s^R} u(s) \bigr\|_{L^\infty_x(\bbR)}^2 \, \ud s.
 \end{align*}
 Thus, by the pointwise in time local decay estimate~\eqref{equ:weighted_derivative_decay}, by~\eqref{equ:growth_interior_energies_Linfty_assumption}, and by the growth bounds~\eqref{equ:growth_bound_time_derivative}--\eqref{equ:growth_bound_bad_Z}, we obtain for all $t \geq 1$ that
 \begin{align*}
  \bigl\| \jap{x}^{-2} \chi_{\calS_t^R} \px^2 u_1(t) \bigr\|_{L^2_x(\bbR)} &\lesssim \int_1^t \frac{1}{\jap{t-s}^{\frac{3}{2}}} \frac{(2C_0\varepsilon)^3}{s^{\frac{3}{2}}} \, \ud s + \int_1^t \frac{1}{\jap{t-s}^{\frac{3}{2}}} \frac{1}{s} \varepsilon s^{\frac{1}{2} + C (2C_0\varepsilon)^2} \frac{(2C_0\varepsilon)^2}{s} \, \ud s \\
  &\quad \quad + \int_1^t \frac{1}{\jap{t-s}^{\frac{3}{2}}} \frac{1}{s} \varepsilon s^{C (2C_0\varepsilon)^2} \frac{(2C_0\varepsilon)^2}{s} \, \ud s \\
  &\lesssim (2C_0\varepsilon)^3 \int_1^t \frac{1}{\jap{t-s}^{\frac{3}{2}}} \frac{1}{s^{\frac{3}{2} - C (2C_0\varepsilon)^2}} \, \ud s \\
  &\lesssim \frac{ (2C_0\varepsilon)^3 }{t^{\frac{3}{2} - C (2C_0\varepsilon)^2}}.
 \end{align*}

 Finally, we establish the estimate~\eqref{equ:weighted_bound_dx_dt_u1}. Here we write
 \begin{align*}
  \px \pt u_1(t) = \int_1^t \px \cos( (t-s) \jap{\nabla} ) \beta(x) u(s)^3 \, \ud s.
 \end{align*}
 Proceeding similarly as in the derivation of the bound~\eqref{equ:weighted_bound_dx2_u1}, by finite speed of propagation, by the pointwise in time local decay estimate~\eqref{equ:weighted_derivative_decay2}, and by~\eqref{equ:growth_interior_energies_Linfty_assumption}, we compute for all $t \geq 1$ that
 \begin{align*}
  &\bigl\| \jap{x}^{-2} \chi_{\calS_t^R} \px \pt u_1(t) \bigr\|_{L^2_x(\bbR)} \\
  &\quad \lesssim \int_1^t \bigl\| \jap{x}^{-2} \px \cos( (t-s) \jap{\nabla} ) \jap{x}^{-2} \bigr\|_{H^1_x \to L^2_x} \bigl\| \jap{x}^2 \beta(x) \chi_{\calS_s^R} u(s)^3 \bigr\|_{H^1_x} \, \ud s \\
  &\quad \lesssim \int_1^t \frac{1}{\jap{t-s}^{\frac{3}{2}}} \frac{ (2C_0\varepsilon)^3 }{s^{\frac{3}{2} - C (2C_0\varepsilon)^2}} \, \ud s \\
  &\quad \lesssim \frac{ (2C_0\varepsilon)^3 }{t^{\frac{3}{2} - C (2C_0\varepsilon)^2}}.
 \end{align*}
 This finishes the proof of Lemma~\ref{lem:weighted_bounds_u1}.
\end{proof}

We now turn to the proof of Proposition~\ref{prop:interior_energy_estimate}.

\begin{proof}[Proof of Proposition~\ref{prop:interior_energy_estimate}]
We begin by computing that
\begin{align*}
 \frac{\ud}{\ud \rho} E_{int, \rho}(u) &= 2 \int_{H_\rho} \bigl( (\Box + 1) u \bigr) (\partial_t u) \frac{\rho}{t} \, \ud x \\
 &= 2 \int_{H_\rho} \bigl( \beta(x) u^3 + \beta_0 u^3 \bigr) (\partial_t u) \frac{\rho}{t} \, \ud x \\
 &\lesssim \|u\|_{L^\infty(H_\rho)}^2 \|u\|_{L^2(H_\rho)} \bigl\| (\partial_t u) {\textstyle \frac{\rho}{t}} \bigr\|_{L^2(H_\rho)} \\
 &\lesssim \|u\|_{L^\infty(H_\rho)}^2 E_{int, \rho}(u).
\end{align*}
Moreover, since $Z$ commutes with $\Box + 1$, we have
\begin{align*}
 \frac{\ud}{\ud \rho} E_{int, \rho}(Zu_0) &= 2 \int_{H_\rho} \bigl( (\Box + 1) (Z u_0) \bigr) (\partial_t Z u_0) \frac{\rho}{t} \, \ud x \\
 &= 6 \beta_0 \int_{H_\rho} (Zu) u^2 (\partial_t Z u_0) \frac{\rho}{t} \, \ud x \\
 &\lesssim \|u\|_{L^\infty(H_\rho)}^2 \bigl( \|Z u_0\|_{L^2(H_\rho)} + \|Z u_1\|_{L^2(H_\rho)} \bigr)  \bigl\| (\partial_t Z u_0) {\textstyle \frac{\rho}{t} } \bigr\|_{L^2(H_\rho)} \\
 &\lesssim \|u\|_{L^\infty(H_\rho)}^2 \bigl( E_{int, \rho}(Zu_0)^{\frac{1}{2}} + E_{int, \rho}(Zu_1)^{\frac{1}{2}} \bigr) E_{int, \rho}(Zu_0)^{\frac{1}{2}} \\
 &\lesssim \|u\|_{L^\infty(H_\rho)}^2 \calE_{int}(\rho).
\end{align*}
Thus, integrating in $\rho$ and invoking the decay assumption~\eqref{equ:growth_interior_energies_Linfty_assumption}, we have for any $1 \leq \tau \leq R$ that
\begin{equation*}
 E_{int, \tau}(u) + E_{int, \tau}(Zu_0) \leq \calE_{int}(1) + C \int_1^\tau \frac{(2C_0\varepsilon)^2}{\rho} \calE_{int}(\rho) \, \ud \rho.
\end{equation*}

Correspondingly, it remains to obtain control of the growth of $E_{int, \tau}(Zu_1)$, which is the heart of the matter. Here we compute
\[
 (\Box + 1) (Zu_1) = Z ( \beta(x) u^3 ) = 3 \beta(x) (Zu) u^2 + t (\partial_x \beta)(x) u^3
\]
and thus have to estimate for any $1 \leq \tau \leq R$,
\begin{equation} \label{equ:E_int_Z_u1}
 \begin{aligned}
  E_{int, \tau}(Zu_1) &= E_{int, 1}(Zu_1) + 2 \int_1^\tau \int_{H_\rho} \bigl( (\Box + 1)(Zu_1) \bigr) (\partial_t Zu_1) \frac{\rho}{t} \, \ud x \, \ud \rho \\
  &= E_{int, 1}(Zu_1) \\
  &\qquad + 6 \int_1^\tau \int_{H_\rho} \beta(x) (Zu) u^2 (\partial_t Zu_1) \frac{\rho}{t} \, \ud x \, \ud \rho + 2 \int_1^\tau \int_{H_\rho} t (\partial_x \beta)(x) u^3 (\partial_t Z u_1) \frac{\rho}{t} \, \ud x \, \ud \rho.
 \end{aligned}
\end{equation}
Using~\eqref{equ:growth_interior_energies_Linfty_assumption}, the second term on the right-hand side of~\eqref{equ:E_int_Z_u1} can be easily bounded by
\begin{align*}
 6 \int_1^\tau \int_{H_\rho} \beta(x) (Zu) u^2 (\partial_t Zu_1) \frac{\rho}{t} \, \ud x \, \ud \rho &\lesssim_\beta \int_1^\tau \|Zu\|_{L^2(H_\rho)} \|u\|_{L^\infty(H_\rho)}^2 \| (\partial_t Zu_1) {\textstyle \frac{\rho}{t}} \|_{L^2(H_\rho)} \, \ud \rho \\
 &\lesssim \int_1^\tau \frac{(2 C_0 \varepsilon)^2}{\rho} \calE_{int}(\rho) \, \ud \rho.
\end{align*}
In order to estimate the difficult third term on the right-hand side of~\eqref{equ:E_int_Z_u1}, we first note that by a change of variables
\begin{equation} \label{equ:E_int_Z_u1_difficult}
 \begin{aligned}
  \int_1^\tau \int_{H_\rho} t (\partial_x \beta)(x) u^3 (\partial_t Z u_1) \frac{\rho}{t} \, \ud x \, \ud \rho &= \int_1^\tau \int_{H_\rho} (\partial_x \beta)(x) u^3 (\partial_t Z u_1) \rho \, \ud x \, \ud \rho \\
  &= \int_1^\infty \int_{\bR} \chi_{\calS_t^\tau \backslash \calS_t^1} (\partial_x \beta)(x) u^3 (\partial_t Z u_1) \, t \, \ud x \, \ud t \\
  &\leq \int_1^\tau \int_{\bR} \chi_{\calS_t^\tau \backslash \calS_t^1} |(\partial_x \beta)(x)| |u|^3 |\partial_t Z u_1| \, t \, \ud x \, \ud t \\
  &\qquad + \int_\tau^\infty \int_{\bR} \chi_{\calS_t^\tau \backslash \calS_t^1} |(\partial_x \beta)(x)| |u|^3 |\partial_t Z u_1| \, t \, \ud x \, \ud t,
 \end{aligned}
\end{equation}
where $\chi_{\calS_t^\tau \backslash \calS_t^1}(x)$ is a sharp cut-off function given by
\begin{equation*}
 \chi_{\calS_t^\tau \backslash \calS_t^1}(x) = \left\{ \begin{array}{ll}
                                                                         1, \quad (t,x) \in \calS_t^\tau \backslash \calS_t^1, \\
                                                                         0, \quad \text{otherwise}.
                                                                        \end{array}
								\right.
\end{equation*}
Next, we decompose $\partial_t Zu_1$ into
\begin{equation} \label{equ:decompose_partial_t_Z_u}
 \begin{aligned}
  \partial_t Zu_1 &= \partial_t (t \partial_x + x \partial_t) u_1 \\
  &= t \partial_x \partial_t u_1 + \partial_x u_1 + x \partial_t^2 u_1 \\
  &= t \partial_x \partial_t u_1 + \partial_x u_1 + x \partial_x^2 u_1 - x u_1 + x \beta(x) u^3,
 \end{aligned}
\end{equation}
where in the last line we inserted the equation $(\partial_t^2 - \partial_x^2 + 1) u_1 = \beta(x) u^3$ for $\partial_t^2 u_1$. We now have to bound the two integrals on the right-hand side of~\eqref{equ:E_int_Z_u1_difficult} for each term in the above expression for $\partial_t Zu_1$.

We describe in detail how to handle the contribution of the most difficult term $t \partial_x \partial_t u_1$. For the first integral on the right-hand side of~\eqref{equ:E_int_Z_u1_difficult} we have, by the decay assumption~\eqref{equ:growth_interior_energies_Linfty_assumption} and the weighted energy estimate~\eqref{equ:weighted_bound_dx_dt_u1} for~$\partial_x \partial_t u_1$, for any $1 \leq \tau \leq R$ that
\begin{align*}
  &\int_1^\tau \int_{\bR} \chi_{\calS_t^\tau \backslash \calS_t^1} |(\partial_x \beta)(x)| |u|^3 |\partial_x \partial_t u_1| \, t^2 \, \ud x \, \ud t \\
  &\leq \int_1^\tau \bigl\| \jap{x}^2 \partial_x \beta \bigr\|_{L^2_x(\bbR)} \bigl\| \chi_{\calS_t^R} u(t) \bigr\|_{L^\infty_x(\bbR)}^3 \bigl\| \jap{x}^{-2} \chi_{\calS_t^R} \partial_x \partial_t u_1 \bigr\|_{L^2_x(\bR)} t^2 \, \ud t \\
  &\lesssim_\beta \int_1^\tau \frac{(2 C_0 \varepsilon)^3}{t^{\frac{3}{2}}} \frac{(2 C_0 \varepsilon)^3}{t^{\frac{3}{2} - C (2 C_0 \varepsilon)^2}} t^2 \, \ud t \\
  &\lesssim (2 C_0 \varepsilon)^6 \int_1^\tau \frac{1}{t^{1 - C(2C_0\varepsilon)^2}} \, \ud t \\
  &\lesssim (2 C_0 \varepsilon)^6 \tau^{C (2C_0\varepsilon)^2}.
\end{align*}
For the second integral on the right-hand side of~\eqref{equ:E_int_Z_u1_difficult} we have to proceed more carefully and exploit the decreasing size of the spatial support of $\chi_{\calS_t^\tau \backslash \calS_t^1}$ for large $t$ in order to obtain an integrable time decay at infinity. More precisely, for any $1 \leq \tau \leq R$ and for $p = \infty-$, we have by the decay assumption~\eqref{equ:growth_interior_energies_Linfty_assumption} and the weighted energy estimate~\eqref{equ:weighted_bound_dx_dt_u1} that
\begin{align*}
 &\int_\tau^\infty \int_{\bR} \chi_{\calS_t^\tau \backslash \calS_t^1} |(\partial_x \beta)(x)| |u|^3 |\partial_x \partial_t u_1| \, t^2 \, \ud x \, \ud t \\
 &\lesssim \int_\tau^\infty \bigl\| \chi_{\calS_t^\tau \backslash \calS_t^1} \bigr\|_{L^p_x(\bbR)} \bigl\| \jap{x}^2 \partial_x \beta \bigr\|_{L^{\frac{2p}{p-2}}_x(\bbR)} \bigl\| \chi_{\calS_t^R} u(t) \bigr\|_{L^\infty_x(\bbR)}^3 \bigl\| \jap{x}^{-2} \chi_{\calS_t^R} \partial_x \partial_t u_1(t) \bigr\|_{L^2_x(\bR)} t^2 \, \ud t \\
 &\lesssim_\beta \int_\tau^\infty \biggl( \frac{\tau^2-1}{\sqrt{t^2-1}} \biggr)^{\frac{1}{p}} \frac{(2 C_0 \varepsilon)^3}{t^{\frac{3}{2}}} \frac{(2 C_0 \varepsilon)^3}{t^{\frac{3}{2} - C (2 C_0 \varepsilon)^2}} t^2 \, \ud t \\
 &\lesssim (2 C_0 \varepsilon)^6 \tau^{\frac{2}{p}} \int_\tau^\infty \frac{1}{t^{1 - C (2 C_0 \varepsilon)^2 + \frac{1}{p}}} \, \ud t \\
 &\lesssim (2 C_0 \varepsilon)^6 \tau^{C (2 C_0 \varepsilon)^2 + \frac{1}{p}}
\end{align*}
as long as $\frac{1}{p} > C (2 C_0 \varepsilon)^2$. Here we used that for any fixed $t \geq \tau$ there holds
\begin{align*}
 \bigl\| \chi_{\calS_t^\tau \backslash \calS_t^1} \bigr\|_{L^p_x(\bbR)} &= \biggl( 2 \bigl( \sqrt{t^2-1} - \sqrt{t^2 - \tau^2} \bigr) \biggr)^{\frac{1}{p}} = \biggl( 2 \frac{\tau^2 - 1}{\sqrt{t^2-1} + \sqrt{t^2-\tau^2}} \biggr)^{\frac{1}{p}} \lesssim \biggl( \frac{\tau^2-1}{\sqrt{t^2-1}} \biggr)^{\frac{1}{p}}.
\end{align*}
For the purposes of the proof of Proposition~\ref{prop:interior_energy_estimate} we pick $\frac{1}{p} := 2 C (2C_0\varepsilon)^2$.

Now we are left to estimate the two integrals on the right-hand side of~\eqref{equ:E_int_Z_u1_difficult} for the remaining terms in the decomposition~\eqref{equ:decompose_partial_t_Z_u} of $\partial_t Zu_1$. Their contributions can be estimated similarly as above, using the weighted energy estimates~\eqref{equ:weighted_bound_u1}--\eqref{equ:weighted_bound_dx2_u1}, the decay assumption~\eqref{equ:growth_interior_energies_Linfty_assumption}, and by absorbing the weight~$x$ by the spatial decay of $(\px \beta)(x)$.

Putting all of the above estimates together yields that uniformly for all $1 \leq \tau \leq R$ there holds
\begin{equation} \label{equ:growth_interior_energies}
 \calE_{int}(\tau) \lesssim \calE_{int}(1) + (2 C_0 \varepsilon)^6 \tau^{3 C (2 C_0 \varepsilon)^2} + \int_1^{\tau} \frac{(2 C_0 \varepsilon)^2}{\rho} \calE_{int}(\tau) \, \ud \rho.
\end{equation}
Since by the exterior energy estimate~\eqref{equ:exterior_energy_estimate} from Proposition~\ref{prop:exterior_energy_estimate} we have
\[
 \calE_{int}(1) \lesssim \limsup_{T \to \infty} \calE_{ext}(T) \lesssim \calE_{ext}(1) \lesssim \varepsilon^2,
\]
the desired slow growth estimate~\eqref{equ:interior_E_int_goal} for the interior energy functional $\calE_{int}$ now follows from~\eqref{equ:growth_interior_energies} by Gronwall's inequality. 
\end{proof}

In the next corollary we record several key growth estimates for the variable $w = \rho^{\frac{1}{2}} \cosh^{\frac{1}{2}}(y) u$ in hyperbolic coordinates. These are immediate consequences of Proposition~\ref{prop:interior_energy_estimate}.

\begin{corollary} \label{cor:interior_growth_estimates_hyperbolic_coordinates}
 Let $R \geq 1$ and suppose that the assumptions of Proposition~\ref{prop:interior_energy_estimate} are in place. Then there exists a constant $C_1 > 0$, independent of $C_0$ and $R$, such that
 \begin{align}
  \sup_{1 \leq \rho \leq R} \, \rho^{-\delta} \| w(\rho, \cdot)\|_{H^1_y(\bbR)} &\leq C_1 \varepsilon, \label{equ:H1y_norm_w} \\ 
  \sup_{1 \leq \rho \leq R} \, \rho^{-\delta} \| (\partial_\rho w)(\rho, \cdot) \|_{L^2_y(\bbR)} &\leq C_1 \varepsilon, \label{equ:L2_norm_partial_rho_w} \\
  \sup_{1 \leq \rho \leq R} \, \rho^{-\delta} \bigl\| \cosh^{-1}(\cdot) (\partial_\rho w)(\rho, \cdot) \bigr\|_{H^1_y(\bbR)} &\leq C_1 \varepsilon. \label{equ:H1y_cosh_norm_partial_rho_w}
 \end{align}
\end{corollary}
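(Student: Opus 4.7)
The goal is to translate the hyperboloidal energy bound $\calE_{int}(\rho) \lesssim \varepsilon^2 \rho^{2\delta}$ from \prop{prop:interior_energy_estimate} into the three claimed norm bounds on $w = t^{1/2} u = \rho^{1/2} \cosh^{1/2}(y)\, u$. Recalling the hyperbolic form of the energy,
\begin{equation*}
 E_{int,\rho}(\phi) = \int_{\bbR} \Bigl( \tfrac{\rho}{2 \cosh(y)}\bigl((\pt \phi)^2 + (\px \phi)^2 + (\partial_\rho \phi)^2 + \rho^{-2}(\partial_y \phi)^2\bigr) + \rho \cosh(y)\, \phi^2 \Bigr) \, \ud y,
\end{equation*}
the controls I would read off are $\int \rho \cosh(y)\, \phi^2 \, \ud y \leq E_{int,\rho}(\phi)$ and $\int \tfrac{\rho}{\cosh(y)} (\partial_\ast \phi)^2 \, \ud y \leq 2 E_{int,\rho}(\phi)$ for each $\partial_\ast \in \{\pt, \px, \partial_\rho\}$, applied with $\phi = u$ and $\phi = Z u_\ell$.

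The whole plan rests on two algebraic identities. The first, $\partial_y w = \tfrac{1}{2} \tanh(y)\, w + t^{1/2} Z u$, follows from a Leibniz computation together with $\partial_y u = Z u$. The second, obtained by inverting $\pt = \cosh(y) \partial_\rho - \rho^{-1} \sinh(y) \partial_y$ and then applying the resulting formula for $\partial_\rho$ to $w = t^{1/2} u$, reads
\begin{equation*}
 \partial_\rho w = \rho^{1/2} \cosh^{-1/2}(y)\, \pt u + \frac{w}{2 \rho \cosh^2(y)} + \frac{\tanh(y)}{\rho}\, \partial_y w.
\end{equation*}
The main conceptual point of the proof is that the leading term here carries the weight $\cosh^{-1/2}(y)$ on $\pt u$, which is exactly the weight compatible with the energy density; by contrast, the naive expansion $\partial_\rho u = \cosh(y) \pt u + \sinh(y) \px u$ produces the wrong-sign weight $\cosh^{1/2}(y)$ and a direct estimate then fails by a factor of $\cosh^2(y)$.

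For~\eqref{equ:H1y_norm_w} I would observe $\|w\|_{L^2_y}^2 = \int \rho \cosh(y)\, u^2 \, \ud y \leq E_{int,\rho}(u) \lesssim \varepsilon^2 \rho^{2\delta}$ directly, and then use the first identity together with $Z u = Z u_0 + Z u_1$ to bound $\|\partial_y w\|_{L^2_y}^2 \lesssim \|w\|_{L^2_y}^2 + \int \rho \cosh(y) (Z u)^2 \, \ud y \lesssim \calE_{int}(\rho)$. For~\eqref{equ:L2_norm_partial_rho_w} the three terms in the second identity contribute, respectively, $\|\rho^{1/2} \cosh^{-1/2}(y) \pt u\|_{L^2_y}^2 = \int \tfrac{\rho}{\cosh(y)} (\pt u)^2 \, \ud y \leq 2 E_{int,\rho}(u)$, a piece bounded by $\rho^{-1} \|w\|_{L^2_y}$, and a piece bounded by $\rho^{-1} \|\partial_y w\|_{L^2_y}$, the latter two already handled by~\eqref{equ:H1y_norm_w}.

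For~\eqref{equ:H1y_cosh_norm_partial_rho_w} the $L^2_y$ part is immediate from~\eqref{equ:L2_norm_partial_rho_w} since $\cosh^{-1}(y) \leq 1$. For the $\partial_y$ part, I would expand $\partial_y(\cosh^{-1}(y) \partial_\rho w) = -\tanh(y) \cosh^{-1}(y) \partial_\rho w + \cosh^{-1}(y) \partial_\rho \partial_y w$ and then differentiate the first identity in $\rho$ to get $\partial_\rho \partial_y w = \tfrac{1}{2} \tanh(y) \partial_\rho w + \tfrac{\cosh(y)}{2 t^{1/2}} Z u + t^{1/2} \partial_\rho Z u$. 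After multiplying by $\cosh^{-1}(y)$, the dominant piece becomes $\rho^{1/2} \cosh^{-1/2}(y) \partial_\rho Z u$, whose $L^2_y$-norm squared equals $\int \tfrac{\rho}{\cosh(y)} (\partial_\rho Z u)^2 \, \ud y \leq 2 E_{int,\rho}(Z u) \lesssim \calE_{int}(\rho)$, while the other two pieces give lower-order contributions controlled by~\eqref{equ:L2_norm_partial_rho_w} and by a factor $\rho^{-1}$ times $\|(\rho \cosh y)^{1/2} Z u\|_{L^2_y}$. The only genuinely delicate step throughout is the weight bookkeeping emphasized above; once the two identities are in place, every remaining term lands in a piece of the energy with matching $\cosh(y)$ weights.
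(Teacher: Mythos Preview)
Your argument is correct. For \eqref{equ:H1y_norm_w} and \eqref{equ:H1y_cosh_norm_partial_rho_w} the paper only says these are ``straightforward'' consequences of the energy bound, and the details you supply are exactly right.

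For \eqref{equ:L2_norm_partial_rho_w} you take a genuinely different route from the paper. The paper writes
\[
 \partial_\rho w = \frac{\rho}{t \mp x}\,\partial_t w \pm \frac{\rho}{t \mp x}\,\partial_x w \mp \rho^{-1}\,\partial_y w
\]
and then, choosing the sign according to the sign of $x$ so that $t \mp x \geq t$, obtains the pointwise bound $|\partial_\rho w| \leq \frac{\rho}{t}\bigl(|\partial_t w| + |\partial_x w|\bigr) + \rho^{-1}|\partial_y w|$; each piece is then fed into the energy. Your approach instead uses the single identity $\partial_\rho = \cosh^{-1}(y)\,\partial_t + \rho^{-1}\tanh(y)\,\partial_y$ applied to $w$, which directly produces the good weight $\cosh^{-1/2}(y)$ on $\partial_t u$ without any case split and without ever invoking $\partial_x u$. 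Both arguments exploit the same underlying point (that the $\cosh^{+1}(y)$ weight in the naive expansion is illusory), but your version is slightly cleaner and more direct, while the paper's version has the minor advantage of giving a symmetric pointwise inequality in $\partial_t w$ and $\partial_x w$.
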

\begin{proof}
 It is straightforward to see that the bounds~\eqref{equ:H1y_norm_w} and~\eqref{equ:H1y_cosh_norm_partial_rho_w} are consequences of the slow growth estimate~\eqref{equ:interior_E_int_goal} for the interior energy functional $\calE_{int}(\rho)$ established in the previous Proposition~\ref{prop:interior_energy_estimate}. For the proof of~\eqref{equ:L2_norm_partial_rho_w} we first observe that in the interior region $\jap{x} \leq t$, since $\rho^2 = t^2 - x^2 = (t+x)(t-x)$, we may write
 \begin{align*}
  \partial_\rho w = \rho^{-1} (t \pt + x \px) w &= \rho^{-1} \bigl( (t \pt + x \px)w \pm (t \px + x \pt) w \mp (t \px + x \pt) w \bigr) \\
  &= \rho^{-1} \bigl( (t \pm x) \pt w + (x \pm t) \px w \mp \partial_y w \bigr) \\
  &= \rho^{-1} \biggl( \frac{\rho^2}{t \mp x} \pt w - \frac{\rho^2}{x \mp t} \px w \mp \partial_y w \biggr) \\
  &= \frac{\rho}{t \mp x} \pt w - \frac{\rho}{x \mp t} \px w \mp \rho^{-1} \partial_y w.
 \end{align*}
 It follows that in the interior region there holds 
 \begin{align*}
  |\partial_\rho w| \leq \frac{\rho}{t} |\pt w| + \frac{\rho}{t} |\px w| + \rho^{-1} |\partial_y w|.
 \end{align*}
 Hence, we conclude that
 \begin{align*}
  \| (\partial_\rho w)(\rho, \cdot) \|_{L^2_y(\bbR)} \lesssim \| {\textstyle \frac{\rho}{t}} (\pt w)(\rho, \cdot) \|_{L^2_y(\bbR)} + \bigl\| {\textstyle \frac{\rho}{t}} (\px w)(\rho, \cdot) \|_{L^2_y(\bbR)} + \| (\partial_y w)(\rho, \cdot) \|_{L^2_y(\bbR)} \lesssim \calE_{int}(\rho)^{\frac{1}{2}},
 \end{align*}
 which finishes the proof.
\end{proof}

\subsection{Interior Decay Estimate}

\begin{proof}[Proof of Proposition~\ref{prop:interior_decay}]
 Let $R \geq 1$. We are now in the position to carry out the bootstrap argument~\eqref{equ:interior_bootstrap_assumption}--\eqref{equ:interior_bootstrap_conclusion}, which reads in terms of the variable $w = \rho^{\frac{1}{2}} \cosh^{\frac{1}{2}}(y) u$ in hyperbolic coordinates
 \begin{align}
  \sup_{1 \leq \rho \leq R} \, \sup_{y \in \bbR} \, |w(\rho, y)| &\leq 2 C_0 \varepsilon \label{equ:interior_bootstrap_assumption_hyperbolic_coord} \\
  &\qquad \qquad \Longrightarrow \quad \sup_{1 \leq \rho \leq R} \, \sup_{y \in \bbR} \, |w(\rho, y)| \leq C_0 \varepsilon. \label{equ:interior_bootstrap_conclusion_hyperbolic_coord}
 \end{align}
 In view of our initial data assumptions~\eqref{equ:data_assumption_interior_decay_prop}, the solution~$u(t)$ to~\eqref{equ:nlkg} obeys the exterior decay estimate~\eqref{equ:exterior_decay_estimate}. Choosing $C_0 \gg 1$ sufficiently large and combining~\eqref{equ:exterior_decay_estimate} with the bootstrap hypothesis~\eqref{equ:interior_bootstrap_assumption}, we may thus assume that the solution~$u(t)$ satisfies the decay estimate
 \begin{equation*}
  \sup_{t \geq 1} \, t^{\frac{1}{2}} \bigl\| \chi_{\calS_t^R} u(t) \bigr\|_{L^\infty_x(\bbR)} \leq 2 C_0 \varepsilon.
 \end{equation*}
 Hence, we can invoke the interior energy estimates from Proposition~\ref{prop:interior_energy_estimate} and Corollary~\ref{cor:interior_growth_estimates_hyperbolic_coordinates}, where importantly the constant $C_1 > 0$ is independent of $C_0$ and $R$.

 \medskip

 Equipped with the slow growth estimate~\eqref{equ:H1y_norm_w} for the $H^1_y$-norm of the variable $w$ and the slow growth estimate~\eqref{equ:L2_norm_partial_rho_w} for the $L^2_y$-norm of $\partial_\rho w$, we now begin with the proof of~\eqref{equ:interior_bootstrap_conclusion_hyperbolic_coord}. 
 To this end we decompose the variable $w$ into a low-frequency and a high-frequency component, using a $\rho$-dependent frequency cut-off,
 \begin{equation*}
  w(\rho) = (P_{\leq \rho^{\sigma}} w)(\rho) + (P_{> \rho^\sigma} w)(\rho),
 \end{equation*}
 where the absolute constant $0 < \sigma \ll 1$ is chosen such that $0 < \delta \ll \sigma \ll 1$.

 \medskip

 For the high-frequency component it is straightforward to derive a uniform $L^\infty_y$-estimate. Indeed, by Bernstein estimates and the slow growth bound~\eqref{equ:H1y_norm_w} for the $H^1_y$-norm of $w$, we obtain 
 \begin{equation} \label{equ:Linfty_high_freq_bound}
  \| P_{> \rho^{\sigma}} w \|_{L^\infty_y(\bbR)} \lesssim \rho^{-\frac{\sigma}{2}} \| \partial_y w(\rho) \|_{L^2_y(\bbR)} \lesssim \rho^{-\frac{\sigma}{2}} \rho^{\delta} C_1 \varepsilon \leq \frac{C_0}{2} \varepsilon
 \end{equation}
 as long as $C_0 \gg 1$ is chosen sufficiently large initially.

 \medskip

 In order to bound the low-frequency component, we first show that $P_{\leq \rho^\sigma} w$ essentially solves an ODE, for which we will then invoke a suitable $L^\infty$-estimate. Rearranging the nonlinear Klein-Gordon equation~\eqref{equ:nlkg_hyperbolic_coord} written in hyperbolic coordinates, we have 
 \begin{equation} \label{equ:nlkg_w}
  \begin{aligned}
   \bigl( \partial_\rho^2 + 1 \bigr) w &= \frac{1}{\rho} \calB w^3 + \frac{1}{\rho^2} \partial_y^2 w - \frac{1}{\rho^2} \tanh(y) \partial_y w - \frac{3}{4 \rho^2} \cosh^{-2}(y) w,  
  \end{aligned}
 \end{equation}
 Here we use the short-hand notation 
 \begin{equation} \label{equ:definition_B}
  \calB(\rho, y) := \frac{\beta_0}{\cosh(y)} + \frac{\beta(\rho \sinh(y))}{\cosh(y)}
 \end{equation}
 and note that the following bounds for the coefficient $\calB$ can be readily verified
 \begin{equation*}
  \sup_{\rho \geq 1} \, \| \calB(\rho, \cdot) \|_{L^\infty_y(\bbR)} \lesssim 1, \quad \sup_{\rho \geq 1} \, \rho \| \partial_\rho \calB(\rho, \cdot) \|_{L^\infty_y(\bbR)} \lesssim 1.
 \end{equation*}
 Next we apply the low frequency projection $P_{\leq \rho^\sigma}$ to~\eqref{equ:nlkg_w} to find that
 \begin{equation} \label{equ:nlkg_w_low_freq_proj}
  \begin{aligned}
   \bigl( \partial_\rho^2 + 1 \bigr) (P_{\leq \rho^\sigma} w) &=  \frac{1}{\rho} P_{\leq \rho^\sigma} \bigl( \calB w^3 \bigr) - [ P_{\leq \rho^\sigma}, \partial_\rho^2 ] w + \frac{1}{\rho^2} \partial_y^2 (P_{\leq \rho^\sigma} w) \\
   &\quad \quad - \frac{1}{\rho^2} P_{\leq \rho^\sigma} \bigl( \tanh(y) \partial_y w \bigr) - \frac{3}{4 \rho^2} P_{\leq \rho^\sigma} \bigl( \cosh^{-2}(y) w \bigr).  
  \end{aligned}
 \end{equation}
 At this point all terms on the right-hand side of~\eqref{equ:nlkg_w_low_freq_proj} apart from the first two can already be easily treated as integrable error terms thanks to the slowly growing energy bound~\eqref{equ:H1y_norm_w}. We further decompose the first term on the right-hand side into remainder terms and a main term that will have to be incorporated into the ODE $L^\infty$-estimate,
 \begin{equation*} 
  \begin{aligned}
   \frac{1}{\rho} P_{\leq \rho^\sigma} \bigl( \calB w^3 \bigr) &= \frac{1}{\rho} P_{\leq \rho^\sigma} \Bigl( \calB (P_{\leq \rho^\sigma} w)^3 \Bigr) + \frac{1}{\rho} P_{\leq \rho^\sigma} \Bigl( \calB \bigl( w^3 - (P_{\leq \rho^\sigma} w)^3 \bigr) \Bigr)  \\
   &= \frac{1}{\rho} (P_{\leq 2^{10} \rho^\sigma} \calB) (P_{\leq \rho^\sigma} w)^3 - \frac{1}{\rho} P_{>\rho^\sigma} \Bigl( (P_{\leq 2^{10} \rho^\sigma} \calB) (P_{\leq \rho^\sigma} w)^3 \Bigr) \\
   &\quad \quad + \frac{1}{\rho} P_{\leq \rho^\sigma} \Bigl( \calB \bigl( w^3 - (P_{\leq \rho^\sigma} w)^3 \bigr) \Bigr),
  \end{aligned}
 \end{equation*}
 where we used that by frequency considerations there holds
 \begin{equation*}
  P_{\leq \rho^\sigma} \Bigl( (P_{> 2^{10} \rho^\sigma} \calB) (P_{\leq \rho^\sigma} w)^3 \Bigr) = 0.
 \end{equation*}
 Thus, we arrive at the following equation for the low-frequency component
 \begin{equation} \label{equ:nlkg_w_low_freq_proj_rearranged}
  \begin{aligned}
   \bigl( \partial_\rho^2 + 1 \bigr) (P_{\leq \rho^\sigma} w) &= \frac{1}{\rho} (P_{\leq 2^{10} \rho^\sigma} \calB) (P_{\leq \rho^\sigma} w)^3 - [ P_{\leq \rho^\sigma}, \partial_\rho^2 ] w + \calR,
  \end{aligned}
 \end{equation}
 with an integrable remainder term $\calR$ given by
 \begin{equation} \label{equ:definition_R}
  \begin{aligned}
   \calR &= \frac{1}{\rho^2} \partial_y^2 (P_{\leq \rho^\sigma} w) - \frac{1}{\rho^2} P_{\leq \rho^\sigma} \bigl( \tanh(y) \partial_y w \bigr) - \frac{3}{4 \rho^2} P_{\leq \rho^\sigma} \bigl( \cosh^{-2}(y) w \bigr) \\ 
   &\quad \quad + \frac{1}{\rho} P_{\leq \rho^\sigma} \Bigl( \calB \bigl( w^3 - (P_{\leq \rho^\sigma} w)^3 \bigr) \Bigr) - \frac{1}{\rho} P_{>\rho^\sigma} \Bigl( (P_{\leq 2^{10} \rho^\sigma} \calB) (P_{\leq \rho^\sigma} w)^3 \Bigr).
  \end{aligned}
 \end{equation}

Next, we view~\eqref{equ:nlkg_w_low_freq_proj_rearranged} as an ODE in the variable~$\rho$ and invoke a corresponding $L^\infty$-estimate by multiplying~\eqref{equ:nlkg_w_low_freq_proj_rearranged} by $2 \partial_\rho (P_{\leq \rho^\sigma} w)$, which gives
\begin{equation} \label{equ:nlkg_w_low_freq_proj_ODE_form_intermediate}
 \begin{aligned}
  \partial_\rho \Bigl( (P_{\leq \rho^\sigma} w)^2 + \bigl( \partial_\rho (P_{\leq \rho^\sigma} w) \bigr)^2 \Bigr) &= \frac{2}{\rho} \partial_\rho (P_{\leq \rho^\sigma} w) (P_{\leq 2^{10} \rho^\sigma} \calB) (P_{\leq \rho^\sigma} w)^3  \\
  &\quad \quad - 2 \partial_\rho (P_{\leq \rho^\sigma} w) [ P_{\leq \rho^\sigma}, \partial_\rho^2 ] w + 2 \partial_\rho (P_{\leq \rho^\sigma} w) \calR.
 \end{aligned}
\end{equation}
Then the first term on the right-hand side of~\eqref{equ:nlkg_w_low_freq_proj_ODE_form_intermediate} can be rearranged as
\begin{equation} \label{equ:nlkg_w_low_freq_proj_ODE_form_term1_rearranged}
 \begin{aligned}
  &\frac{2}{\rho} (P_{\leq 2^{10} \rho^\sigma} \calB) (P_{\leq \rho^\sigma} w)^3 \partial_\rho (P_{\leq \rho^\sigma} w) \\ 
  &\quad = \partial_\rho \Bigl( \frac{1}{2 \rho} (P_{\leq 2^{10} \rho^\sigma} \calB) (P_{\leq \rho^\sigma} w)^4 \Bigr) - \frac{1}{2\rho} \partial_\rho (P_{\leq 2^{10} \rho^\sigma} \calB) (P_{\leq \rho^\sigma} w)^4 + \frac{1}{2\rho^2} (P_{\leq 2^{10} \rho^\sigma} \calB) (P_{\leq \rho^\sigma} w)^4.
 \end{aligned}
\end{equation}
The second term on the right-hand side of~\eqref{equ:nlkg_w_low_freq_proj_ODE_form_intermediate} involves the commutator $[ P_{\leq \rho^\sigma}, \partial_\rho^2 ]$, which is given by
 \begin{equation*}
  [ P_{\leq \rho^\sigma}, \partial_\rho^2 ] w = - 2 \partial_\rho \bigl( (\partial_\rho P_{\leq \rho^\sigma}) w \bigr) + (\partial_\rho^2 P_{\leq \rho^\sigma}) w = \partial_\rho \biggl( \frac{2}{\rho} (P_{\sim \rho^\sigma}' w) \biggr) + \frac{1}{\rho^2} P_{\sim \rho^\sigma}'' w.
 \end{equation*}
 Here $P_{\sim \rho^\sigma}'$ and $P_{\sim \rho^\sigma}''$ are Littlewood-Paley projections with standard $L^1$ kernels that localize to frequencies $|\eta| \sim \rho^\sigma$ and that are defined by
 \begin{align*}
  P_{\sim \rho^\sigma}' w &= \frac{\sigma}{\sqrt{2\pi}} \int_{\bbR} e^{+ i \eta y} \frac{\eta}{\rho^\sigma} \varphi'\Bigl( \frac{\eta}{\rho^\sigma} \Bigr) \bigl( \calF_y w \bigr)(\eta) \, \ud \eta, \\
  P_{\sim \rho^\sigma}'' w &= \frac{\sigma}{\sqrt{2\pi}} \int_{\bbR} e^{+ i \eta y} \frac{\eta}{\rho^\sigma} \varphi' \Bigl( \frac{\eta}{\rho^\sigma} \Bigr) \bigl( \calF_y w \bigr)(\eta) \, \ud \eta \\
  &\quad + \frac{\sigma^2}{\sqrt{2\pi}} \int_{\bbR} e^{+ i \eta y} \biggl( \frac{\eta}{\rho^\sigma} \varphi'\Bigl( \frac{\eta}{\rho^\sigma} \Bigr) + \Bigl( \frac{\eta}{\rho^\sigma} \Bigr)^2 \varphi''\Bigl( \frac{\eta}{\rho^\sigma} \Bigr) \biggr) \bigl( \calF_y w \bigr)(\eta) \, \ud \eta.
 \end{align*}
 Thus, we may rewrite the second term on the right-hand side of~\eqref{equ:nlkg_w_low_freq_proj_ODE_form_intermediate} as
 \begin{equation} \label{equ:nlkg_w_low_freq_proj_ODE_form_term2_rearranged}
  \begin{aligned}
   &-2 \partial_\rho(P_{\leq \rho^\sigma} w) [P_{\leq \rho^\sigma}, \partial_\rho^2] w \\
   &\quad \quad = -2 \partial_\rho(P_{\leq \rho^\sigma} w) \partial_\rho \biggl( \frac{2}{\rho} (P_{\sim \rho^\sigma}' w) \biggr) - 2 \partial_\rho(P_{\leq \rho^\sigma} w) \frac{1}{\rho^2} (P_{\sim \rho^\sigma}'' w) \\
   &\quad \quad = - \partial_\rho \biggl( \frac{4}{\rho} \partial_\rho(P_{\leq \rho^\sigma} w) (P_{\sim \rho^\sigma}' w) \biggr) + \frac{4}{\rho} \partial_\rho^2 (P_{\leq \rho^\sigma} w) (P_{\sim \rho^\sigma}' w) - \frac{2}{\rho^2} \partial_\rho(P_{\leq \rho^\sigma} w) (P_{\sim \rho^\sigma}'' w).
  \end{aligned}
 \end{equation}
 Inserting~\eqref{equ:nlkg_w_low_freq_proj_ODE_form_term1_rearranged} and \eqref{equ:nlkg_w_low_freq_proj_ODE_form_term2_rearranged} into~\eqref{equ:nlkg_w_low_freq_proj_ODE_form_intermediate}, we obtain that
 \begin{equation} \label{equ:nlkg_w_low_freq_proj_ODE_form}
  \begin{aligned}
   &\partial_\rho \Bigl( (P_{\leq \rho^\sigma} w)^2 + \bigl( \partial_\rho (P_{\leq \rho^\sigma} w) \bigr)^2 \Bigr) \\
   &\quad \quad= \partial_\rho \Bigl( \frac{1}{2 \rho} (P_{\leq 2^{10} \rho^\sigma} \calB) (P_{\leq \rho^\sigma} w)^4 \Bigr) - \partial_\rho \biggl( \frac{4}{\rho} \partial_\rho(P_{\leq \rho^\sigma} w) (P_{\sim \rho^\sigma}' w) \biggr) \\
   &\quad \quad \quad \quad + 2 \partial_\rho (P_{\leq \rho^\sigma} w) \calR - \frac{1}{2\rho} \partial_\rho (P_{\leq 2^{10} \rho^\sigma} \calB) (P_{\leq \rho^\sigma} w)^4 + \frac{1}{2\rho^2} (P_{\leq 2^{10} \rho^\sigma} \calB) (P_{\leq \rho^\sigma} w)^4 \\
   &\quad \quad \quad \quad + \frac{4}{\rho} \partial_\rho^2 (P_{\leq \rho^\sigma} w) (P_{\sim \rho^\sigma}' w) - \frac{2}{\rho^2} \partial_\rho(P_{\leq \rho^\sigma} w) (P_{\sim \rho^\sigma}'' w).
  \end{aligned}
 \end{equation}
 Next, we introduce the quantity
 \begin{equation*}
  M(\rho, y) := (P_{\leq \rho^\sigma} w)^2(\rho, y) + \bigl( \partial_\rho (P_{\leq \rho^\sigma} w) \bigr)^2(\rho, y).
 \end{equation*}
 Integrating~\eqref{equ:nlkg_w_low_freq_proj_ODE_form} in $\rho$ from $\rho = 1$ to $\rho = \tau$ for any $1 \leq \tau \leq R$ and taking the $L^\infty_y$-norm, we find that
 \begin{equation} \label{equ:Linfty_applied_to_ODE}
  \begin{aligned}
   \| M(\tau) \|_{L^\infty_y} &\lesssim \|M(1)\|_{L^\infty_y} + \sup_{1 \leq \rho \leq \tau} \, \frac{1}{\rho}  \bigl\| (P_{\leq 2^{10} \rho^\sigma} \calB) (P_{\leq \rho^\sigma} w)^4 \bigr\|_{L^\infty_y} + \sup_{1 \leq \rho \leq \tau} \, \frac{1}{\rho} \bigl\| \partial_\rho(P_{\leq \rho^\sigma} w) (P_{\sim \rho^\sigma}' w) \bigr\|_{L^\infty_y} \\
   &\qquad + \int_1^\tau \bigl\| \partial_\rho (P_{\leq \rho^\sigma} w) \calR \bigr\|_{L^\infty_y} \, \ud \rho + \int_1^\tau \frac{1}{\rho} \bigl\| \partial_\rho (P_{\leq 2^{10} \rho^\sigma} \calB) (P_{\leq \rho^\sigma} w)^4 \bigr\|_{L^\infty_y} \, \ud \rho \\
   &\qquad + \int_1^\tau \frac{1}{\rho^2} \bigl\| (P_{\leq 2^{10} \rho^\sigma} \calB) (P_{\leq \rho^\sigma} w)^4 \bigr\|_{L^\infty_y} \, \ud \rho + \int_1^\tau \frac{1}{\rho} \bigl\| \partial_\rho^2 (P_{\leq \rho^\sigma} w) (P_{\sim \rho^\sigma}' w) \bigr\|_{L^\infty_y} \, \ud \rho \\
   &\qquad + \int_1^\tau \frac{1}{\rho^2} \bigl\| \partial_\rho(P_{\leq \rho^\sigma} w) (P_{\sim \rho^\sigma}'' w) \bigr\|_{L^\infty_y} \, \ud \rho \\
   &\equiv I + II + \ldots + VIII.
  \end{aligned}
 \end{equation}
It now remains to estimate all terms on the right-hand side of~\eqref{equ:Linfty_applied_to_ODE}, for which the slow growth estimates~\eqref{equ:H1y_norm_w} and~\eqref{equ:L2_norm_partial_rho_w} crucially enter. We also recall the choice $0 < \delta \ll \sigma \ll 1$ of the small constants.

\medskip

\noindent {\it Term $I$.} We first observe that
 \begin{align*}
  \|M(\rho)\|_{L^\infty_y} &\lesssim \bigl\| P_{\leq \rho^\sigma} w \bigr\|_{L^\infty_y}^2 + \frac{1}{\rho^2} \bigl\| P_{\sim \rho^\sigma}' w \bigr\|_{L^\infty_y}^2 + \bigl\| P_{\leq \rho^\sigma} (\partial_\rho w) \bigr\|_{L^\infty_y}^2 \\
  &\lesssim \rho^\sigma \|w(\rho)\|_{L^2_y}^2 + \rho^{-2+\sigma} \|w(\rho)\|_{L^2_y}^2 + \rho^\sigma \| (\partial_\rho w)(\rho) \|_{L^2_y}^2 \\
  &\lesssim \rho^\sigma \bigl( \|w(\rho)\|_{L^2_y}^2 + \| (\partial_\rho w)(\rho) \|_{L^2_y}^2 \bigr).
 \end{align*}
 By the slow growth estimates~\eqref{equ:H1y_norm_w} and~\eqref{equ:L2_norm_partial_rho_w} it then follows that
 \begin{equation*}
  \|M(\rho)\|_{L^\infty_y} \lesssim \rho^{\sigma + 2 \delta} \varepsilon^2.
 \end{equation*}
 In particular, we have the bound
 \begin{equation*}
  \|M(1)\|_{L^\infty_y} \lesssim \varepsilon^2.
 \end{equation*}
 
\medskip 

\noindent {\it Term $II$.} By Sobolev embedding and the slow growth estimate~\eqref{equ:H1y_norm_w} we conclude that
\begin{align*}
 \bigl\| (P_{\leq 2^{10} \rho^\sigma} \calB) (P_{\leq \rho^\sigma} w)^4 \bigr\|_{L^\infty_y} &\lesssim \| \calB \|_{L^\infty_y} \|w(\rho)\|_{L^\infty_y}^4 \lesssim \| \calB \|_{L^\infty_y} \|w(\rho)\|_{H^1_y}^4 \lesssim \rho^{+4\delta} \varepsilon^4.
\end{align*}
Thus, we obtain that
\begin{align*}
 \sup_{1 \leq \rho \leq \tau} \, \frac{1}{\rho}  \bigl\| (P_{\leq 2^{10} \rho^\sigma} \calB) (P_{\leq \rho^\sigma} w)^4 \bigr\|_{L^\infty_y} \lesssim \sup_{1 \leq \rho \leq \tau} \, \rho^{-1 + 4\delta} \varepsilon^4 \lesssim \varepsilon^4.
\end{align*}

\medskip 

\noindent {\it Term $III$.} Using Young's inequality and~\eqref{equ:H1y_norm_w}, we have for any $\mu > 0$ that
\begin{align*}
 \sup_{1 \leq \rho \leq \tau} \, \frac{1}{\rho} \bigl\| \partial_\rho(P_{\leq \rho^\sigma} w) (P_{\sim \rho^\sigma}' w) \bigr\|_{L^\infty_y} &\lesssim \mu \sup_{1 \leq \rho \leq \tau} \, \bigl\| \partial_\rho(P_{\leq \rho^\sigma} w) \bigr\|_{L^\infty_y}^2 + \frac{1}{\mu} \sup_{1 \leq \rho \leq \tau} \, \rho^{-2} \| (P_{\sim \rho^\sigma}' w) \|_{L^\infty_y}^2 \\
  &\lesssim \mu \sup_{1 \leq \rho \leq \tau} \, \|M(\rho)\|_{L^\infty_y} + \frac{1}{\mu} \varepsilon^2.
\end{align*}
For sufficiently small $\mu > 0$, we will be able to absorb the first term into the left-hand side of the final estimate.

\medskip 

\noindent {\it Term $IV$.} First, by Young's inequality we find for any $\mu > 0$ that
\begin{align*}
 \int_1^\tau \bigl\| \partial_\rho (P_{\leq \rho^\sigma} w) \calR \bigr\|_{L^\infty_y} \, \ud \rho &\leq \Bigl( \sup_{1 \leq \rho \leq \tau} \bigl\| \partial_\rho (P_{\leq \rho^\sigma} w) \bigr\|_{L^\infty_y} \Bigr) \int_1^\tau \| \calR(\rho) \|_{L^\infty_y} \, \ud \rho \\
  &\leq \Bigl( \sup_{1 \leq \rho \leq \tau} \, \|M(\rho)\|_{L^\infty_y}^{\frac{1}{2}} \Bigr) \int_1^\tau \| \calR(\rho) \|_{L^\infty_y} \, \ud \rho \\
  &\leq \mu \sup_{1 \leq \rho \leq \tau} \, \|M(\rho)\|_{L^\infty_y} + \frac{1}{\mu} \biggl( \int_1^\tau \| \calR(\rho) \|_{L^\infty_y} \, \ud \rho \biggr)^2.
\end{align*}
We will absorb the first term into the left-hand side of the final estimate. For the remainder term $\calR$ it therefore suffices to show that
\begin{equation*}
 \int_1^R \|\calR(\rho)\|_{L^\infty_y(\bbR)} \, \ud \rho \lesssim \varepsilon.
\end{equation*}
Using the slow growth estimate~\eqref{equ:H1y_norm_w} and standard Littlewood-Paley estimates, this is straightforward to verify for the first four components in the definition~\eqref{equ:definition_R} of $\calR$. For the second to last component of~\eqref{equ:definition_R} we use~\eqref{equ:H1y_norm_w} and the identity 
\[
 w^3 - (P_{\leq \rho^\sigma} w)^3 = (P_{>\rho^\sigma} w) \bigl( w^2 + w (P_{\leq \rho^\sigma} w) + (P_{\leq \rho^\sigma} w)^2 \bigr) 
\]
to estimate 
\begin{align*}
 \int_1^R \frac{1}{\rho} \Bigl\| P_{\leq \rho^\sigma} \Bigl( \calB \bigl( w^3 - (P_{\leq \rho^\sigma} w)^3 \bigr) \Bigr) \Bigr\|_{L^\infty_y} \, \ud \rho &\lesssim \int_1^R \frac{1}{\rho} \| \calB \|_{L^\infty_y} \| P_{>\rho^\sigma} w \|_{L^\infty_y} \|w\|_{L^\infty_y}^2 \, \ud \rho \\
 &\lesssim \int_1^R \rho^{-1 - \frac{\sigma}{2}} \|w\|_{H^1_y}^3 \, \ud \rho \\
 &\lesssim \int_1^R \rho^{-1 - \frac{\sigma}{2} + 3 \delta} \varepsilon^3 \, \ud \rho \\
 &\lesssim \varepsilon^3.
\end{align*}
Finally, we decompose the last component of~\eqref{equ:definition_R} into
\begin{equation} \label{equ:last_component_definition_R_decomposed}
 \begin{aligned}
  P_{>\rho^\sigma} \Bigl( (P_{\leq 2^{10} \rho^\sigma} \calB) (P_{\leq \rho^\sigma} w)^3 \Bigr) &= P_{>\rho^\sigma} \Bigl( (P_{[2^{-10} \rho^\sigma, 2^{10} \rho^\sigma]} \calB) (P_{\leq \rho^\sigma} w)^3 \Bigr) + P_{>\rho^\sigma} \Bigl( (P_{\leq 2^{-10} \rho^\sigma} \calB) (P_{\leq \rho^\sigma} w)^3 \Bigr).
 \end{aligned}
\end{equation}
By frequency considerations the second term on the right-hand side can be written as
\begin{equation*}
 P_{>\rho^\sigma} \Bigl( (P_{\leq 2^{-10} \rho^\sigma} \calB) (P_{\leq \rho^\sigma} w)^3 \Bigr) = P_{>\rho^\sigma} \Bigl( (P_{\leq 2^{-10} \rho^\sigma} \calB) \bigl( (P_{\leq \rho^\sigma} w)^3 - (P_{\leq 2^{-10} \rho^\sigma} w)^3 \bigr) \Bigr)
\end{equation*}
and its contribution can therefore be estimated analogously to the contribution of the second to last component of~\eqref{equ:definition_R} above. The contribution of the first term on the right-hand side of~\eqref{equ:last_component_definition_R_decomposed} can be bounded using the slow growth estimate~\eqref{equ:H1y_norm_w} together with the following additional decay thanks to~\eqref{equ:Pk_beta_variable_Linfty} and~\eqref{equ:Pk_beta_const_Linfty}, 
\begin{align*}
 \| P_{[2^{-10} \rho^\sigma, 2^{10} \rho^\sigma]} \calB \|_{L^\infty_y} &\lesssim \sum_{ 2^k \sim \rho^\sigma } \Bigl\| P_k \Bigl( \frac{\beta( \rho \sinh(\cdot) )}{\cosh(\cdot)} \Bigr) \Bigr\|_{L^\infty_y} + \sum_{ 2^k \sim \rho^\sigma } \Bigl\| P_k \Bigl( \frac{\beta_0}{\cosh(\cdot)} \Bigr) \Bigr\|_{L^\infty_y} \lesssim \rho^{-1 + \sigma} + \rho^{-\sigma}.
\end{align*}

\medskip 

\noindent {\it Term $V$.} Using the slow growth estimate~\eqref{equ:H1y_norm_w} and the fact that
\begin{equation*}
 \| \partial_\rho (P_{\leq 2^{10} \rho^\sigma} \calB) \|_{L^\infty_y} \lesssim \rho^{-1} \| P_{\sim 2^{10} \rho^\sigma}' \calB \|_{L^\infty_y} + \| \partial_\rho \calB \|_{L^\infty_y} \lesssim \rho^{-1},
\end{equation*}
it is straightforward to verify that uniformly for all $1 \leq \tau \leq R$, 
\begin{equation*}
 \int_1^\tau \frac{1}{\rho} \bigl\| \partial_\rho (P_{\leq 2^{10} \rho^\sigma} \calB) (P_{\leq \rho^\sigma} w)^4 \bigr\|_{L^\infty_y} \, \ud \rho \lesssim \varepsilon^4.
\end{equation*}

\medskip 

\noindent {\it Term $VI$.} Similarly, using the slow growth estimate~\eqref{equ:H1y_norm_w} one easily obtains that uniformly for all $1 \leq \tau \leq R$, 
\begin{equation*}
 \int_1^\tau \frac{1}{\rho^2} \bigl\| (P_{\leq 2^{10} \rho^\sigma} \calB) (P_{\leq \rho^\sigma} w)^4 \bigr\|_{L^\infty_y} \, \ud \rho \lesssim \varepsilon^4.
\end{equation*}

\medskip 

\noindent {\it Term $VII$.} We begin by inserting the ODE~\eqref{equ:nlkg_w_low_freq_proj_rearranged} for $\partial_\rho^2 (P_{\leq \rho^\sigma} w)$. Then using the slow growth estimate~\eqref{equ:H1y_norm_w} for the $H^1_y$-norm of $w$ as well as the slow growth estimate~\eqref{equ:L2_norm_partial_rho_w} for the $L^2_y$-norm of $\partial_\rho w$, by Sobolev embedding and standard Littlewood-Paley estimates, we arrive at the overall estimate
\begin{equation*}
 \int_1^\tau \frac{1}{\rho} \bigl\| \partial_\rho^2 (P_{\leq \rho^\sigma} w) (P_{\sim \rho^\sigma}' w) \bigr\|_{L^\infty_y} \, \ud \rho \lesssim \varepsilon^2.
\end{equation*}
Here it is convenient to insert the commutator term $[P_{\leq \rho^\sigma}, \partial_\rho^2] w$ in the form 
\[
 [P_{\leq \rho^\sigma}, \partial_\rho^2] w = - (\partial_\rho^2 P_{\leq \rho^\sigma}) w - 2 (\partial_\rho P_{\leq \rho^\sigma}) \partial_\rho w = - \frac{1}{\rho^2} (P_{\sim \rho^\sigma}'' w) + \frac{2}{\rho} (P_{\sim \rho^\sigma}' (\partial_\rho w))
\]
and to invoke the slow growth estimate~\eqref{equ:L2_norm_partial_rho_w} for the $L^2_y$-norm of $\partial_\rho w$ to bound 
\[
 \|P_{\sim \rho^\sigma}' (\partial_\rho w)\|_{L^\infty_y} \lesssim \rho^{\frac{\sigma}{2}} \|\partial_\rho w\|_{L^2_y} \lesssim \rho^{\frac{\sigma}{2} + \delta} \varepsilon.
\]

\medskip 

\noindent {\it Term $VIII$.} Finally, it is straightforward to obtain a suitable estimate on the last term $VIII$ using Young's inequality and~\eqref{equ:H1y_norm_w}.

\medskip 

Putting all of the above estimates together, we obtain for any $1 \leq \tau \leq R$ and any $\mu > 0$ that
\begin{align*}
 \| M(\tau) \|_{L^\infty_y} \lesssim \varepsilon^2 + \mu \sup_{1 \leq \rho \leq \tau} \| M(\rho) \|_{L^\infty_y} + \frac{1}{\mu} \varepsilon^2.
\end{align*}
Hence, upon taking the supremum in $\tau$ over $1 \leq \tau \leq R$ and choosing $\mu > 0$ sufficiently small, we conclude
\begin{equation} \label{equ:M_uniform_bound}
 \sup_{1 \leq \tau \leq R} \, \|M(\tau)\|_{L^\infty_y} \lesssim \varepsilon^2,
\end{equation}
with an implicit constant independent of $C_0$ and $1 \leq \tau \leq R$. In particular, we arrive at the desired conclusion
\begin{equation} \label{equ:Linfty_low_freq_bound}
 \sup_{1 \leq \rho \leq R} \, \bigl\| (P_{\leq \rho^\sigma} w)(\rho, \cdot) \bigr\|_{L^\infty_y(\bbR)} \leq \frac{C_0}{2} \varepsilon,
\end{equation}
as long as $C_0$ is chosen sufficiently large initially. Combining this low-frequency estimate~\eqref{equ:Linfty_low_freq_bound} with the high-frequency estimate~\eqref{equ:Linfty_high_freq_bound} we obtain the desired bound
\[
 \sup_{1 \leq \rho \leq R} \, \sup_{y \in \bbR} \, |w(\rho, y)| \leq C_0 \varepsilon,
\]
which proves~\eqref{equ:interior_bootstrap_conclusion_hyperbolic_coord} and closes the bootstrap argument~\eqref{equ:interior_bootstrap_assumption_hyperbolic_coord}--\eqref{equ:interior_bootstrap_conclusion_hyperbolic_coord}.
\end{proof}

\begin{remark}
 We would like to point out that the contribution of the commutator term $[P_{\leq \rho^\sigma}, \partial_\rho^2] w$ in the proof of Proposition~\ref{prop:interior_decay} would be much easier to estimate if we had a slow growth estimate of the form $$\sup_{1 \leq \rho \leq R} \rho^{-\delta} \| (\partial_y \partial_\rho w)(\rho) \|_{L^2_y(\bbR)} \lesssim \varepsilon$$ at our disposal. However, it seems difficult to establish such a bound since the energy functional $E_{int, \rho}$ only provides good control of a weighted $L^2_y$-norm of derivatives in $\rho$ with an unfavorable weight.
\end{remark}

\begin{remark} \label{rem:convergence_L2}
 The proof of Proposition~\ref{prop:interior_decay} establishes something more than the interior decay estimate~\eqref{equ:interior_decay_estimate}. Namely, that there exists a function $b \in L^\infty_y(\bbR)$ such that for every fixed $y \in \bbR$ the following limit exists
 \begin{equation} \label{equ:definition_amplitude_b}
  b(y) := \lim_{\rho \to \infty} \, \Bigl( \bigl( \partial_\rho (P_{\leq \rho^\sigma} w)(\rho, y) \bigr)^2 + \bigl( (P_{\leq \rho^\sigma} w)(\rho, y) \bigr)^2 \Bigr)^{\frac{1}{2}}.
 \end{equation}
 Moreover, a small modification of the proof of Proposition~\ref{prop:interior_decay} shows that the limit \eqref{equ:definition_amplitude_b} exists in $L^2_y(\bbR)$ as well. Hence, we have that $b \in L^\infty_y(\bbR) \cap L^2_y(\bbR)$. 
\end{remark}

\begin{remark} \label{rem:amplitude_limit}
 The definition of the function $b(y)$ in~\eqref{equ:definition_amplitude_b} is independent of the frequency cut-off $P_{\leq \rho^\sigma}$. In fact, we have for every fixed $y \in \bbR$ that 
 \begin{equation*}
  b(y) = \lim_{\rho \to \infty} \Bigl( (\partial_\rho w)(\rho, y)^2 + w(\rho, y)^2 \Bigr)^{\frac{1}{2}}.
 \end{equation*}
\end{remark}
\begin{proof}
 In view of the definition~\eqref{equ:definition_amplitude_b} of $b(y)$, it suffices to prove that for every fixed $y \in \bbR$ we have
 \[
  \lim_{\rho \to \infty} \Bigl( \bigl| \partial_\rho (P_{>\rho^\sigma} w)(\rho, y) \bigr| + \bigl| (P_{> \rho^\sigma} w)(\rho, y) \bigr| \Bigr) = 0.
 \]
 This is immediate for the high-frequency component $P_{>\rho^\sigma} w$ of the variable $w$ by Bernstein estimates and~\eqref{equ:H1y_norm_w}. The conclusion for $\partial_\rho (P_{>\rho^\sigma} w)$ requires a more careful argument though when the derivative in $\rho$ falls on $w$. In that case we exploit that for every fixed $y \in \bbR$ with $|y| \leq L$, we may bound
 \[
  \bigl| (P_{> \rho^\sigma} \partial_\rho w)(\rho, y) \bigr| \lesssim_{L} \bigl\| \cosh^{-1}(\cdot) (P_{> \rho^\sigma} \partial_\rho w)(\rho, \cdot) \bigr\|_{L^\infty_y}
 \]
 with an implicit constant that of course depends on the size of $y$. Using the following standard commutator estimate for Littlewood-Paley projections,
 \begin{equation*}
  \bigl\| [ f, P_k ] \bigr\|_{L^2_y \to L^\infty_y} \lesssim 2^{-\frac{1}{2} k} \| f' \|_{L^\infty_y}, \quad k \in \bbZ,
 \end{equation*}
 we obtain by the slow growth estimates~\eqref{equ:L2_norm_partial_rho_w} and~\eqref{equ:H1y_cosh_norm_partial_rho_w} from Corollary~\ref{cor:interior_growth_estimates_hyperbolic_coordinates} that
 \begin{align*}
  &\bigl\| \cosh^{-1}(\cdot) (P_{> \rho^\sigma} \partial_\rho w)(\rho, \cdot) \bigr\|_{L^\infty_y} \\
  &\quad \lesssim \bigl\| P_{> \rho^\sigma} \bigl( \cosh^{-1}(\cdot) (\partial_\rho w)(\rho, \cdot) \bigr) \bigr\|_{L^\infty_y} + \sum_{k = \lfloor \log_2(\rho^\sigma) \rfloor}^\infty \bigl\| [ \cosh^{-1}(\cdot), P_{k} ] (\partial_\rho w)(\rho, \cdot) \bigr\|_{L^\infty_y} \\
  &\quad \lesssim \rho^{-\frac{\sigma}{2}} \bigl\| \cosh^{-1}(\cdot) (\partial_\rho w)(\rho, \cdot) \bigr\|_{H^1_y} + \sum_{k = \lfloor \log_2(\rho^\sigma) \rfloor}^\infty 2^{-\frac{1}{2} k} \| (\partial_\rho w)(\rho, \cdot) \|_{L^2_y} \\
  &\quad \lesssim \rho^{-\frac{\sigma}{2}} \Bigl( \bigl\| \cosh^{-1}(\cdot) (\partial_\rho w)(\rho, \cdot) \bigr\|_{H^1_y} + \| (\partial_\rho w)(\rho, \cdot) \|_{L^2_y} \Bigr) \\
  &\quad \lesssim \rho^{-\frac{\sigma}{2}} \rho^{+ \delta} \varepsilon,
 \end{align*}
 which vanishes as $\rho \to \infty$. This completes the proof.
\end{proof}

\section{Asymptotic Behavior} \label{sec:asymptotics}

In this final section we determine the asymptotic behavior of solutions to~\eqref{equ:nlkg} in the interior region.

\begin{proof}[Proof of Corollary~\ref{cor:asymptotics}]
The asymptotic behavior of $w(\rho, y) = (P_{\leq \rho^\sigma} w)(\rho, y) + (P_{> \rho^\sigma} w)(\rho, y)$ as $\rho \to \infty$ is determined by the behavior of the low-frequency component $P_{\leq \rho^\sigma} w$, because the high-frequency component $P_{> \rho^\sigma} w$ enjoys the additional decay
\[
 \bigl\| P_{> \rho^\sigma} w \bigr\|_{L^\infty_y} \lesssim \rho^{-\frac{\sigma}{2}} \| \partial_y w \|_{L^2_y} \lesssim \rho^{-\frac{\sigma}{2} + \delta} \varepsilon \to 0 \quad \text{ as } \rho \to \infty.
\]

We recall from~\eqref{equ:nlkg_w_low_freq_proj_rearranged} that the low-frequency component $P_{\leq \rho^\sigma} w$ satisfies the equation
\begin{align*}
 \bigl( \partial_\rho^2 + 1 \bigr) (P_{\leq \rho^\sigma} w) &= \frac{1}{\rho} (P_{\leq 2^{10} \rho^\sigma} \calB) (P_{\leq \rho^\sigma} w)^3 - [ P_{\leq \rho^\sigma}, \partial_\rho^2 ] w + \calR,
\end{align*}
where $\calB$ and $\calR$ are defined as in~\eqref{equ:definition_B}, respectively as in~\eqref{equ:definition_R}. In order to uncover the asymptotic behavior of the low-frequency component $P_{\leq \rho^\sigma} w$, we need to take into account the oscillatory behavior of the solution. To this end we introduce the variables
\begin{equation}
 W_{\pm} := e^{\mp i \rho} \bigl( \partial_\rho( P_{\leq \rho^\sigma} w) \pm i (P_{\leq \rho^\sigma} w) \bigr).
\end{equation}
Then we have
\begin{equation*}
 P_{\leq \rho^\sigma} w = \frac{1}{2i} \bigl( e^{+i\rho} W_{+} - e^{-i\rho} W_{-} \bigr) = \pm \Im \, \bigl( e^{\pm i \rho} W_{\pm} \bigr),
\end{equation*}
and it holds that
\begin{align*}
 W_{-} = \overline{W_{+}}, \quad \quad |W_{+}|^2 = |W_{-}|^2 = W_{-} W_{+} = (\partial_\rho (P_{\leq \rho^\sigma} w))^2 + (P_{\leq \rho^\sigma} w)^2.
\end{align*}
The proof of Proposition~\ref{prop:interior_decay}, specifically the estimate~\eqref{equ:M_uniform_bound}, implies that $|W_{\pm}|$ is uniformly bounded. Moreover, by Remark~\ref{rem:convergence_L2} there exists a non-negative function $b \in L^\infty_y(\bbR) \cap L^2_y(\bbR)$ such that for every fixed $y \in \bbR$ there holds
\begin{equation*}
 b(y)^2 = \lim_{\rho \to \infty} \, |W_{\pm}(\rho, y)|^2.
\end{equation*}

\medskip

Next, a computation shows that $W_{\pm}$ satisfies
\begin{equation} \label{equ:partial_rho_W_pm}
 \partial_\rho W_{\pm} = e^{\mp i \rho} (\partial_\rho^2 + 1) (P_{\leq \rho^\sigma} w) = e^{\mp i \rho}  \frac{1}{\rho} (P_{\leq 2^{10} \rho^\sigma} \calB) (P_{\leq \rho^\sigma} w)^3 - e^{\mp i \rho} [ P_{\leq \rho^\sigma}, \partial_\rho^2 ] w + e^{\mp i \rho} \calR.
\end{equation}
Since $W_{-} = \overline{W_{+}}$, in what follows we only consider the equation for $\partial_\rho W_{+}$. Inserting the expansion
\begin{align*}
 (P_{\leq \rho^\sigma} w)^3 &= -\frac{1}{8i} \bigl( e^{+i\rho} W_{+} - e^{-i\rho} W_{-} \bigr)^3 \\
 &= -\frac{1}{8i} \bigl( e^{+3i\rho} W_{+}^3 - 3 e^{i \rho} (W_+)^2 W_{-} + 3 e^{-i\rho} W_{+} (W_{-})^2 - e^{-3i\rho} (W_{-})^3 \bigr)
\end{align*}
into~\eqref{equ:partial_rho_W_pm}, we find that
\begin{equation} \label{equ:partial_rho_W_plus}
 \begin{aligned}
  \partial_\rho W_{+} &=  - i \frac{3}{8} \frac{1}{\rho} (P_{\leq 2^{10} \rho^\sigma} \calB) |W_{+}|^2 W_{+} \\
  &\qquad + \frac{i}{8} \frac{1}{\rho} e^{-i\rho} (P_{\leq 2^{10} \rho^\sigma} \calB) \bigl( e^{+3i\rho} W_{+}^3 + 3 e^{-i\rho} W_{+} (W_{-})^2 - e^{-3i\rho} (W_{-})^3 \bigr) \\
  &\qquad  - e^{- i \rho} [ P_{\leq \rho^\sigma}, \partial_\rho^2 ] w + e^{- i \rho} \calR.
 \end{aligned}
\end{equation}
Then the second term on the right-hand side of~\eqref{equ:partial_rho_W_plus} is better behaved asymptotically due to the oscillating phase factors, while the third and fourth terms are remainder terms with better decay properties. However, the first term on the right-hand side contains a resonant part, which we carefully peel off now. We write
\begin{align*}
 P_{\leq 2^{10} \rho^\sigma} \calB &= P_{\leq 2^{10} \rho^\sigma} \biggl( \frac{\beta_0 + \beta(\rho \sinh(\cdot))}{\cosh(\cdot)} \biggr) = \frac{\beta_0}{\cosh(y)} - P_{>2^{10} \rho^\sigma} \biggl( \frac{\beta_0}{\cosh(\cdot)} \biggr) + P_{\leq 2^{10} \rho^\sigma} \biggl( \frac{\beta(\rho \sinh(\cdot))}{\cosh(\cdot)} \biggr)
\end{align*}
and note that the last two terms have additional decay in $\rho$. Indeed, by~\eqref{equ:Pk_beta_const_Linfty} it holds that
\begin{equation*}
 \biggl\| P_{>2^{10} \rho^\sigma} \biggl( \frac{\beta_0}{\cosh(\cdot)} \biggr) \biggr\|_{L^\infty_y} \lesssim \sum_{k \gtrsim \log_2(\rho^\sigma)} \biggl\| P_k \biggl( \frac{\beta_0}{\cosh(\cdot)} \biggr) \biggr\|_{L^\infty_y} \lesssim \sum_{k \gtrsim \log_2(\rho^\sigma)} 2^{-k} \lesssim \rho^{-\sigma},
\end{equation*}
and in view of~\eqref{equ:Pk_beta_variable_Linfty} we have 
\begin{equation*}
 \biggl\| P_{\leq 2^{10} \rho^\sigma} \biggl( \frac{\beta(\rho \sinh(\cdot))}{\cosh(\cdot)} \biggr) \biggr\|_{L^\infty_y} \lesssim \sum_{k \lesssim \log_2(\rho^\sigma)} \biggl\| P_k \biggl( \frac{\beta(\rho \sinh(\cdot))}{\cosh(\cdot)} \biggr) \biggr\|_{L^\infty_y} \lesssim \sum_{k \lesssim \log_2(\rho^\sigma)} \frac{2^k}{\rho} \lesssim \rho^{-1+\sigma}.
\end{equation*}

Returning to the equation for $\partial_\rho W_{+}$, we may therefore write
\begin{equation*}
 \partial_\rho W_{+} = -i \frac{3 \beta_0}{8} \frac{1}{\rho \cosh(y)} |W_{+}|^2 W_{+} + \partial_\rho S_{+} + T_{+},
\end{equation*}
or equivalently,
\begin{equation} \label{equ:partial_rho_W_minus_S}
 \partial_\rho \bigl( W_{+} - S_{+} \bigr) = -i  \frac{3 \beta_0}{8} \frac{1}{\rho \cosh(y)} |W_{+}|^2 W_{+} + T_{+},
\end{equation}
where
\begin{align*}
 S_{+} &:= \frac{1}{8 \rho} (P_{\leq 2^{10} \rho^\sigma} \calB) \Bigl( \frac{1}{2} e^{+2i\rho} W_+^3 - \frac{3}{2} e^{-2i\rho} W_+ (W_-)^2 + \frac{1}{4} e^{-4i\rho} (W_-)^3 \Bigr) - \frac{2}{\rho}  e^{-i\rho} (P_{\sim \rho^\sigma}' w)
\end{align*}
satisfies $|S_+| \lesssim \frac{1}{\rho}$ and $T_+$ consists of all other (integrable) remainder terms. In view of the above observations, the proof of Proposition~\ref{prop:interior_decay}, and Remark~\ref{rem:convergence_L2}, there exists a small constant $0 < \nu \ll 1$ such that we have 
\begin{equation*}
 |T_+| \lesssim \rho^{-1-\nu}, \quad \bigl| b^2 - |W_+|^2 \bigr| = \calO( \rho^{-\nu} ).
\end{equation*}
Then multiplying~\eqref{equ:partial_rho_W_minus_S} by the integrating factor
\begin{align*}
 \exp \Bigl( + i \frac{3 \beta_0}{8} \frac{b(y)^2}{\cosh(y)} \log(\rho) \Bigr), 
\end{align*}
we may conclude that 
\begin{align*}
 &\partial_\rho \biggl( (W_+ - S_+) \exp \Bigl( + i \frac{3 \beta_0}{8} \frac{b(y)^2}{\cosh(y)} \log(\rho) \Bigr) \biggr) \\
 &\quad \quad = \biggl( + i \frac{3}{8} \frac{\beta_0}{\rho \cosh(y)} \bigl( b(y)^2 - |W_+|^2 \bigr) W_+ + T_+ \biggr) \exp \Bigl( + i \frac{3 \beta_0}{8} \frac{b(y)^2}{\cosh(y)} \log(\rho) \Bigr) \\
 &\quad \quad \equiv \calO(\rho^{-1-\nu}).
\end{align*}
Hence, for every fixed $y \in \bbR$ the following limit exists
\begin{equation*}
 a(y) := \lim_{\rho \to \infty} \, W_{+}(\rho, y) \exp \Bigl( + i \frac{3 \beta_0}{8} \frac{b(y)^2}{\cosh(y)} \log(\rho) \Bigr)
\end{equation*}
and it is clear that 
\begin{equation*}
 |a(y)| = b(y).
\end{equation*}
Thus, we have
\begin{equation*}
 \Bigl| a(y) - W_{+}(\rho, y) \exp \Bigl( + i \frac{3 \beta_0}{8} \frac{b(y)^2}{\cosh(y)} \log(\rho) \Bigr) \Bigr| = \calO(\rho^{-\nu}).
\end{equation*}
It follows that
\begin{align*}
 w(\rho, y) &= (P_{\leq \rho^\sigma} w)(\rho, y) + (P_{> \rho^\sigma} w)(\rho, y) \\
 &= \Im \bigl( e^{i\rho} W_{+}(\rho, y) \bigr) + (P_{> \rho^\sigma} w)(\rho, y) \\
 &= \Im \Bigl( e^{i ( \rho - \frac{3}{8} \beta_0 \frac{|a(y)|^2}{\cosh(y)} \log(\rho) )} a(y) \Bigr) + \calO(\rho^{-\nu}),
\end{align*}
which gives the asymptotic behavior~\eqref{equ:main_asymptotics} for the solution to~\eqref{equ:nlkg} in the interior region.
\end{proof}

\bibliographystyle{amsplain}
\bibliography{references}

\end{document}